\pgfplotsset{compat=newest}
\pgfplotsset{plot coordinates/math parser=false}
\newlength\figureheight
\newlength\figurewidth
\newcommand\numberthis{\addtocounter{equation}{1}\tag{\theequation}}
\newcommand\BibTeX{{\rmfamily B\kern-.05em \textsc{i\kern-.025em b}\kern-.08em
T\kern-.1667em\lower.7ex\hbox{E}\kern-.125emX}}
\newtheorem{approximaterate}{Approximation}
\title{Multilevel convergence analysis of multigrid-reduction-in-time\protect\thanks{This work performed under the auspices of the U.S. Department of Energy by Lawrence Livermore National Laboratory under Contract
DE-AC52-07NA27344, LLNL-JRNL-763460.}}
\author{
Andreas Hessenthaler\thanks{Institute for Modelling and Simulation of Biomechanical Systems, University of Stuttgart,
Pfaffenwaldring 5a, 70565 Stuttgart, Germany (\email{hessenthaler@mechbau.uni-stuttgart.de})}
\and Ben S.\ Southworth\thanks{Department of Applied Mathematics, University of Colorado at Boulder, CO, USA}
\and David Nordsletten\thanks{Division of Imaging Sciences and Biomedical Engineering,
King's College London, 4th Floor, Lambeth Wing, St.~Thomas Hospital, London, SE1 7EH, UK}
\and Oliver R\"ohrle\footnotemark[2]
\and Robert D.\ Falgout\thanks{Center for Applied Scientific Computing, Lawrence Livermore National Laboratory,
P.O.\ Box 808, L-561, Livermore, CA 94551, USA}
\and Jacob B.\ Schroder\thanks{Department of Mathematics and Statistics, University of New Mexico,
310 SMLC, Albuquerque, NM 87131, USA}
}
\begin{document}
\allowdisplaybreaks
\maketitle

\begin{abstract}
    This paper presents a multilevel convergence framework for multigrid-reduction-in-time (MGRIT)
    as a generalization of previous two-grid estimates.
    The framework provides \emph{a priori} upper bounds on the convergence of MGRIT V- and F-cycles,
    with different relaxation schemes, by deriving the respective residual and error propagation operators.
    The residual and error operators are functions of the time stepping operator,
    analyzed directly and bounded in norm, both numerically and analytically.
    We present various upper bounds of different computational cost
    and varying sharpness.
    These upper bounds are complemented by proposing analytic formulae for the approximate convergence factor
    of V-cycle algorithms that take the number of fine grid time points, the temporal coarsening factors,
    and the eigenvalues of the time stepping operator as parameters.

    The paper concludes with supporting numerical investigations of parabolic (anisotropic diffusion)
    and hyperbolic (wave equation) model problems.
    We assess the sharpness of the bounds and the quality of the approximate convergence factors.
    Observations from these numerical investigations demonstrate the value of the proposed multilevel
    convergence framework for estimating MGRIT convergence \emph{a priori} and for the design
    of a convergent algorithm.
    We further highlight that observations in the literature are captured by the theory, including
    that two-level Parareal and multilevel MGRIT with F-relaxation do not yield scalable algorithms
    and the benefit of a stronger relaxation scheme. An important observation is that
    with increasing numbers of levels
    MGRIT convergence deteriorates for the hyperbolic model problem, while constant convergence factors
    can be achieved for the diffusion equation.
    The theory also indicates that L-stable Runge-Kutta schemes are more amendable to multilevel
    parallel-in-time integration with MGRIT than A-stable Runge-Kutta schemes.
\end{abstract}

\begin{keywords}
    multilevel convergence theory,
    multigrid-reduction-in-time (MGRIT),
    parallel-in-time,
    multigrid,
    analytic upper bounds,
    a priori estimates
\end{keywords}

\section{Introduction}\label{introduction-sec}
Modern computer architectures enable massively parallel computations for systems under numerical investigation.
While clock rates of recent high-performance computing architectures have largely become stagnant, increased concurrency
continues to reduce the time-to-solution, allowing for increased complexity of computational models
and accuracy of computed quantities.

Spatial domain decomposition (DD) methods are a wide-spread class of parallelization techniques
to exploit parallelism in numerical simulations.
Many DD methods are straightforward to implement and scalable in parallel up to the point that communication tasks become
dominant over computation tasks. Thus, spatial parallelism may saturate without exploiting the full potential
of the available hardware.

Parallel-in-time methods \cite{Nievergelt1964,Gander2015} increase the amount of parallelism
that can be exploited by introducing parallelism in the temporal domain.
Many such methods exist, including
waveform relaxation \cite{LubichOstermann1987,VandewalleVandevelde1994},
space-time multigrid \cite{HortonVandewalle1995},
parallel implicit time-integrator \cite{FarhatChandesris2003,FarhatCortial2006},
revisionist integral deferred correction \cite{ChristliebMacdonaldOng2010},
spectral deferred correction (SDC) \cite{SpeckRuprechtKrauseEmmettMinionWinkelGibbon2012,EmmettMinion2012,HamonSchreiberMinion2019},
Parareal \cite{LionsMadayTurinici2001}
and multigrid-reduction-in-time \cite{FriedhoffFalgoutKolevMaclachlanSchroder2012,
FalgoutFriedhoffKolevMaclachlanSchroder2014}.
These methods have been developed for various application areas and with varying degree of intrusiveness,
ease of implementation, level of parallelism, and potential for speedup.
For an extensive review, see \cite{Gander2015}.

In this paper, we focus on multigrid-reduction-in-time (MGRIT), a recently developed
iterative, multilevel algorithm, which introduces parallelism in the temporal
domain by employing a parallel, iterative coarse-grid correction in time based on multigrid reduction.
MGRIT has been explored for various application areas, including the numerical solution of parabolic and hyperbolic partial differential
equations (PDEs) \cite{FriedhoffFalgoutKolevMaclachlanSchroder2012,FalgoutManteuffelOneillSchroder2017,
HowseDesterckFalgoutMaclachlanSchroder2019,HessenthalerNordslettenRoehrleSchroderFalgout2018},
investigations of power systems \cite{LecouvezFalgoutWoodwardTop2016,SchroderFalgoutWoodwardTopLecouvez2018},
solving adjoint and optimization problems \cite{GuentherGaugerSchroder2018a,GuentherGaugerSchroder2018b},
and neural network training \cite{Schroder2017}.
Two-level convergence theory for MGRIT was developed in \cite{DobrevKolevPeterssonSchroder2017} for time
integration on a uniform time grid, under the assumption of linear, simultaneously diagonalizable time-stepping operators
(see Section \ref{sec:MGRIT:diag}). The derived \emph{a priori} bounds
were shown to be quite accurate when compared
with convergence observed in practice. Southworth \cite{Southworth2018} generalized this framework, deriving necessary
and sufficient conditions and tight two-level convergence bounds for general two-level MGRIT for linear PDEs on
a uniform time grid. Some extensions to the case of non-uniform time grids are also provided in
\cite{Southworth2018,YueShuXuBuPan2018}.
In a special two-level case, MGRIT and Parareal are equivalent and within this setting, convergence theory
was developed for the linear and nonlinear case \cite{GanderVandewalle2007,GanderHairer2008}.
However, no work has been done on convergence
theory for the general multilevel setting, which is often far superior in practice.
The selection of an appropriate cycling strategy and relaxation
scheme is fundamental to achieve scalable multilevel performance and, ultimately, for achieving parallel speedup.
A theoretical framework for multilevel convergence of MGRIT can help guide these decisions in a rigorous,
a priori manner.

This paper introduces a framework for multilevel convergence analysis of MGRIT,
laying the groundwork for a better understanding of MGRIT in theory and in practice.
While the convergence framework is developed for linear PDEs,
we note that MGRIT employs full approximation storage (FAS) multigrid
(similar to methods like multilevel SDC,
e.g., \cite{EmmettMinion2012,HamonSchreiberMinion2019})
and is thus applicable to the general nonlinear case.
It was shown in \cite{DobrevKolevPeterssonSchroder2017}, however,
that the investigation of linear PDEs can illustrate the strenghts and weaknesses
of two-level MGRIT. Similarly, the new multilevel convergence framework
in this work highlights how the use of a stronger relaxation scheme
and carefully selected time integration schemes for the diffusion equation
can benefit MGRIT convergence significantly.
On the other hand, it highlights problematic areas for MGRIT, for example,
if A-stable Runge-Kutta schemes are used for the parallel-in-time integration
of the second-order wave equation.
Thus, we expect the analysis presented here to help guide
the development and improvement of MGRIT through ideas such as
coarsening in integration order as opposed to step size ($p$-MGRIT;
similar to coarsening in collocation order for multilevel SDC \cite{SpeckRuprechtEmmettMinionBoltenKrause2015}).
Furthermore, we provide a parallel C++ implementation of all derived bounds and approximate convergence
factors in the Supplementary Materials and
as open-source software\footnote{Github repository: \url{github.com/XBraid/XBraid-convergence-est}}
to guide parameter choices for a particular application in an \emph{a priori}
manner.

The paper proceeds as follows. In Section \ref{MGRIT-sec}, notation for a general linear time-stepping problem
is introduced, and in Section \ref{MGRIT-operators-sec}, the MGRIT algorithm and operators are reviewed.
The important assumption of simultaneously diagonalizable operators is discussed in Section \ref{sec:MGRIT:diag}
and the connection between two-level and multilevel convergence is discussed in Section \ref{why-multilevel-is-harder-sec}.
In Section \ref{residual-and-error-propagation-sec} and \ref{bounds-MGRIT-error-propagation-sec},
we first generalize previous two-level convergence theory of MGRIT
to the case with an arbitrary number of relaxation steps, referred to as $r$FCF-relaxation.
We then extend the convergence framework to the multilevel setting, presenting analytic formulae to
approximate the worst-case convergence factor of MGRIT algorithms, for multiple MGRIT cycling strategies
and types of relaxation. The upper bounds on residual and error propagation derived here
are able to analyze multilevel performance of MGRIT \emph{a priori}, both numerically and analytically,
and with varying degree of sharpness and computational cost.
Section \ref{numerical-results-sec} demonstrates the sharpness of the derived theoretical bounds for model
parabolic (including new analysis of the anisotropic diffusion equation) and hyperbolic PDEs,
highlighting the benefits of the theory derived in this work.
\section{Multigrid-reduction-in-time (MGRIT)}\label{MGRIT-sec}
For linear problems, sequential time stepping based on a single-step integration operator $\Phi$ can be written as
\begin{equation}
    \mathbf{u}_n = \Phi_n \mathbf{u}_{n-1} + \mathbf{g}_n, \qquad \text{for } n = 1, \ldots, N_t - 1,
    \label{one-step-eqn}
\end{equation}
with state vector $\mathbf{u}_n \in \mathbb{R}^{N_x}$ at time $t_n \in (0, T]$,
initial condition $\mathbf{u}_0$ at time $t_0 = 0$, and forcing function $\mathbf{g}_n$.
Here, $N_x$ refers to the number of degrees of freedom at one point in time,
and $N_t$ refers to the number of time points.\footnote{Note that in contrast to
\cite{DobrevKolevPeterssonSchroder2017}, we include the initial time point.}
For the theoretical analysis, we consider equidistant time points, $\delta_{t_n} = t_n - t_{n-1} = \delta_t$,
and a time-independent one-step integrator, $\Phi_n = \Phi$, for all $n$.\footnote{Multistep time integration
schemes can be addressed in a similar way; see \cite{FalgoutLecouvezWoodward2017}.}

In matrix form, \eqref{one-step-eqn} can be written as,
\begin{equation}
    A \mathbf{u} =
    \begin{bmatrix}
        I \\
        -\Phi & I \\
              & -\Phi & I \\
              &       & \ddots & \ddots
    \end{bmatrix} \begin{bmatrix}
        \mathbf{u}_0 \\
        \mathbf{u}_1 \\
        \mathbf{u}_2 \\
        \vdots
    \end{bmatrix}
    = \mathbf{g},
    \label{space-time-system-eqn}
\end{equation}
where sequential time-stepping is identified as a block-forward solve of \eqref{space-time-system-eqn}.

Multigrid-reduction-in-time (MGRIT)
\cite{FriedhoffFalgoutKolevMaclachlanSchroder2012,FalgoutFriedhoffKolevMaclachlanSchroder2014}
solves \eqref{space-time-system-eqn} iteratively
and, like sequential time-stepping, is an $O(N_t)$ method, that is, the total number of (block) operations
to solution is linear or near-linear with the number of time steps (assuming MGRIT is applicable/convergent).
MGRIT introduces a multilevel hierarchy of $n_\ell$ time grids of varying step size
to achieve parallelism in the temporal domain, employing a coarse-grid correction based on
multigrid reduction.
The fine grid (referred to as level $\ell = 0$) is composed of all time points
$t_n$ ($n = 0, \ldots, N_t - 1$)
and the coarser grids (referred to as levels $\ell = 1, \ldots, n_\ell -1$)
are derived from a uniform coarsening of the fine grid (see Figure~\ref{time-grid-hierarchy-fig}).
The temporal coarsening factors are denoted as $m_\ell \in \mathbb{N}$
(for $\ell = 0, \ldots, n_\ell - 2$),\footnote{Note that $m_\ell = 1$ for some or all $\ell$
is a valid choice, e.g., for a p-multigrid-like approach.}
such that the number of time points on each grid level is given by
\begin{equation}
    N_\ell = \frac{N_{\ell - 1} - 1}{m_{\ell - 1}} + 1, \qquad \text{for } \ell = 1, \ldots, n_\ell - 1,
    \label{Nl-eqn}
\end{equation}
with corresponding time step size $\delta_{t,\ell}$.
On each  grid level $\ell$, time points are partitioned into F-points (black) and C-points (red),
and the C-points on level $\ell$ compose all points on the next coarser grid level, $\ell + 1$.
\begin{figure}[ht!]
    \centering
    \begin{tikzpicture}[scale=0.8]
        \def\gf{1.75};
        \def\gc{0};
        \def\nx{16}
        \def\df{0.75}
        \def\m{4.0}
        \def\dc{\df * \m}
        \def\add{0.1*\df}
                \draw[thick,black] (0,\gf) -- (\nx*\df,\gf);
        \draw[thick,red] (0,\gc) -- (\m*\dc,\gc);
        \draw[right,black] (\nx*\df+2*\df,\gf) node {level $0$};
        \draw[right,red] (\nx*\df+2*\df,\gc) node {level $1$};
                        \node at (0*\dc,\gf) [rectangle, draw, ultra thick, red, fill=white, minimum width=0.025cm, minimum height=0.4cm] {};
        \node at (1*\dc,\gf) [rectangle, draw, ultra thick, red, fill=white, minimum width=0.025cm, minimum height=0.4cm] {};
        \node at (2*\dc,\gf) [rectangle, draw, ultra thick, red, fill=white, minimum width=0.025cm, minimum height=0.4cm] {};
        \node at (3*\dc,\gf) [rectangle, draw, ultra thick, red, fill=white, minimum width=0.025cm, minimum height=0.4cm] {};
        \node at (4*\dc,\gf) [rectangle, draw, ultra thick, red, fill=white, minimum width=0.025cm, minimum height=0.4cm] {};
        \node at (0*\dc,\gc) [rectangle, draw, ultra thick, red, fill=white, minimum width=0.025cm, minimum height=0.4cm] {};
        \node at (1*\dc,\gc) [rectangle, draw, ultra thick, red, fill=white, minimum width=0.025cm, minimum height=0.4cm] {};
        \node at (2*\dc,\gc) [rectangle, draw, ultra thick, red, fill=white, minimum width=0.025cm, minimum height=0.4cm] {};
        \node at (3*\dc,\gc) [rectangle, draw, ultra thick, red, fill=white, minimum width=0.025cm, minimum height=0.4cm] {};
        \node at (4*\dc,\gc) [rectangle, draw, ultra thick, red, fill=white, minimum width=0.025cm, minimum height=0.4cm] {};
                                                                                                        \draw[thick,black] (1*\df,\gf+-0.15) -- (1*\df,\gf+0.15);
        \draw[thick,black] (2*\df,\gf+-0.15) -- (2*\df,\gf+0.15);
        \draw[thick,black] (3*\df,\gf+-0.15) -- (3*\df,\gf+0.15);
        \draw[thick,black] (5*\df,\gf+-0.15) -- (5*\df,\gf+0.15);
        \draw[thick,black] (6*\df,\gf+-0.15) -- (6*\df,\gf+0.15);
        \draw[thick,black] (7*\df,\gf+-0.15) -- (7*\df,\gf+0.15);
        \draw[thick,black] (9*\df,\gf+-0.15) -- (9*\df,\gf+0.15);
        \draw[thick,black] (10*\df,\gf+-0.15) -- (10*\df,\gf+0.15);
        \draw[thick,black] (11*\df,\gf+-0.15) -- (11*\df,\gf+0.15);
        \draw[thick,black] (13*\df,\gf+-0.15) -- (13*\df,\gf+0.15);
        \draw[thick,black] (14*\df,\gf+-0.15) -- (14*\df,\gf+0.15);
        \draw[thick,black] (15*\df,\gf+-0.15) -- (15*\df,\gf+0.15);
                \draw[above,black] (0*\df,\gf+0.5) node {$t_0$};
        \draw[above,black] (1*\df,\gf+0.5) node {$t_1$};
        \draw[above,black] (16*\df,\gf+0.5) node {$t_{N_0 - 1}$};
        \draw[above,black] (2*\df,\gf+0.5) node {$\ldots$};
                \draw[above,red] (0*\dc,\gc+0.5) node {$t_0$};
        \draw[above,red] (4*\dc,\gc+0.5) node {$t_{N_1 - 1}$};
        \draw[above,red] (1*\dc,\gc+0.5) node {$\ldots$};
                \draw[<->,thick,black,below] (2*\dc-1*\df,\gf-0.5) -- (2*\dc-2*\df,\gf-0.5);
        \draw[below,black] (2*\dc-1.5*\df,\gf-0.5) node {$\delta_0$};
        \draw[<->,ultra thick,red,below] (1*\dc,\gc-0.5) -- (0*\dc,\gc-0.5);
        \draw[below,red] (0.5*\dc,\gc-0.5) node {$\delta_1 = m_0 \cdot \delta_0$};
                \draw[<-,ultra thick,black,below,bend right] (3*\dc-1*\df,\gf-0.5) -- (3*\dc-1.5*\df,\gf-0.75) -- (3*\dc-2*\df,\gf-0.5);
        \draw[below,black] (3*\dc-0.5*\df,\gf-0.5) node {$\Phi_0$};
        \draw[<-,ultra thick,red,below,bend right] (3*\dc,\gc-0.5) -- (2.5*\dc,\gc-0.75) -- (2*\dc,\gc-0.5);
        \draw[below,red] (3*\dc-0.5*\df,\gc-0.5) node {$\Phi_1$};
        \end{tikzpicture}
        \caption{Two-grid hierarchy: time points $t_n$, fine-/coarse-grid step sizes $\delta_0$
            and $\delta_1$, and coarsening factor $m_0 = 4$. On level $0$, F-points are denoted as vertical lines
            and C-points are denoted as squares.}
    \label{time-grid-hierarchy-fig}
\end{figure}
\subsection{MGRIT Operators}\label{MGRIT-operators-sec}
MGRIT approximates the exact coarse-grid time-stepping operator\footnote{Time-stepping on the coarse-grid
is referred to as exact, if it yields the same solution as sequential time-stepping on the fine-grid.}
on level $\ell$ by introducing,
\begin{equation*}
    \Phi_\ell \approx \Phi_{\ell-1}^{m_{\ell-1}}, \qquad \text{for } \ell = 1, \ldots, n_\ell - 1,
\end{equation*}
and we write,
\begin{equation}
    A_\ell =
    \begin{bmatrix}
        I \\
        -\Phi_\ell & I \\
                   & -\Phi_\ell & I \\
                &               & \ddots & \ddots
    \end{bmatrix} \in \mathbb{R}^{N_x N_\ell \times N_x N_\ell}, \qquad \text{for } \ell = 1, \ldots, n_\ell - 1.
    \label{Al-eqn}
\end{equation}

MGRIT constructs coarse-grids from a Schur complement decomposition of \eqref{Al-eqn},
relative to the F/C-splitting from Figure \ref{time-grid-hierarchy-fig} \cite{DobrevKolevPeterssonSchroder2017}.
The Schur complement arises from certain so-called ``ideal'' multigrid restriction and interpolation operators.
Define ideal restriction, $R_\ell  \in \mathbb{R}^{N_x N_{\ell+1} \times N_x N_\ell}$
(level $\ell$ to $\ell+1$, for $\ell = 0, \ldots, n_\ell - 2$) as
{\small
\begin{alignat}{4}
    R_\ell &= \begin{bmatrix}
        I \\
        & \Phi_\ell^{m_\ell - 1} & \Phi_\ell^{m_\ell - 2} & \cdots & \Phi_\ell & I \\
        &                        &                        &        &           &   & \ddots \\
        &                        &                        &        &           &   &        & \Phi_\ell^{m_\ell - 1} & \Phi_\ell^{m_\ell - 2} & \cdots & \Phi_\ell & I \\
    \end{bmatrix},
    \label{Rl-eqn}
\end{alignat}
}and ideal interpolation,
$P_\ell \in \mathbb{R}^{N_x N_\ell \times N_x N_{\ell+1}}$
(level $\ell+1$ to $\ell$, for $\ell = 0, \ldots, n_\ell - 2$),
along with an auxillary operator $S_\ell \in \mathbb{R}^{N_x N_\ell \times N_x (N_\ell - N_{\ell+1})}$
for $\ell = 1, \ldots, n_\ell - 2$, as
{\small
\begin{alignat}{4}
    P_\ell &= \resizebox{.3\hsize}{!}{$\begin{bmatrix}
        I \\
        \Phi_\ell \\
        \vdots \\
        \Phi_\ell^{m_\ell - 1} \\
        & I \\
        & \Phi_\ell \\
        & \vdots \\
        & \Phi_\ell^{m_\ell - 1} \\
        && \ddots \\
        &&& I \\
    \end{bmatrix}$}, \qquad
    S_\ell &= \begin{bmatrix}
		0 \\
		I \\
		& I \\
		&	& \ddots \\
		&	&	& I \\
		&	&	& 0 \\
		&	&	& 0	& I \\
		&	&	&	&	& \ddots \\
		&	&	&	&	&		& I	\\
		&	&	&	&	&		& 0	\\
	\end{bmatrix}.
    \label{Pl-eqn}
\end{alignat}
}

Note that the interpolation operator is not defined as the transpose of the restriction operator,
since in general $\Phi_\ell \neq \Phi_\ell^T$
(for example, see \cite[Equation (28)]{HessenthalerNordslettenRoehrleSchroderFalgout2018}),
and thus, $P_\ell \neq R_\ell^T$.
The number of block columns in $S_\ell$ corresponds to the total number of F-points on level $\ell$.
If the operator $S_\ell$ is applied to the right of $A_\ell$, the result $A_\ell S_\ell$ is composed of
all block rows in $A_\ell$, and all block columns in $A_\ell$ that correspond to F-points on level $\ell$
(zeroing out the respective C-point block columns). Thus, $S_\ell^T A_\ell S_\ell$ is composed of all block rows
and block columns in $A_\ell$ that correspond to F-points.

Using the above definitions, it is straightforward to work out that the multigrid coarse-grid operator,
$R_\ell A_\ell P_\ell$, is then given by the Schur complement \cite{DobrevKolevPeterssonSchroder2017}
of $A_\ell$ \eqref{Al-eqn},
\begin{align}
    R_\ell A_\ell P_\ell
    &= R_{I_\ell} A_\ell P_\ell
    = \begin{bmatrix}
        I \\
        - \Phi_\ell^{m_\ell} & I \\
        & - \Phi_\ell^{m_\ell} & I \\
        && \ddots & \ddots \\
    \end{bmatrix},
    \label{RlAlPl-eqn}
\end{align}
where restriction by injection is given by the operator,
\begin{alignat}{4}
    R_{I_\ell} &= \begin{bmatrix}
        I \\
        & 0 & 0 & \cdots & I \\
        &   &   &        &   & \ddots \\
        &   &   &        &   &        & 0 & 0 & \cdots & 0 & I \\
    \end{bmatrix}, \qquad \text{for } \ell = 1, \ldots, n_\ell - 2.
    \label{RIl-eqn}
\end{alignat}
Here, $R_{I_\ell} \in \mathbb{R}^{N_x N_{\ell+1} \times N_x N_\ell}$
has a similar block structure as $R_\ell$, but
with all blocks $\Phi_\ell^{d}$ (for $d = 1, \ldots, m_\ell - 1$) set to zero.
Thus, $R_{I_\ell}$ restricts the C-points from level $\ell$ to level $\ell + 1$, omitting the respective F-points.
The number of block rows corresponds to the total number of C-points on level $\ell$, i.e. $N_{\ell + 1}$.
Also note that the inverse of $A_\ell$ is given analytically by \cite{DobrevKolevPeterssonSchroder2017}
{\small
\begin{alignat}{4}
    A_\ell^{-1} &= \begin{bmatrix}
        I \\
        \Phi_\ell            & I \\
        \Phi_\ell^2          & \Phi_\ell            & \ddots \\
        \vdots               & \vdots               &        & I \\
        \Phi_\ell^{N_\ell-1} & \Phi_\ell^{N_\ell-2} &        & \Phi_\ell & I \\
    \end{bmatrix}    \in \mathbb{R}^{N_x N_\ell \times N_x N_\ell}, \qquad \text{for } \ell = 1, \ldots, n_\ell - 1.
    \label{invAl-suppeqn}
\end{alignat}
}

In a typical multigrid fashion, MGRIT uses a complementary relaxation process to reduce error that is
not adequately reduced on coarser grids. Because MGRIT is a reduction-based method, coarse-grid correction
(should) eliminate error effectively at C-points, so this is coupled with an F-relaxation scheme to eliminate
error at F-points. F-relaxation can be seen as a block-Jacobi like method, where in this case each block
consists of a set of contiguous F-points in the time domain (that is, F-relaxation updates all F-points based
on sequential time integration from the closest (previous) C-point). Algebraically, this is equivalent to an
application of the idempotent operator
\begin{equation}
    F_\ell = P_\ell R_{I_\ell} = I - S_\ell (S_\ell^T A_\ell S_\ell)^{-1} S_\ell^T.
    \label{F-relaxation-eqn}
\end{equation}

When F-relaxation alone is insufficient, a stronger relaxation scheme can be used.
Define $T_\ell = R_{I_\ell}^T$. Then, C-relaxation updates a C-point based on taking one time step
from the previous F-point (equivalent to block Jacobi applied to the C-point block rows of $A_\ell$).
Algebraically, this corresponds to an application of
\begin{equation}
    C_\ell
    = I - T_\ell (T_\ell^T A_\ell T_\ell)^{-1} T_\ell^T A_\ell,
   \label{C-relaxation-eqn}
\end{equation}
where the zero columns correspond to C-points.
Working through the algebra, FCF-relaxation (subsequent F-, C- and F-relaxation steps) can be written as
\begin{alignat}{4}
    F_\ell C_\ell F_\ell = P_\ell (I - R_\ell A_\ell P_\ell) R_{I_\ell}.
    \label{FCF-relaxation-eqn}
\end{alignat}

\subsection{Simultaneous diagonalization of $\{\Phi_\ell\}$}\label{sec:MGRIT:diag}

Let $\Phi_\ell$ denote the time-stepping operator on level $\ell$. Primary results in this paper rest on the assumption that
$\{\Phi_\ell\}$ are diagonalizable with the same set of eigenvectors, for all levels $\ell=0,...,n_\ell-1$.
This is equivalent to saying that the set $\{\Phi_\ell\}$ commutes, that is, $\Phi_i\Phi_j = \Phi_j\Phi_i$ for all $i,j$, and that $\Phi_\ell$
is diagonalizable for all $\ell$. The concept of simultaneous diagonalization (albeit, in the Fourier basis) was introduced
in \cite{FriedhoffMaclachlan2015}, and was modified and used as the basis for the improved two-grid convergence bounds developed
in \cite{DobrevKolevPeterssonSchroder2017}.

In terms of when such an assumption is valid, let $\mathcal{L}$ be a time-independent operator (such as a spatial discretization)
that is propagated through time by operators $\{\Phi_\ell\}$. Note that all rational functions of $\mathcal{L}$ commute and that if
$\mathcal{L}$ is diagonalizable, so is any rational function of $\mathcal{L}$. Indeed, nearly all standard time-integration routines,
including all single-step Runge-Kutta-type methods, consist of some rational function of $\mathcal{L}$, and all such schemes are
simultaneously diagonalizable with the eigenvectors of $\mathcal{L}$. To that end, let $\mathcal{L} = UDU^{-1}$, where
$D_{kk} = \xi_k$ is a diagonal matrix containing the eigenvalues of $\mathcal{L}$ and columns of $U$ are the corresponding
eigenvectors. Denote the Butcher tableau of a general $s$-stage Runge-Kutta method as
    \begin{center}
        \begin{tabular}{ c | c }
            $\mathfrak{c}$ & $\mathfrak{A}$ \\ \hline
                           & $\mathfrak{b}^T$
        \end{tabular}.
    \end{center}
With some algebra, one can show that $\Phi_\ell$ corresponding to a given Butcher tableau is exactly given by the
Runge-Kutta stability function applied to $\mathcal{L}$ in block form,
\begin{equation}
    \begin{split}
    \Phi_\ell
    &= I + \delta_{t,\ell} (\mathfrak{b}^T\otimes I) \left(I - \delta_{t,\ell} \mathfrak{A} \otimes \mathcal{L}\right)^{-1} (\mathbf{1}_s \otimes \mathcal{L}) \\
    &= U\Big(I + \delta_{t,\ell} \left( \mathfrak{b}^T \otimes I \right) \left(I - \delta_{t,\ell} \mathfrak{A} \otimes D\right)^{-1} (\mathbf{1}_s \otimes D) \Big)U^{-1} \\
        & = U \Lambda_\ell U^{-1},
    \end{split}
    \label{phi-diagonal-form-eqn}
\end{equation}
where $(\Lambda_\ell)_{kk} = \lambda_{\ell,k}$, for $k = 1, \ldots, N_x$,
are the eigenvalues of $\Phi_\ell$, given by
\begin{equation} \label{eq:rk_eigs}
    \lambda_{\ell,k} = 1 + \delta_{t,\ell} \xi_k \mathfrak{b}^T (I - \delta_{t,\ell} \xi_k \mathfrak{A})^{-1} \mathbf{1}.
\end{equation}
Note that Equation \eqref{eq:rk_eigs} is exactly the stability function for a Runge-Kutta scheme applied
to $\delta_{t,\ell} \xi_k$, for time step $\delta_{t,\ell}$
and spatial eigenvalue $\{\xi_k\}$ \cite[Sec.\ 2.1, \S.4]{Kraaijevanger1991}. This highlights the fact
that solving the spatial eigenvalue problem also provides the eigenvalues of all $\Phi_\ell$ for arbitrary Runge-Kutta schemes
and time-step sizes.

Now suppose $\mathcal{A}$ is some matrix operator, where each entry is a rational function of time-stepping operators
in $\{\Phi_\ell\}$. In particular, this applies to error and residual propagation operators of MGRIT that are derived in
Section \ref{residual-and-error-propagation-sec}.
Let $\widetilde{U}$ denote a block-diagonal matrix with diagonal blocks
given by $U$. Then, as in \cite{Southworth2018},
\begin{align}\label{eq:u*u}
    \| \mathcal{A} (\Phi_0, \ldots, \Phi_{n_\ell-1}) \|_{(\widetilde{U}\widetilde{U}^*)^{-1}}
    &= \sup_k \| \mathcal{A} (\lambda_{0,k}, \ldots, \lambda_{n_\ell-1,k}) \|_2.
\end{align}
Thus, the $(\widetilde{U}\widetilde{U}^*)^{-1}$-norm of $\mathcal{A}(\Phi_0,...,\Phi_{n_\ell-1})$ can be computed by
maximizing the $\ell^2$-norm of $\mathcal{A}$ over eigenvalues of $\{\Phi_\ell\}$. In the case that $\{\Phi_\ell\}$ are
normal matrices, $U$ is unitary and the $(\widetilde{U}\widetilde{U}^*)^{-1}$-norm reduces to the standard Euclidean
2-norm. More generally, we have bounds on the $\ell^2$-norm of $\mathcal{A}(\Phi_0,...,\Phi_{n_\ell-1})$,
\begin{equation}
    \begin{split}
        \frac{1}{\kappa(U)} &\left(\sup_k \| \mathcal{A}(\lambda_{0,k}, \ldots, \lambda_{n_\ell-1,k})^i \|_2 \right) \\
        &\leq \| \mathcal{A}(\Phi_0, \ldots, \Phi_{n_\ell-1})^i \|_2
        \leq \kappa(U) \left( \sup_k \|\mathcal{A}(\lambda_{0,k}, \ldots, \lambda_{n_\ell-1,k})^i \|_2 \right),
    \end{split}
\end{equation}
for $i = 1, 2, \ldots$ applications of $\mathcal{A}$, where $\kappa(U)$ denotes the matrix condition number of $U$.\footnote{
A similar modified norm also occurs in the case of integrating in time with a mass matrix \cite{DobrevKolevPeterssonSchroder2017}.}

Here, we are interested in $\mathcal{A}$ corresponding to the error- and residual-propagation operators
of $n_l$-level MGRIT, denoted as $\mathcal{E}^{n_l}$ and $\mathcal{R}^{n_l}$, respectively.
Convergence of MGRIT requires
\begin{equation}
    \| \left( \mathcal{E}^{n_l} \right)^i \|, \| \left( \mathcal{R}^{n_l} \right)^i \| \to 0,
\end{equation}
as iteration $i$ increases.
To that end, bounding $\sup_k \| \left( \mathcal{E}^{n_l} (\lambda_{0,k}, \ldots, \lambda_{n_\ell-1,k}) \right)^i \| < 1$
for all $k$ provides necessary and sufficient conditions
for $\| \left( \mathcal{E}^{n_l} (\Phi_0, \ldots, \Phi_{n_\ell-1}) \right)^i \| \to 0$ with $i$
(eventually), and similarly for $\mathcal{R}^{n_l} (\Phi_0, \ldots, \Phi_{n_\ell-1})$. Here, we have focused on the $\ell^2$-norm.
It is worth pointing out that, in some cases, people are interested in an $\ell^\infty$-norm. However, because
$\|\cdot\|_\infty \leq \|\cdot\|_2$, conditions developed here apply to the $\ell^\infty$-norm as well.

\subsection{Two-level results, and why multilevel is harder}
\label{why-multilevel-is-harder-sec}
For multigrid-type algorithms, it is generally the case that two-level convergence rates provide a lower bound
on attainable multilevel convergence rates; that is, a multilevel algorithm will typically observe worse convergence
than its two-level counterpart. Indeed, more expensive multilevel cycling strategies such as
W-cycles or F-cycles are used specifically to solve the coarse-grid operator more accurately, thus better approximating a two-grid
method. The non-Galerkin coarse grid used in MGRIT makes this relationship more complicated, and it is not definitive that
two-grid convergence provides a lower bound on multilevel in \textit{all} cases. However, in practice, it is consistently the
case that two-level convergence is better than multilevel. To that end, a two-level method which converges every
iteration is a heuristic necessary condition for multilevel convergence.

Bounds on two-grid convergence obtained in \cite{DobrevKolevPeterssonSchroder2017,Southworth2018} are tight to
$\mathcal{O}(1/N_1)$. However, these bounds only apply to either (i) error/residual on all points and all iterations
\textit{except} the first, or (ii) error/residual for all iterations, but \textit{only on C-points}. In the multilevel setting, it is necessary
to consider convergence over \textit{all} points for \textit{one} iteration. To understand why this is, consider a three-level
MGRIT V-cycle, with levels $0$, $1$ and $2$. On level $1$, a single iteration of two-level MGRIT is applied as an approximate
residual correction for level 0. Suppose conditions in \cite{DobrevKolevPeterssonSchroder2017,Southworth2018} are
satisfied, ensuring a decrease in C-point error, but a possible increase in F-point error on level $1$. If the total error on
level $1$ has increased, then a correction is interpolated to level $0$ that is a \textit{worse approximation} to the desired
exact residual correction than no correction at all (corresponding to the zero initial guess used for coarse-grid correction
in multigrid). In general, if divergent behavior is observed for iterations in the middle of the hierarchy, it is likely the
case that the whole multilevel scheme will diverge.

Extensions to the theory developed in \cite{Southworth2018} can be derived to place tight bounds on error/residual
propagation for all points and one iteration \cite{tight2}. It turns out that indeed convergence factors can be larger
and the region of convergence with respect to $\delta_{t,\ell} \xi_k$ smaller compared with bounds on C-point error or later
iterations \cite{tight2}. Here, we do not analyze the two-level setting further and, rather, use this as motivation
to consider the multilevel setting in detail. The remainder of this paper derives analytical multilevel error and
residual propagation operators and proceeds to develop upper bounds on convergence in the $\ell^2$-norm.

\section{Multilevel residual and error propagation}\label{residual-and-error-propagation-sec}

As noted in \cite{Southworth2018}, residual and error propagation are formally similar,
that is,
\begin{alignat}{4}
    \mathcal{R}^{n_\ell} = A_0 \mathcal{E}^{n_\ell} A_0^{-1}
    =  A_0 (I - M^{-1} A_0) A_0^{-1}
    =  I - A_0 M^{-1},
    \label{R-E-similar-eqn}
\end{alignat}
where $M^{-1}$ denotes the MGRIT preconditioner for $A_0^{-1}$. Noting that there is a closed form
for $A^{-1}$ (see Equation \eqref{invAl-suppeqn}),
it follows that if the error propagation operator of a particular MGRIT algorithm is known, the residual
propagation operator can be easily found by the relation in \eqref{R-E-similar-eqn}, and vice-versa.
In this section, we derive the error propagation operator for generalized two-level MGRIT
and multilevel (V-cycle) MGRIT with F- and FCF-relaxation,
which are then used to develop analytic \emph{a priori} bounds on MGRIT convergence
in Section \ref{bounds-MGRIT-error-propagation-sec}, as well as to construct the
error propagation operator and compute its norm directly in numerical tests in Section \ref{numerical-results-sec}.

In the remainder of this work, we use the following convention for sums and products:
for
$b < a$,
$\sum_{i = a}^b f_i = 0$
and
$\prod_{i = a}^b f_i = 1$.
We further write $\mathcal{E}^{n_\ell = 2}$
to refer to the two-grid error propagation operator and similarly for other numbers of levels $n_\ell$.
\subsection{Two-level MGRIT with \lowercase{$r$}FCF-relaxation}\label{multilevel-propagators-rFCF-relax-sec}

Here, we generalize the two-level error propagator, as given in
\cite{DobrevKolevPeterssonSchroder2017}, to two-level MGRIT with $r$FCF-relaxation.
$r$FCF-relaxation refers to F-relaxation followed by $r$ CF-relaxation steps.
A similar result can be found in \cite{GanderKwokZhang2018}, where MGRIT was interpreted as Parareal
with overlap in time.

The error propagator for an exact iterative two-grid method with $r$FCF-relaxation and $r \geq 0$
is given as,
\begin{align}
    \begin{split}
        0 &= I - A_0^{-1} A_0
        = (I - P_0 (R_0 A_0 P_0)^{-1} R_0 A_0) (F_0 C_0)^r F_0 \\
        &= (I - P_0 (R_0 A_0 P_0)^{-1} R_0 A_0) P_0 (I - R_0 A_0 P_0)^r R_{I_0},
    \end{split}
    \label{exact-E-rFCF-nl2-eqn}
\end{align}
and MGRIT approximates the coarse-grid operator as $A_1 \approx R_0 A_0 P_0$.
\begin{lemma}
    The error propagator of two-level MGRIT with $r$FCF-relaxation and $r \geq 0$ is given as,
    \begin{equation}
        \mathcal{E}_{rFCF}^{n_\ell = 2}= (I - P_0 A_1^{-1} R_0 A_0) P_0 (I - R_0 A_0 P_0)^r R_{I_0}.
        \label{E-rFCF-nl2-eqn}
    \end{equation}
\end{lemma}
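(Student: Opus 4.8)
The plan is to recognize that the MGRIT error propagator is obtained from the exact two-grid iteration \eqref{exact-E-rFCF-nl2-eqn} by replacing the exact coarse-grid solve $(R_0 A_0 P_0)^{-1}$ with its MGRIT approximation $A_1^{-1}$. Since the exact iteration already equals $I - A_0^{-1} A_0 = 0$, the only substantive task is to verify the factorization of the relaxation sweep, namely the identity $(F_0 C_0)^r F_0 = P_0 (I - R_0 A_0 P_0)^r R_{I_0}$, which already appears in the second line of \eqref{exact-E-rFCF-nl2-eqn}. Granting this identity, the MGRIT propagator is $(I - P_0 A_1^{-1} R_0 A_0)(F_0 C_0)^r F_0 = (I - P_0 A_1^{-1} R_0 A_0) P_0 (I - R_0 A_0 P_0)^r R_{I_0}$, which is exactly \eqref{E-rFCF-nl2-eqn}.

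First I would establish two elementary identities encoding the compatibility of ideal interpolation with restriction by injection. From the block structure of $P_0$ and $R_{I_0}$ in \eqref{Pl-eqn} and \eqref{RIl-eqn}, injecting a coarse vector, interpolating it to the fine grid, and then sampling the C-points returns the original coarse vector, so $R_{I_0} P_0 = I$. Combined with $F_0 = P_0 R_{I_0}$ from \eqref{F-relaxation-eqn}, this also gives $F_0 P_0 = P_0 (R_{I_0} P_0) = P_0$.

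The crux is a ``push-through'' identity obtained from the single-step FCF relation \eqref{FCF-relaxation-eqn}. Right-multiplying $F_0 C_0 F_0 = P_0 (I - R_0 A_0 P_0) R_{I_0}$ by $P_0$, and using $F_0 P_0 = P_0$ on the left together with $R_{I_0} P_0 = I$ on the right, yields $F_0 C_0 P_0 = P_0 (I - R_0 A_0 P_0)$. I would then prove $(F_0 C_0)^r F_0 = P_0 (I - R_0 A_0 P_0)^r R_{I_0}$ by induction on $r$: the base case $r = 0$ is exactly $F_0 = P_0 R_{I_0}$, and the inductive step peels off one block via $(F_0 C_0)^{r+1} F_0 = F_0 C_0 \big[ (F_0 C_0)^r F_0 \big] = F_0 C_0 P_0 (I - R_0 A_0 P_0)^r R_{I_0} = P_0 (I - R_0 A_0 P_0)^{r+1} R_{I_0}$, invoking the push-through identity in the final step.

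The main obstacle is precisely establishing $R_{I_0} P_0 = I$ and the resulting push-through identity $F_0 C_0 P_0 = P_0 (I - R_0 A_0 P_0)$; once these are in hand, the induction and the final substitution of $A_1^{-1}$ for $(R_0 A_0 P_0)^{-1}$ are purely formal. A secondary point worth checking is that the stated empty-product/empty-sum conventions make the $r = 0$ case ($(I - R_0 A_0 P_0)^0 = I$, i.e.\ pure F-relaxation) consistent, so that $\mathcal{E}_{rFCF}^{n_\ell = 2}$ reduces to the standard two-level F-relaxation error propagator.
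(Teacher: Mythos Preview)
Your proposal is correct and follows the same approach as the paper, which simply substitutes $A_1^{-1}$ for the exact coarse-grid inverse $(R_0 A_0 P_0)^{-1}$ in \eqref{exact-E-rFCF-nl2-eqn}. In fact you go further than the paper does: the paper states the relaxation factorization $(F_0 C_0)^r F_0 = P_0 (I - R_0 A_0 P_0)^r R_{I_0}$ as part of \eqref{exact-E-rFCF-nl2-eqn} without justification, whereas you supply a clean inductive proof of it via the push-through identity $F_0 C_0 P_0 = P_0(I - R_0 A_0 P_0)$ derived from \eqref{FCF-relaxation-eqn} and $R_{I_0}P_0 = I$.
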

    \begin{proof}
        This follows by substituting the coarse-grid operator $A_1 \approx R_0 A_0 P_0$
        in to Equation \eqref{exact-E-rFCF-nl2-eqn}.
    \end{proof}

\subsection{Multilevel V-cycles with F-relaxation}\label{multilevel-V-cycle-propagators-F-relax-sec}

The error propagator of a multilevel V-cycle method with F-relaxation can be derived
from the error propagator of the exact two-level method on level $\ell$,
\begin{equation}
    \begin{split}
        0 &= I - A_\ell^{-1} A_\ell
        = (I - P_\ell (R_\ell A_\ell P_\ell)^{-1} R_\ell A_\ell) (I - S_\ell (S_\ell^T A_\ell S_\ell)^{-1} S_\ell^T A_\ell) \\
        &= I - (P_\ell (R_\ell A_\ell P_\ell)^{-1} R_\ell + S_\ell (S_\ell^T A_\ell S_\ell)^{-1} S_\ell^T) A_\ell
    \end{split}
\end{equation}
and the additional relation
\begin{alignat*}{4}
    A_\ell^{-1} &= P_\ell (R_\ell A_\ell P_\ell)^{-1} R_\ell + S_\ell (S_\ell^T A_\ell S_\ell)^{-1} S_\ell^T. \numberthis
    \label{Al-inv-V-F-relax-eqn}
\end{alignat*}

\begin{lemma}    \label{E-F-nl-theo}
    The error propagator of a multilevel V-cycle method with F-relaxation
    is given as,
    \begin{align}
        \begin{split}
        \mathcal{E}_{F}^{n_\ell} = P_0 R_{I_0}
        & - \left( \prod_{k = 0}^{n_\ell - 2} P_k \right) A_{n_\ell - 1}^{-1} \left( \prod_{k = n_\ell - 2}^{0} R_k \right) A_0 P_0 R_{I_0} \\
        & - \sum_{i = 0}^{n_\ell - 3} \left( \prod_{k = 0}^{i} P_k \right) S_{i + 1} (S_{i + 1}^T A_{i + 1} S_{i + 1})^{-1} S_{i + 1}^T \left( \prod_{k = i}^{0} R_k \right) A_0 P_0 R_{I_0},
        \end{split}
        \label{E-F-nl-eqn}
    \end{align}
    for $n_\ell \geq 2$ levels.
\end{lemma}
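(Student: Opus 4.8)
The plan is to prove \eqref{E-F-nl-eqn} by induction on the number of levels, realizing the V-cycle error propagator through the standard multigrid recursion and then unrolling it into the stated closed form. First I would write the action of one F-relaxation V-cycle on level $\ell$ as
\begin{equation*}
    \mathcal{E}_\ell = (I - P_\ell B_{\ell+1} R_\ell A_\ell)\, F_\ell,
\end{equation*}
where $F_\ell = P_\ell R_{I_\ell}$ is the F-relaxation error propagator and $B_{\ell+1}$ is the linear operator produced by applying one level-$(\ell+1)$ V-cycle, with zero initial guess, to the restricted residual. Since a V-cycle on level $\ell+1$ has error propagator $\mathcal{E}_{\ell+1}$, starting from zero it returns the approximate inverse $B_{\ell+1} = (I - \mathcal{E}_{\ell+1}) A_{\ell+1}^{-1}$, and on the coarsest level the solve is exact, giving the base case $\mathcal{E}_{n_\ell - 1} = 0$, i.e.\ $B_{n_\ell - 1} = A_{n_\ell - 1}^{-1}$. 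With $\mathcal{E}_F^{n_\ell} = \mathcal{E}_0$, the entire proof reduces to unrolling this recursion.

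The heart of the argument is to show that $B_{\ell+1}$ itself satisfies the clean recursion $B_{\ell+1} = S_{\ell+1}(S_{\ell+1}^T A_{\ell+1} S_{\ell+1})^{-1} S_{\ell+1}^T + P_{\ell+1} B_{\ell+2} R_{\ell+1}$. Substituting $\mathcal{E}_{\ell+1} = (I - P_{\ell+1}B_{\ell+2}R_{\ell+1}A_{\ell+1})F_{\ell+1}$ into $B_{\ell+1} = (I - \mathcal{E}_{\ell+1})A_{\ell+1}^{-1}$ produces three groups of terms, which I would simplify using two identities. The first, $(I - F_{\ell+1})A_{\ell+1}^{-1} = S_{\ell+1}(S_{\ell+1}^T A_{\ell+1} S_{\ell+1})^{-1} S_{\ell+1}^T$, follows directly from the F-relaxation error-propagation form and produces the Schur-complement F-point term. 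The second, $R_{\ell+1} A_{\ell+1} S_{\ell+1} = 0$, is obtained by left-multiplying the exact splitting \eqref{Al-inv-V-F-relax-eqn} by $R_{\ell+1}A_{\ell+1}$, which collapses to $R_{\ell+1} = R_{\ell+1} + R_{\ell+1}A_{\ell+1}S_{\ell+1}(S_{\ell+1}^T A_{\ell+1}S_{\ell+1})^{-1}S_{\ell+1}^T$ and hence forces the claimed orthogonality, since $(S_{\ell+1}^T A_{\ell+1}S_{\ell+1})^{-1}S_{\ell+1}^T$ has full row rank; this makes the surviving coarse-correction term collapse to exactly $P_{\ell+1}B_{\ell+2}R_{\ell+1}$.

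With the recursion for $B_\ell$ in hand, I would establish by induction from the coarsest level $n_\ell - 1$ upward the closed form
\begin{equation*}
    B_\ell = \Big(\prod_{k=\ell}^{n_\ell - 2} P_k\Big) A_{n_\ell - 1}^{-1} \Big(\prod_{k=n_\ell - 2}^{\ell} R_k\Big) + \sum_{j=\ell}^{n_\ell - 2} \Big(\prod_{k=\ell}^{j-1} P_k\Big) S_j (S_j^T A_j S_j)^{-1} S_j^T \Big(\prod_{k=j-1}^{\ell} R_k\Big),
\end{equation*}
each inductive step merely prepending $P_\ell$, appending $R_\ell$, and contributing the new $j=\ell$ term. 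Finally I would set $\ell = 1$, insert $B_1$ into $\mathcal{E}_0 = F_0 - P_0 B_1 R_0 A_0 F_0$, absorb the leading $P_0$ and trailing $R_0$ into the products, use $F_0 = P_0 R_{I_0}$, and reindex the sum with $i = j - 1$ to recover \eqref{E-F-nl-eqn} exactly; checking $n_\ell = 2$ (empty sum) confirms the base case. The main obstacle I anticipate is not the unrolling but establishing the two simplifying identities—particularly $R_{\ell+1}A_{\ell+1}S_{\ell+1} = 0$—and verifying that the V-cycle's approximate coarse-grid inverse is faithfully captured by $B_{\ell+1} = (I - \mathcal{E}_{\ell+1})A_{\ell+1}^{-1}$; once these are in place, the index bookkeeping in the products and sums is routine.
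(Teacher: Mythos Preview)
Your proposal is correct and reaches the same closed form, but it is organized differently from the paper's proof. The paper also argues by induction on $n_\ell$, but works directly with the error-propagator expression \eqref{E-F-nl-eqn}: assuming the formula for $n$ levels, it substitutes the exact splitting \eqref{Al-inv-V-F-relax-eqn},
\[
A_{n-1}^{-1} = P_{n-1}(R_{n-1}A_{n-1}P_{n-1})^{-1}R_{n-1} + S_{n-1}(S_{n-1}^{T}A_{n-1}S_{n-1})^{-1}S_{n-1}^{T},
\]
into the single occurrence of $A_{n-1}^{-1}$, which immediately produces the extra $i=n-2$ summand and the new coarsest term; the step is completed by approximating $R_{n-1}A_{n-1}P_{n-1}\approx A_n$. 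No auxiliary operator is introduced and no side identities are needed. Your route instead carries the approximate coarse-grid inverse $B_\ell=(I-\mathcal{E}_\ell)A_\ell^{-1}$, proves the recursion $B_\ell = S_\ell(S_\ell^{T}A_\ell S_\ell)^{-1}S_\ell^{T} + P_\ell B_{\ell+1}R_\ell$ via the two identities $(I-F_\ell)A_\ell^{-1}=S_\ell(S_\ell^{T}A_\ell S_\ell)^{-1}S_\ell^{T}$ and $R_\ell A_\ell S_\ell=0$, and only then unrolls. Both arguments are sound; the paper's is shorter because the splitting \eqref{Al-inv-V-F-relax-eqn} already packages your two identities, whereas your version makes the V-cycle preconditioner structure explicit and may generalize more transparently to other relaxation choices.
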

    \begin{proof}        For $n_\ell = 2$, we have,
        \begin{equation*}
            \mathcal{E}_{F}^{n_\ell = 2} = P_0 R_{I_0} - P_0 A_1^{-1} R_0 A_0 P_0 R_{I_0} = (I - P_0 A_1^{-1} R_0 A_0) P_0 R_{I_0},
        \end{equation*}
        which is equivalent to \eqref{E-rFCF-nl2-eqn} for $r = 0$.
        Now, assume it is true for $n_\ell = n$ levels. Substituting an exact two-level method on the
        coarse grid, that is \eqref{Al-inv-V-F-relax-eqn}, yields,
        \begin{alignat*}{4}
            \mathcal{E}_{F}^{n_\ell = n} &= P_0 R_{I_0}
            && - \left( \prod_{k = 0}^{n - 2} P_k \right) \bigg[ P_{n - 1} (R_{n - 1} A_{n - 1} P_{n - 1})^{-1} R_{n - 1} \\
            &&& + S_{n - 1} (S_{n - 1}^T A_{n - 1} S_{n - 1})^{-1} S_{n - 1}^T \bigg] \left( \prod_{k = n - 2}^{0} R_k \right) A_0 P_0 R_{I_0} \\
            &&& - \sum_{i = 0}^{n - 3} \left( \prod_{k = 0}^{i} P_k \right) S_{i + 1} (S_{i + 1}^T A_{i + 1} S_{i + 1})^{-1} S_{i + 1}^T \left( \prod_{k = i}^{0} R_k \right) A_0 P_0 R_{I_0} \\
            &= P_0 R_{I_0}
            && - \left( \prod_{k = 0}^{n - 1} P_k \right) (R_{n - 1} A_{n - 1} P_{n - 1})^{-1} \left( \prod_{k = n - 1}^{0} R_k \right) A_0 P_0 R_{I_0} \\
            &&& - \sum_{i = 0}^{n - 2} \left( \prod_{k = 0}^{i} P_k \right) S_{i + 1} (S_{i + 1}^T A_{i + 1} S_{i + 1})^{-1} S_{i + 1}^T \left( \prod_{k = i}^{0} R_k \right) A_0 P_0 R_{I_0}.
        \end{alignat*}
        Approximating the exact coarse grid operator on level $n + 1$ by
        $A_n \approx R_{n - 1} A_{n - 1} P_{n - 1}$ completes the proof.
    \end{proof}
\subsection{Multilevel V-cycles with FCF-relaxation}\label{multilevel-V-cycle-propagators-FCF-relax-sec}

The error propagator of a multilevel V-cycle method with FCF-relaxation can be derived
from the error propagator of the exact two-level method on level $\ell$,
\begin{align}
    \begin{split}
        0 =&~I - A_\ell^{-1} A_\ell
        = (I - P_\ell (R_\ell A_\ell P_\ell)^{-1} R_\ell A_\ell) F_\ell C_\ell F_\ell \\
        =&~I - P_\ell (R_\ell A_\ell P_\ell)^{-1} R_\ell A_\ell
        - S_\ell (S_\ell^T A_\ell S_\ell)^{-1} S_\ell^T A_\ell
        - T_\ell (T_\ell^T A_\ell T_\ell)^{-1} T_\ell^T A_\ell \\
        &+ S_\ell (S_\ell^T A_\ell S_\ell)^{-1} S_\ell^T A_\ell T_\ell (T_\ell^T A_\ell T_\ell)^{-1} T_\ell^T A_\ell \\
        &+ P_\ell (R_\ell A_\ell P_\ell)^{-1} R_\ell A_\ell T_\ell (T_\ell^T A_\ell T_\ell)^{-1} T_\ell^T A_\ell,
    \end{split}
\end{align}
and the additional relation
\begin{equation}
    \begin{split}
        A_\ell^{-1}
        =&~T_\ell (T_\ell^T A_\ell T_\ell)^{-1} T_\ell^T \\
        &+ \left[ S_\ell (S_\ell^T A_\ell S_\ell)^{-1} S_\ell^T + P_\ell (R_\ell A_\ell P_\ell)^{-1} R_\ell \right] \left[ I - A_\ell T_\ell (T_\ell^T A_\ell T_\ell)^{-1} T_\ell^T \right].
    \end{split}
\end{equation}

\begin{lemma}
    \label{E-FCF-nl-theo}
    The error propagator of a multilevel V-cycle method with\newline FCF-relaxation is given as,
    \begin{alignat*}{4}
        \mathcal{E}_{FCF}^{n_\ell} \numberthis
        &= P_0 (I - (T_0^T A_0 T_0)^{-1} R_{I_0} A_0 P_0) R_{I_0} \\
        & - \left( \prod_{k = 0}^{n_\ell - 2} P_k \right) A_{n_\ell - 1}^{-1} \left( \prod_{k = n_\ell - 2}^{0} R_k \left[ I - A_k T_k (T_k^T A_k T_k)^{-1} T_k^T \right] \right) A_0 P_0 R_{I_0} \\
        & - \sum_{i = 1}^{n_\ell - 2} \left( \prod_{k = 0}^{i - 1} P_k \right) \bigg[
        S_i (S_i^T A_i S_i)^{-1} S_i^T \left[ I - A_i T_i (T_i^T A_i T_i)^{-1} T_i^T \right] \\
        & + T_i (T_i^T A_i T_i)^{-1} T_i^T \bigg] \left( \prod_{k = i - 1}^{0} R_k \left[ I - A_k T_k (T_k^T A_k T_k)^{-1} T_k^T \right] \right) A_0 P_0 R_{I_0},
        \label{E-FCF-nl-eqn}
    \end{alignat*}
    with $n_\ell \geq 2$ levels.
\end{lemma}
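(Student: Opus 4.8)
The plan is to prove the formula by induction on the number of levels $n_\ell$, following the same substitution-and-reindex strategy used for the F-relaxation propagator in Lemma~\ref{E-F-nl-theo}, but now recursively unrolling the FCF decomposition of $A_\ell^{-1}$ stated immediately above the lemma rather than the F-relaxation decomposition \eqref{Al-inv-V-F-relax-eqn}. As in the F-relaxation case, the idea is that the only ``unexpanded'' inverse appearing at $n$ levels is $A_{n-1}^{-1}$; replacing it by its two-level FCF decomposition and then approximating the Galerkin operator $R_{n-1}A_{n-1}P_{n-1}$ by $A_n$ advances the recursion by one level.

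For the base case $n_\ell=2$ the sum over $i$ is empty, so the claim reduces to matching $P_0(I-(T_0^TA_0T_0)^{-1}R_{I_0}A_0P_0)R_{I_0} - P_0A_1^{-1}R_0[I-A_0T_0(T_0^TA_0T_0)^{-1}T_0^T]A_0P_0R_{I_0}$ with the exact two-level FCF propagator $(I-P_0A_1^{-1}R_0A_0)F_0C_0F_0$, i.e.\ with \eqref{E-rFCF-nl2-eqn} at $r=1$. First I would rewrite the relaxation factor using $F_0=P_0R_{I_0}$ from \eqref{F-relaxation-eqn} together with the injection identities $R_{I_0}T_0=I$ and $R_{I_0}P_0=I$, which hold because $T_0=R_{I_0}^T$ and $R_{I_0}$ selects the C-points. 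A short computation of $R_{I_0}C_0P_0R_{I_0}$ then collapses $F_0C_0F_0$ exactly onto the first term $P_0(I-(T_0^TA_0T_0)^{-1}R_{I_0}A_0P_0)R_{I_0}$. Applying the coarse-grid correction $-P_0A_1^{-1}R_0A_0$ to $F_0C_0F_0=P_0(I-R_0A_0P_0)R_{I_0}$ from \eqref{FCF-relaxation-eqn} and reconciling it with the second term then reduces to the single operator identity $R_0A_0F_0C_0F_0 = R_0[I-A_0T_0(T_0^TA_0T_0)^{-1}T_0^T]A_0P_0R_{I_0}$, which converts the relaxation factor $(I-R_0A_0P_0)$ produced on the fine grid into the $T_0$-based factor appearing in the statement.

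For the inductive step I would assume the formula at $n_\ell=n$ and substitute the decomposition $A_{n-1}^{-1}=T_{n-1}(T_{n-1}^TA_{n-1}T_{n-1})^{-1}T_{n-1}^T + [S_{n-1}(S_{n-1}^TA_{n-1}S_{n-1})^{-1}S_{n-1}^T+P_{n-1}(R_{n-1}A_{n-1}P_{n-1})^{-1}R_{n-1}][I-A_{n-1}T_{n-1}(T_{n-1}^TA_{n-1}T_{n-1})^{-1}T_{n-1}^T]$ for the inverse appearing in the second (coarse-solve) term. Distributing over the $(\prod_k P_k)\cdots(\prod_k R_k[\,\cdots\,])$ sandwich splits the level-$(n-1)$ contribution into three pieces: the $P_{n-1}(R_{n-1}A_{n-1}P_{n-1})^{-1}R_{n-1}$ piece lengthens the leading product of $P$'s and the trailing telescoping product by exactly one factor, the new factor being $R_{n-1}[I-A_{n-1}T_{n-1}(\cdots)T_{n-1}^T]$, and deepens the coarse solve to $(R_{n-1}A_{n-1}P_{n-1})^{-1}$; while the $S_{n-1}$ and $T_{n-1}$ pieces combine into precisely the new $i=n-1$ summand, whose leading factor $\prod_{k=0}^{n-2}P_k$ and trailing factor $\prod_{k=n-2}^{0}R_k[\cdots]$ match the stated index ranges. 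Reindexing and invoking $A_n\approx R_{n-1}A_{n-1}P_{n-1}$ then yields the formula at $n_\ell=n+1$.

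The hard part will be the base-case algebra: unlike the F-relaxation case, where the relaxation factor is simply $P_0R_{I_0}$, here one must show that the compact relaxation $F_0C_0F_0$ equals the $(T_0^TA_0T_0)^{-1}$ form in the statement and that the coarse-grid correction manufactures the $[I-A_kT_k(T_k^TA_kT_k)^{-1}T_k^T]$ factor. This rests on the injection identities $R_{I_0}T_0=R_{I_0}P_0=I$ and on a relation equating the action of $R_0A_0$ on $P_0R_0A_0P_0$ with its action on $A_0T_0(T_0^TA_0T_0)^{-1}T_0^TA_0P_0$ on the relevant coarse subspace. Once this reconciliation is established, the inductive bookkeeping of attaching each $[I-A_kT_k(\cdots)T_k^T]$ factor to its matching $R_k$ in the telescoping product, and confirming that the $S_i$ and $T_i$ pieces assemble into the claimed summand, is routine and parallels Lemma~\ref{E-F-nl-theo} exactly.
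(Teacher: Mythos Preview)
Your proposal is correct and takes essentially the same approach as the paper: the paper's proof is simply ``analogous to the proof of Lemma~\ref{E-F-nl-theo}'', i.e.\ induction on $n_\ell$ using the FCF decomposition of $A_{n-1}^{-1}$ in place of \eqref{Al-inv-V-F-relax-eqn}, which is exactly what you outline. Your more detailed treatment of the base case (noting $T_0^TA_0T_0=I$ and $R_{I_0}A_0P_0=R_0A_0P_0$ from \eqref{RlAlPl-eqn}) and of the inductive bookkeeping goes beyond what the paper spells out, but follows the same route.
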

    \begin{proof}
        The proof is analogous to the proof of Lemma \ref{E-F-nl-theo}.
    \end{proof}
\subsection{Multilevel F-cycles with \lowercase{$r$}FCF-relaxation}\label{multilevel-F-cycle-propagators-rFCF-relax-sec}

Similar to the notation used for V-cycle error propagation, let $\mathcal{F}^{n_\ell}$ denote error propagation of MGRIT
F-cycles with $n_\ell$ levels, with a subscript denoting relaxation scheme.
Following \cite[pg. 53]{HackbuschTrottenberg2006}, error propagation of MGRIT
for a multilevel F-cycle with $r$FCF-relaxation can be defined recursively,
\begin{equation}
    \mathcal{F}_{rFCF}^{n_\ell} = M_{rFCF,0}^F, \qquad \text{for } n_\ell \geq 2,
    \label{F-rFCF-nl-eqn}
\end{equation}
with
\begin{alignat*}{4}
    M_{rFCF,\ell - 1}^F
    &= P_{\ell - 1} \left( I - \left( I - M_\ell^V M_\ell^F \right) A_\ell^{-1} R_{\ell - 1} A_{\ell - 1} P_{\ell-1} \right) (I - R_{\ell - 1} A_{\ell - 1} P_{\ell - 1})^r R_{I_{\ell - 1}},    \\
    M_{rFCF,\ell - 1}^V
    &= P_{\ell - 1} \left( I - \left( I - M_\ell^V \right) A_\ell^{-1} R_{\ell - 1} A_{\ell - 1} P_{\ell - 1} \right) (I - R_{\ell - 1} A_{\ell - 1} P_{\ell - 1})^r R_{I_{\ell - 1}},
\end{alignat*}
for $l = 1, \ldots, n_\ell - 2$, and,
\begin{align*}
    M_{rFCF,n_\ell - 2}^F
    &= M_{rFCF,n_\ell - 2}^V \\
    &= P_{n_\ell - 2} \left( I - A_{n_\ell - 1}^{-1} R_{n_\ell - 2} A_{n_\ell - 2} P_{n_\ell - 2} \right) (I - R_{n_\ell - 2} A_{n_\ell - 2} P_{n_\ell - 2})^r R_{I_{n_\ell - 2}}.
\end{align*}
It is easy to verify, that for $n_\ell = 2$, the recursive formulae result in
    $\mathcal{F}_{rFCF}^{n_\ell = 2} = \mathcal{E}_{rFCF}^{n_\ell = 2}$.
For $n_\ell = 3$ and $r = 0$, we can write,
\begin{alignat}{4}
    \mathcal{F}_F^{n_\ell = 3}
    &= \mathcal{E}_F^{n_\ell = 2}
    + P_0 P_1 \left( I - A_2^{-1} R_1 A_1 P_1 \right)^2 R_{I_1} A_1^{-1} R_0 A_0 P_0 R_{I_0}.
    \label{F3_F-E2_F-relation-eqn}
\end{alignat}
However, it is not straightforward to convert the recursive definition in \eqref{F-rFCF-nl-eqn}
into a summation similar to \eqref{E-F-nl-eqn} or \eqref{E-FCF-nl-eqn}, for arbitrary $n_\ell$.
Nevertheless, this formula is still useful for numerically computing bounds of $\mathcal{F}_{rFCF}^{n_\ell}$
and is, thus, included for completeness.

\section{Bounds for MGRIT residual and error propagation}\label{bounds-MGRIT-error-propagation-sec}
Following the work in \cite{DobrevKolevPeterssonSchroder2017},
we assume that operators $\Phi_\ell$, $\ell = 0, \ldots, n_\ell - 1$, can be diagonalized by the same set of
eigenvectors (see Equation \eqref{phi-diagonal-form-eqn}), with eigenvalues denoted $\{\lambda_{\ell,k}\}$,
for $1 \leq k \leq N_x$.
We also assume that $\Phi_\ell$ are strongly stable time stepping operators, that is, $\|\Phi_\ell\| < 1$, which implies
$| \lambda_{\ell,k} | < 1$ for all $\ell = 0, \ldots, n_\ell - 1$ and $k = 1, \ldots, N_x$.\footnote{Note, it is possible to have a
stable time integration scheme with $\|\Phi_\ell\| > 1$ if $\| \Phi_\ell^i \| < 1$ for some $i$
\cite[Section 9.5, Equation (9.22)]{Leveque2007, LaxRichtmyer1956}, but we do not consider such schemes.}
To simplify notation in the
following derivations, we use $\Phi_\ell$ to denote the diagonalized time stepping operator moving forward.
Results then follow in a $(\widetilde{U}\widetilde{U}^*)^{-1}$-norm, which (as discussed in Section \ref{sec:MGRIT:diag}) is equivalent
to the $\ell^2$-norm if $\Phi_\ell$ is normal (and, thus, $U$ is unitary).

For ease of presentation and because many of the derivations are fairly involved, but repetitive, a number
of steps are moved to the \emph{Supplementary Materials}.
We refer the interested reader to \ref{bounds-MGRIT-error-propagation-appsec}.

\subsection{Residual and error on level $0$ and level $1$}\label{residual-and-error-on-level-0-1-sec}
It is typically difficult or impossible in practical applications to precisely measure the error propagation
of an iterative method or the error itself.
It is, however, possible to measure the residual, and stopping criteria for iterative methods
are often based on a residual {tolerance}. In the case of MGRIT, there is a nice relation between
error and residual propagation. The norm of residual and error propagation operators are equal
in the $(\widetilde{U}\widetilde{U}^*)^{-1}$-norm (recall, $\widetilde{U}$ is a block diagonal
matrix of eigenvectors, $U$).\footnote{Although \cite{Southworth2018} specifically addresses two-grid
bounds, equality of error and residual propagation in the $(\widetilde{U}\widetilde{U}^*)^{-1}$-norm
follows if $\Phi_\ell$ is simultaneously diagonalizable for all levels $\ell$.} If $\{ \Phi_\ell \}$ are normal operators,
they are diagonalizable by unitary transformation, in which case $\widetilde{U}\widetilde{U}^* = I$, and error
and residual propagation are equal in the $\ell^2$-norm.

Similar to Section \ref{residual-and-error-propagation-sec},
let $\mathcal{E}_{rFCF}^{n_\ell}$ be the $n_l$-level error propagator, acting on all points
on level $0$.
We further refer to $\mathcal{E}_{rFCF}^{n_\ell,\Delta}$ as the error propagator
that acts on all points on level $1$,
i.e. on the error at the C-points
on level~$0$
In the two-grid setting, we also refer to $\mathcal{E}_{rFCF}^{n_\ell,\Delta}$ as the coarse-grid error propagator.

To quantify how \emph{fast} MGRIT converges in the \emph{worst case},
we can bound the convergence factor of the fine grid residual \cite{DobrevKolevPeterssonSchroder2017}
$\mathbf{r}_{i+1}$ at iteration $i + 1$, $i \in \mathbb{N}_0$, by the norm of the error propagator on level $1$ (in the unitary case),
\begin{equation}
    \| \mathbf{r}_{i+1} \|_2 / \| \mathbf{r}_i \|_2
    = \| A_1 \mathbf{e}_{i+1}^\Delta \|_2 / \| A_1 \mathbf{e}_i^\Delta \|_2
    \leq \| A_1 \mathcal{E}_{rFCF}^{n_\ell, \Delta} A_1^{-1} \|_2
    = \| \mathcal{E}_{rFCF}^{n_\ell, \Delta} \|_2,
    \label{r-convergence-factor-eq}
\end{equation}
where $\mathbf{e}_{i+1}^\Delta$ is the error on level $1$ or equivalently, error at C-points on level $0$. With,
\begin{equation}
    \begin{split}
        \mathbf{e}_{i+1}^\Delta &= \mathcal{E}_{rFCF}^{n_\ell,\Delta} \mathbf{e}_i^\Delta = \mathcal{E}_{rFCF}^{n_\ell,\Delta} R_{I_0} \mathbf{e}_i, \\
        &\Leftrightarrow \quad P_0 \mathbf{e}_{i+1}^\Delta = P_0 \mathcal{E}_{rFCF}^{n_\ell,\Delta} \mathbf{e}_i^\Delta = P_0 \mathcal{E}_{rFCF}^{n_\ell,\Delta} R_{I_0} \mathbf{e}_i,
    \end{split}
\end{equation}
we can identify,
$    \mathcal{E}_{rFCF}^{n_\ell,\Delta} = R_{I_0} \mathcal{E}_{rFCF}^{n_\ell} P_0$,
which is a generalization of the approach in \cite{DobrevKolevPeterssonSchroder2017},
where the operators $P_0$ and $R_{I_0}$ are pulled out to the left and right of the error propagator.
Thus, in general we analyze the error propagator on level $1$ to bound residual propagation on level $0$,
as given in \eqref{r-convergence-factor-eq}.

This raises the question of how the error develops at the F-points on the fine grid.
Considering error propagation on level $0$ over $i$ iterations,
\begin{equation}
    \begin{split}
        \mathbf{e}_{i+1}
        &= \mathcal{E}_{rFCF}^{n_\ell} \mathbf{e}_{i}
        = \ldots
        = \left( \mathcal{E}_{rFCF}^{n_\ell} \right)^{i+1} \mathbf{e}_0 \\
        &= \left( P_0 \mathcal{E}_{rFCF}^{n_\ell,\Delta} R_{I_0} \right)^{i+1} \mathbf{e}_0
        = P_0 \left( \mathcal{E}_{rFCF}^{n_\ell,\Delta} \right)^{i+1} R_{I_0} \mathbf{e}_0,
    \end{split}
\end{equation}
we find that error propagation at the F-points of the fine grid can be bounded by error propagation
at the respective C-points times a constant.
\begin{lemma}
    Error propagation on level $0$ for an MGRIT V-cycle method can be bounded by error propagation
    on level $1$,
    \begin{equation}
        \| \mathcal{E}_{rFCF}^{n_\ell} \|_2 \leq \sqrt{m_0} \| \mathcal{E}_{rFCF}^{n_\ell,\Delta} \|_2,
    \end{equation}
    with temporal coarsening factor $m_0$ on level $0$.
    \label{P0-bound-lemma}
\end{lemma}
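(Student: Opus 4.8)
The plan is to exploit the factorization of the $n_\ell$-level error propagator through the coarse-grid error propagator established just above the lemma, namely $\mathcal{E}_{rFCF}^{n_\ell} = P_0\,\mathcal{E}_{rFCF}^{n_\ell,\Delta}\,R_{I_0}$, and then control the two purely ``geometric'' factors $P_0$ and $R_{I_0}$ separately using submultiplicativity of the (diagonalized) $\ell^2$-norm. Writing
\begin{equation*}
  \|\mathcal{E}_{rFCF}^{n_\ell}\|_2 = \|P_0\,\mathcal{E}_{rFCF}^{n_\ell,\Delta}\,R_{I_0}\|_2 \le \|P_0\|_2\,\|\mathcal{E}_{rFCF}^{n_\ell,\Delta}\|_2\,\|R_{I_0}\|_2,
\end{equation*}
the entire lemma reduces to showing $\|R_{I_0}\|_2 = 1$ and $\|P_0\|_2 \le \sqrt{m_0}$. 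I would first dispose of $R_{I_0}$: from its definition in \eqref{RIl-eqn}, injection simply selects the C-point blocks, so its block rows are mutually orthogonal with a single identity block each, giving $R_{I_0} R_{I_0}^T = I$ and hence $\|R_{I_0}\|_2 = 1$.

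The substantive step is the bound on $P_0$. From \eqref{Pl-eqn}, each coarse (C-)point is interpolated to a contiguous block of fine points via the column $[\,I,\ \Phi_0,\ \dots,\ \Phi_0^{m_0-1}\,]^T$ (with the final C-point contributing only an identity block), and these column supports are disjoint across distinct coarse points. Consequently $P_0^T P_0$ is block diagonal, each interior diagonal block being $\sum_{j=0}^{m_0-1} (\Phi_0^*)^j \Phi_0^j$. Since we work in the diagonalized setting where $\Phi_0$ is diagonal with entries $\lambda_{0,k}$, each such block equals $\mathrm{diag}\big(\sum_{j=0}^{m_0-1} |\lambda_{0,k}|^{2j}\big)$, whence
\begin{equation*}
  \|P_0\|_2^2 = \max_k \sum_{j=0}^{m_0-1} |\lambda_{0,k}|^{2j}.
\end{equation*}
Invoking strong stability $|\lambda_{0,k}| < 1$ (assumed at the start of Section~\ref{bounds-MGRIT-error-propagation-sec}), each of the $m_0$ summands is bounded by $1$, so the sum is at most $m_0$ and $\|P_0\|_2 \le \sqrt{m_0}$. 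Combining the three estimates yields the claim.

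The main obstacle I anticipate is the $\|P_0\|_2 \le \sqrt{m_0}$ estimate: one must verify the disjoint-support block structure carefully so that $\|P_0\|_2$ is exactly the maximum over the per-C-point blocks (rather than a looser sum over blocks), and then lean on $|\lambda_{0,k}| < 1$ to turn the finite geometric sum into the factor $m_0$. Everything else is bookkeeping. A minor point I would state explicitly is that all norms here are the $\ell^2$-norm of the diagonalized operators (equivalently, the $(\widetilde{U}\widetilde{U}^*)^{-1}$-norm of the original operators), as fixed at the beginning of Section~\ref{bounds-MGRIT-error-propagation-sec}; the factorization and submultiplicativity argument are otherwise norm-agnostic.
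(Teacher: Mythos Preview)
Your proof is correct and follows the same overall structure as the paper: factor $\mathcal{E}_{rFCF}^{n_\ell} = P_0\,\mathcal{E}_{rFCF}^{n_\ell,\Delta}\,R_{I_0}$, apply submultiplicativity, and bound $\|P_0\|_2$ and $\|R_{I_0}\|_2$ separately. The only difference is in how those two factor bounds are obtained. The paper applies the inequality $\|D\|_2 \le \sqrt{\|D\|_1\,\|D\|_\infty}$ to both $P_0$ and $R_{I_0}$, reading off $\|P_0\|_1 \le m_0$, $\|P_0\|_\infty \le 1$, $\|R_{I_0}\|_1 = \|R_{I_0}\|_\infty = 1$ from the block structure and the assumption $|\lambda_{0,k}|<1$. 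You instead compute the $\ell^2$-norms directly: $R_{I_0}R_{I_0}^T = I$ gives $\|R_{I_0}\|_2 = 1$, and the block-diagonal form of $P_0^*P_0$ in the diagonalized setting gives $\|P_0\|_2^2 = \max_k \sum_{j=0}^{m_0-1}|\lambda_{0,k}|^{2j} \le m_0$. Your route is marginally sharper (it yields the exact $\|P_0\|_2$ rather than an upper bound) and avoids invoking an auxiliary norm inequality, at the cost of explicitly exploiting the disjoint column-support structure of $P_0$; the paper's route is slightly quicker once the $\sqrt{\|\cdot\|_1\|\cdot\|_\infty}$ inequality is taken for granted. Either way the constant $\sqrt{m_0}$ is the same.
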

\begin{proof}    This follows from,
    \begin{alignat*}{4}
        \| \mathcal{E}_{rFCF}^{n_\ell} \|_2
        &= \| P_0 \mathcal{E}_{rFCF}^{n_\ell,\Delta} R_{I_0} \|_2
        \leq \| P_0 \|_2 \| \mathcal{E}_{rFCF}^{n_\ell,\Delta} \|_2 \| R_{I_0} \|_2 \\
        &\leq \sqrt{\| P_0 \|_1 \| P_0 \|_\infty} \| \mathcal{E}_{rFCF}^{n_\ell,\Delta} \|_2 \sqrt{\| R_{I_0} \|_1 \| R_{I_0} \|_\infty}
        \leq \sqrt{m_0} \| \mathcal{E}_{rFCF}^{n_\ell,\Delta} \|_2,
    \end{alignat*}
    with submultiplicativity and the inequality
    $ \| D \|_2 \leq \sqrt{ \| D \|_1 \| D \|_\infty }$ (see \cite{Higham1992}).
\end{proof}

\begin{remark}
    It is clear from Lemma \ref{P0-bound-lemma}, that if the error at C-points on level $0$ converges,
    then the error at F-points on level $0$ converges as well.
    This is the basis for the theory developed in the rest of this work,
    where convergence is attained by bounding $\mathcal{E}_{rFCF}^{n_\ell,\Delta} $ in norm.
\end{remark}

Lemma \ref{P0-bound-lemma} is intuitive in the sense that the fine grid error propagation
is a direct result of the level $1$ error propagation; it is simply ideal interpolation applied to the level $1$ error;
that is, the operator $P_0$ propagates the error at the C-points on level $0$ to the subsequent F-points.
A similar result was presented in \cite{Southworth2018} for two-level convergence of Parareal
and MGRIT.

Based on the formulae derived in Section \ref{residual-and-error-propagation-sec}, we can construct
residual and error propagators numerically and bound the \emph{worst case} convergence factor
of MGRIT (\emph{a priori}) from above via
\begin{align}
    c_f = \max_i \| \mathbf{r}_{i+1} \|_2 / \| \mathbf{r}_i \|_2
    \leq \| \mathcal{E}_{rFCF}^{n_\ell, \Delta} \|_2,
    \label{error-l2-upper-bound-eqn}
\end{align}
which corresponds to the maximum singular value of $\mathcal{E}_{rFCF}^{n_\ell, \Delta}$.
In practice, the dimension of $\mathcal{E}_{rFCF}^{n_\ell}$ grows with the problem size in space and time,
$N_x$ and $N_0$. Similarly, $\mathcal{E}_{rFCF}^{n_\ell,\Delta}$ grows with $N_x$ and $N_1$.
Depending on the available resources, numerical construction and investigation
of these operators may be limited by memory consumption and/or compute time.
To that end, it is desirable to derive further \emph{cheaper} upper bounds that enable fast assessment
of MGRIT convergence for larger space-time problem sizes. In the following,
we present several \emph{a priori} bounds
and approximate convergence factors for fine-grid residual propagation and error propagation on level~$1$.
\subsection{Upper bound using inequality}\label{upper-bound-using-inequality-sec}

One straightforward way to reduce computational cost by roughly one order of magnitude is bounding
the $\ell^2$-norm of the error propagator on level $1$ using the well-known inequality \cite{Higham1992},
\begin{equation}
    \| \mathcal{E}_{rFCF}^{n_\ell,\Delta} \|_2^2
    \leq \| \mathcal{E}_{rFCF}^{n_\ell,\Delta} \|_1 \| \mathcal{E}_{rFCF}^{n_\ell,\Delta} \|_\infty.
    \label{inequality-eq}
\end{equation}
In \cite{DobrevKolevPeterssonSchroder2017}, this was used to develop an upper-bound on
two-grid convergence, which was proven to be sharp in \cite{Southworth2018}.
This section extends this approach to three and four grid levels based on analytic formulae.
Although the sharpness of \eqref{inequality-eq} suffers as the number of levels increases (see Section
\ref{numerical-results-sec}), we show that it is still reasonably sharp and provides a useful tool to
analyze MGRIT convergence a priori.

\subsubsection{Two-level MGRIT with \lowercase{$r$}FCF-relaxation}
\label{two-level-MGRIT-with-rFCF-relaxation-sec}

The coarse-grid error propagator follows from Equation \eqref{E-rFCF-nl2-eqn} with $n_\ell = 2$
(see \ref{two-level-MGRIT-with-rFCF-relaxation-appsec}),
\begin{equation}
\resizebox{0.91\textwidth}{!}{$
    \mathcal{E}_{rFCF}^{n_\ell = 2, \Delta}
    = \begin{bmatrix}
        0 \\
        \vdots \\
        0 \\
        (\Phi_0^{m_0} - \Phi_1) \Phi_0^{r m_0} \\
        \Phi_1 (\Phi_0^{m_0} - \Phi_1) \Phi_0^{r m_0} & (\Phi_0^{m_0} - \Phi_1) \Phi_0^{r m_0} \\
        \vdots \\
        \Phi_1^{N_1 - r - 2} (\Phi_0^{m_0} - \Phi_1) \Phi_0^{r m_0} & \Phi_1^{N_1 - r - 3} (\Phi_0^{m_0} - \Phi_1) \Phi_0^{r m_0} & \cdots & (\Phi_0^{m_0} - \Phi_1) \Phi_0^{r m_0} & 0 & \cdots & 0
    \end{bmatrix},
$}
    \label{E-rFCF-nl2-evaluated-eqn}
\end{equation}
where the first $r+1$ rows and last $r+1$ columns are zero.
\begin{theorem}
    Let $\{ \lambda_{\ell,k} \}$ be the eigenvalues of $\{ \Phi_\ell \}$.
    Then, the \emph{worst case} convergence factor of the fine-grid residual
    of two-level MGRIT with $r$FCF-relaxation is bounded by
    \begin{equation}
        c_f \leq \max_{1 \leq k \leq N_x} | \lambda_{0,k}^{m_0} - \lambda_{1,k} | | \lambda_{0,k} |^{r m_0} \frac{1 - | \lambda_{1,k} |^{N_1 - 1 - r}}{1 - | \lambda_{1,k} |}.
        \label{E-rFCF-nl2-inequality-eqn}
    \end{equation}
    \label{cf-2-level-rFCF-theo}
\end{theorem}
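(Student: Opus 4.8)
The plan is to chain the two inequalities already available in the text with an explicit evaluation of the closed-form propagator. Starting from \eqref{error-l2-upper-bound-eqn}, the worst-case factor satisfies $c_f \le \|\mathcal{E}_{rFCF}^{n_\ell=2,\Delta}\|_2$, and by \eqref{inequality-eq} this is controlled by $\sqrt{\|\mathcal{E}_{rFCF}^{n_\ell=2,\Delta}\|_1\,\|\mathcal{E}_{rFCF}^{n_\ell=2,\Delta}\|_\infty}$. The key structural observation (Section \ref{sec:MGRIT:diag}, cf. \eqref{eq:u*u}) is that, since the $\Phi_\ell$ here denote the already-diagonalized operators, every block entry of \eqref{E-rFCF-nl2-evaluated-eqn} is diagonal; permuting the $N_x N_1 \times N_x N_1$ matrix into block-diagonal form decouples it into $N_x$ independent scalar matrices $E_k \in \mathbb{C}^{N_1\times N_1}$, one per eigenvalue index $k$. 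Because a symmetric permutation is an isometry for every induced $\ell^p$-norm, $\|\mathcal{E}_{rFCF}^{n_\ell=2,\Delta}\|_p = \max_{1\le k \le N_x}\|E_k\|_p$ for $p=1,2,\infty$, so it suffices to analyze a single $E_k$ and maximize at the end.

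Fixing $k$ and abbreviating $a=\lambda_{0,k}^{m_0}-\lambda_{1,k}$, $b=\lambda_{1,k}$, and $c=\lambda_{0,k}^{rm_0}$, equation \eqref{E-rFCF-nl2-evaluated-eqn} shows that $E_k$ is the lower-triangular Toeplitz matrix whose first $r+1$ rows and last $r+1$ columns vanish and whose remaining $(p,q)$-entry (with $p\ge q$) equals $b^{\,p-q}ac$. I would then read off its two norms directly. The absolute row sums $|ac|\sum_{j=0}^{p-1}|b|^j$ increase with the (nonzero) row index $p$, so $\|E_k\|_\infty$ is attained in the last nonzero row; the absolute column sums $|ac|\sum_{j=0}^{N_1-r-1-q}|b|^j$ are largest for the first nonzero column $q=1$, giving $\|E_k\|_1$. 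Both reduce to the same geometric sum, and since strong stability gives $|b|=|\lambda_{1,k}|<1$ the series converges to
\[
\|E_k\|_1=\|E_k\|_\infty=|\lambda_{0,k}^{m_0}-\lambda_{1,k}|\,|\lambda_{0,k}|^{rm_0}\,\frac{1-|\lambda_{1,k}|^{N_1-1-r}}{1-|\lambda_{1,k}|}.
\]

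To close, I would exploit the coincidence $\|E_k\|_1=\|E_k\|_\infty$: inequality \eqref{inequality-eq} then gives $\|E_k\|_2\le\sqrt{\|E_k\|_1\|E_k\|_\infty}=\|E_k\|_1$, and taking the maximum over $k$ yields $\|\mathcal{E}_{rFCF}^{n_\ell=2,\Delta}\|_2\le\max_k\|E_k\|_1$, which is precisely \eqref{E-rFCF-nl2-inequality-eqn}. I expect the only genuine obstacle to be the bookkeeping in extracting the Toeplitz structure of $E_k$ from \eqref{E-rFCF-nl2-evaluated-eqn} and verifying that the extremal row and column sums live in the last row and first column, respectively (equivalently, that $\|E_k\|_1=\|E_k\|_\infty$); once that is settled, the geometric summation and the $\|\cdot\|_2\le\sqrt{\|\cdot\|_1\|\cdot\|_\infty}$ estimate are routine. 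I would also note the degenerate convention $N_1-1-r\ge 0$ --- i.e.\ that the coarse grid is large enough to support $r$ CF-sweeps --- since otherwise $E_k=0$ and the bound holds trivially.
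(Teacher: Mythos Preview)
Your proposal is correct and follows essentially the same approach as the paper: apply \eqref{inequality-eq} to the explicit propagator \eqref{E-rFCF-nl2-evaluated-eqn}, observe that the Toeplitz structure forces $\|\cdot\|_1=\|\cdot\|_\infty$, and evaluate the resulting geometric sum. The only difference is that you make the per-mode decoupling explicit up front, whereas the paper states this separately as Lemma~\ref{individual-modes-lem}.
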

\begin{proof}    This follows from Equation \eqref{E-rFCF-nl2-evaluated-eqn} and inequality \eqref{inequality-eq},
    \begin{alignat*}{4}
        \| \mathcal{E}_{rFCF}^{n_\ell = 2, \Delta} \|_2
        &\leq \sqrt{\| \mathcal{E}_{rFCF}^{n_\ell = 2, \Delta} \|_1 \| \mathcal{E}_{rFCF}^{n_\ell = 2, \Delta} \|_\infty }
        = \| \mathcal{E}_{rFCF}^{n_\ell = 2} \|_1 \\
        &= \max_{1 \leq k \leq N_x} | \lambda_{0,k}^{m_0} - \lambda_{1,k} | | \lambda_{0,k} |^{r m_0} \frac{1 - | \lambda_{1,k} |^{N_1 - 1 - r}}{1 - | \lambda_{1,k} |},
    \end{alignat*}
    where the relationship $\sum_{i = 0}^{N - 1} a^i = (1 - a^N) / (1 - a)$ was used.
\end{proof}

\begin{remark}
The cases of F- and FCF-relaxation (i.e. $r = 0$ and $r = 1$),
yield the result in \cite{DobrevKolevPeterssonSchroder2017},
\begin{alignat*}{4}
    \| \mathcal{E}_{F}^{n_\ell = 2, \Delta} \|_2
    &\leq \max_{1 \leq k \leq N_x} | \lambda_{0,k}^{m_0} - \lambda_{1,k} | \frac{1 - | \lambda_{1,k} |^{N_1 - 1}}{1 - | \lambda_{1,k} |},
    \\
    \| \mathcal{E}_{FCF}^{n_\ell = 2, \Delta} \|_2
    &\leq \max_{1 \leq k \leq N_x} | \lambda_{0,k}^{m_0} - \lambda_{1,k} | | \lambda_{0,k} |^{m_0} \frac{1 - | \lambda_{1,k} |^{N_1 - 2}}{1 - | \lambda_{1,k} |}.
\end{alignat*}
    In \cite{Southworth2018}, it was shown that the bound in Theorem \ref{cf-2-level-rFCF-theo} is exact to $O (1 / N_1)$ for F- and FCF-relaxation.
\end{remark}

An interesting observation of \eqref{E-rFCF-nl2-evaluated-eqn} is the fact that the coarse-grid error propagator
is nilpotent and that each block can be diagonalized by the same unitary transformation.
This implies that we can re-order the rows and columns of the coarse-grid error propagator,
yielding a block diagonal form with lower triangular nilpotent blocks.

\begin{lemma}
    Let $\{\Phi_\ell\}$ be simultaneously diagonalizable
    by the same unitary transformation,
    with eigenvalues $\{\lambda_{\ell,k}\}$, such that $| \lambda_{\ell,k} | < 1$.
    Then, the $\ell^2$-norm of the coarse-grid error propagator of two-level MGRIT with $r$FCF-relaxation
    can be computed as,
        \begin{equation}
        \| \mathcal{E}_{rFCF}^{n_\ell = 2, \Delta} \|_2
        = \sup_{1 \leq k \leq N_x} \| \mathcal{\tilde E}_{rFCF}^{n_\ell = 2, \Delta} (k) \|_2,
    \end{equation}
        with the coarse-grid error propagator $\mathcal{\tilde E}_{rFCF}^{n_\ell = 2,\Delta} (k)$ for a single spatial mode $k$
    with $1 \leq k \leq N_x$.
    \label{individual-modes-lem}
    \begin{proof}
        This follows from the discussion above and the fact that the spectral norm of a block diagonal operator
        with lower triangular blocks can be computed as the supremum of the spectral norm of all lower triangular
        blocks. See also \cite{DobrevKolevPeterssonSchroder2017}, Remark 3.1.
    \end{proof}
\end{lemma}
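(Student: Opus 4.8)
The plan is to exploit the simultaneous unitary diagonalization of $\{\Phi_\ell\}$ to block-diagonalize the coarse-grid error propagator \eqref{E-rFCF-nl2-evaluated-eqn} one spatial mode at a time, and then invoke the standard fact that the spectral norm of a block-diagonal operator is the maximum of the spectral norms of its diagonal blocks.

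First I would observe that every nonzero block of $\mathcal{E}_{rFCF}^{n_\ell = 2, \Delta}$ in \eqref{E-rFCF-nl2-evaluated-eqn} is a polynomial in $\Phi_0$ and $\Phi_1$ of the form $\Phi_1^{j}(\Phi_0^{m_0} - \Phi_1)\Phi_0^{r m_0}$. Writing $\Phi_\ell = U \Lambda_\ell U^*$ with $U$ unitary (the hypothesis), each such block equals $U\, p_j(\Lambda_0,\Lambda_1)\, U^*$, where $p_j(\Lambda_0,\Lambda_1)$ is the diagonal matrix with $k$-th entry $\lambda_{1,k}^{j}(\lambda_{0,k}^{m_0} - \lambda_{1,k})\lambda_{0,k}^{r m_0}$. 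Conjugating the full block operator by the block-diagonal unitary $\widetilde U = \mathrm{diag}(U,\ldots,U)$ therefore produces a matrix with the identical block-lower-triangular sparsity pattern, but in which every block is now \emph{diagonal}. Since $\widetilde U$ is unitary, this conjugation leaves the $\ell^2$-norm unchanged, consistent with \eqref{eq:u*u}.

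Next I would introduce a permutation matrix $P$ that reorders rows and columns so as to collect, for each fixed mode $k$, the $k$-th diagonal entries of all blocks into a single contiguous index set. Because each block of $\widetilde U^* \mathcal{E}_{rFCF}^{n_\ell = 2, \Delta} \widetilde U$ is diagonal, no cross-mode coupling survives, and $P^T (\widetilde U^* \mathcal{E}_{rFCF}^{n_\ell = 2, \Delta} \widetilde U) P$ becomes block diagonal with $N_x$ diagonal blocks. The $k$-th block is exactly the scalar, lower-triangular, nilpotent operator $\mathcal{\tilde E}_{rFCF}^{n_\ell = 2, \Delta}(k)$ obtained by replacing each $\Phi_\ell$ in \eqref{E-rFCF-nl2-evaluated-eqn} by the eigenvalue $\lambda_{\ell,k}$. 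As $P$ is also unitary, the $\ell^2$-norm is again preserved.

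Finally I would appeal to the fact that the spectral norm of a block-diagonal matrix equals the maximum of the spectral norms of its diagonal blocks (as in \cite{DobrevKolevPeterssonSchroder2017}, Remark 3.1), giving $\|\mathcal{E}_{rFCF}^{n_\ell = 2, \Delta}\|_2 = \sup_{1 \le k \le N_x} \|\mathcal{\tilde E}_{rFCF}^{n_\ell = 2, \Delta}(k)\|_2$. The only slightly delicate point, and the main bookkeeping obstacle, is constructing the permutation $P$ explicitly and verifying that it simultaneously untangles all blocks into mode-indexed diagonal blocks; once the block entries are known to be diagonal, this is purely combinatorial and no genuine analytic difficulty remains.
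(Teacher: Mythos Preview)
Your proposal is correct and follows essentially the same approach as the paper: the paper's proof simply invokes the preceding observation that each block of \eqref{E-rFCF-nl2-evaluated-eqn} is diagonalized by the same unitary transformation, so that a reordering of rows and columns yields a block-diagonal operator with lower-triangular nilpotent blocks, and then uses that the spectral norm of a block-diagonal matrix is the supremum of the block norms. You have merely spelled out the conjugation by $\widetilde U$ and the permutation $P$ that the paper leaves implicit.
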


\begin{remark}
    Lemma \ref{individual-modes-lem} implies that computing a bound of the form of
    \eqref{error-l2-upper-bound-eqn} can be parallelized over the number of spatial modes.
    Thus, the time complexity of evaluating \eqref{error-l2-upper-bound-eqn} is $O(N_x N_1^3 / p)$
    with $1 \leq p \leq N_x$ parallel processors.
\end{remark}

\begin{remark}
    Lemma \ref{individual-modes-lem} formalizes and generalizes the discussion
    for two-level MGRIT with F- and FCF-relaxation in \cite[Section 4.2]{FriedhoffMaclachlan2015}.
\end{remark}

\begin{remark}
    The result in Lemma \ref{individual-modes-lem} is not limited to $n_\ell = 2$ and can be applied
    to all subsequent convergence results.
\end{remark}
\subsubsection{Three-level V-cycles with F-relaxation}\label{three-level-V-cycles-with-F-relaxation-sec}

Evaluating the error propagator in Equation \eqref{E-F-nl-eqn} for a three-level V-cycle with F-relaxation
on level $1$ (see Equation \eqref{E-F-3-level-evaluated-eq}) and comparison
with the two-level error propagator for F-relaxation in \cite{DobrevKolevPeterssonSchroder2017}
highlights a slight complication:
In general, the maximum absolute column sum (and similarly, for the maximum absolute row sum) is no longer
given by the first column.\footnote{Note, that additional relaxation steps did not break symmetry
of $\mathcal{E}_{rFCF}^{n_l = 2,\Delta}$ in Equation \eqref{E-rFCF-nl2-evaluated-eqn}.}
Instead, the maximum absolute column sum is given by the maximum of the first $m_1$ absolute column sums,
corresponding to the first CF-interval (first C-point and first $m_1 - 1$ F-points) on level $1$.
This structure arises because of the recursive partitioning of time points into F- and C-points on each level.
\begin{theorem}
    Let $\{ \lambda_{\ell,k} \}$ be the eigenvalues of $\{ \Phi_\ell \}$.
    Then, the \emph{worst case} convergence factor of three-level MGRIT with F-relaxation
    is bounded by
    \begin{equation}
        c_f \leq \sqrt{\| \mathcal{E}_{F}^{n_\ell = 3, \Delta} \|_1 \| \mathcal{E}_{F}^{n_\ell = 3, \Delta} \|_\infty }.
        \label{E-F-nl3-inequality-eqn}
    \end{equation}
    and $\| \mathcal{E}_{F}^{n_\ell = 3, \Delta} \|_1$ and $\| \mathcal{E}_{F}^{n_\ell = 3, \Delta} \|_\infty$
    are given analytically as,
    \begin{alignat}{4}
        \| \mathcal{E}_{F}^{n_\ell = 3, \Delta} \|_1
        &=
\resizebox{0.7\hsize}{!}{$
        \max_{1 \leq k \leq N_x} \begin{cases}
            | \lambda_{2,k} - \lambda_{0,k}^{m_0} \lambda_{1,k}^{m_1 - 1} |
            \left(
            | \lambda_{2,k} |^{N_2 - 2}
            + \frac{1 - | \lambda_{2,k} |^{N_2 - 2}}{1 - | \lambda_{2,k} |}
            \frac{1 - | \lambda_{1,k} |^{m_1}}{1 - | \lambda_{1,k} |}
            \right) \\
            \qquad + | \lambda_{1,k} - \lambda_{0,k}^{m_0} | \frac{1 - | \lambda_{1,k} |^{m_1 - 1}}{1 - | \lambda_{1,k} |}    \\[1ex]\hdashline\vspace{-0.3cm}\\
            | \lambda_{1,k} |^{j - 1}
            | \lambda_{1,k} - \lambda_{0,k}^{m_0} |
            \left[
            | \lambda_{2,k} |^{N_2 - 2}
            + \frac{1 - | \lambda_{2,k} |^{N_2 - 2}}{1 - | \lambda_{2,k} |}
            \frac{1 - | \lambda_{1,k} |^{m_1}}{1 - | \lambda_{1,k} |}
            \right] \\
            \qquad + | \lambda_{1,k} - \lambda_{0,k}^{m_0} | \frac{1 - | \lambda_{1,k} |^{m_1 - 2}}{1 - | \lambda_{1,k} |} \qquad \text{for } j = 1, \ldots, m_1 - 1,
        \end{cases}
        $}
    \end{alignat}
    and
    \begin{alignat}{4}
        \| \mathcal{E}_{F}^{n_\ell = 3, \Delta} \|_\infty
        &=
\resizebox{0.7\hsize}{!}{$
             \max_{1 \leq k \leq N_x} \begin{cases}
            | \lambda_{2,k} - \lambda_{0,k}^{m_0} \lambda_{1,k}^{m_1 - 1} |
            \frac{1 - | \lambda_{2,k} |^{N_2 - 1}}{1 - | \lambda_{2,k} |}
            \\
            \qquad + | \lambda_{1,k} - \lambda_{0,k}^{m_0} |
            \frac{1 - | \lambda_{2,k} |^{N_2 - 1}}{1 - | \lambda_{2,k} |}
            \frac{1 - | \lambda_{1,k} |^{m_1 - 1}}{1 - | \lambda_{1,k} |}
                \\[1ex]\hdashline\vspace{-0.3cm}\\
            | \lambda_{1,k} - \lambda_{0,k}^{m_0} |
            \frac{1 - | \lambda_{1,k} |^j}{1 - | \lambda_{1,k} |}
            + | \lambda_{1,k} |^j
            \frac{1 - | \lambda_{2,k} |^{N_2 - 2}}{1 - | \lambda_{2,k} |}
            | \lambda_{2,k} - \lambda_{0,k}^{m_0} \lambda_{1,k}^{m_1 - 1} | \\
            \qquad + | \lambda_{1,k} |^j
            \frac{1 - | \lambda_{2,k} |^{N_2 - 2}}{1 - | \lambda_{2,k} |}
            | \lambda_{1,k} - \lambda_{0,k}^{m_0} |
            \frac{1 - | \lambda_{1,k} |^{m_1 - 1}}{1 - | \lambda_{1,k} |}
            \qquad \text{for } j = 1, \ldots, m_1 - 1.
        \end{cases}
        $}
    \end{alignat}
        \label{cf-3-level-F-theo}
\end{theorem}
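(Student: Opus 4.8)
The plan is to start from the multilevel F-relaxation error propagator of Lemma \ref{E-F-nl-theo}, specialized to $n_\ell = 3$, and reduce it to the coarse-grid propagator $\mathcal{E}_F^{n_\ell=3,\Delta} = R_{I_0}\mathcal{E}_F^{n_\ell=3}P_0$, whose explicit diagonalized form is recorded in Equation \eqref{E-F-3-level-evaluated-eq}. Under the simultaneous-diagonalization assumption each block of this operator is a scalar polynomial in the eigenvalues $\lambda_{0,k},\lambda_{1,k},\lambda_{2,k}$, so by the mode-decoupling of Lemma \ref{individual-modes-lem} it suffices to bound the per-mode scalar matrix and then maximize over $1 \le k \le N_x$. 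Applying the inequality \eqref{inequality-eq}, $\|\mathcal{E}_F^{n_\ell=3,\Delta}\|_2 \le \sqrt{\|\mathcal{E}_F^{n_\ell=3,\Delta}\|_1\,\|\mathcal{E}_F^{n_\ell=3,\Delta}\|_\infty}$, then reduces the problem to computing a maximum absolute column sum and a maximum absolute row sum, which I would carry out entry-by-entry for a fixed mode.

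The central structural observation, which I would establish first, is that the two nontrivial contributions to the propagator in \eqref{E-F-nl-eqn} populate the columns with two qualitatively different factors: the level-$2$ coarse-grid-correction term $P_0P_1A_2^{-1}R_1R_0$ produces entries carrying the factor $\lambda_{2,k}-\lambda_{0,k}^{m_0}\lambda_{1,k}^{m_1-1}$ together with powers of $\lambda_{2,k}$, while the intermediate F-relaxation term $P_0S_1(S_1^TA_1S_1)^{-1}S_1^T R_0$ contributes entries carrying $\lambda_{1,k}-\lambda_{0,k}^{m_0}$ with powers of $\lambda_{1,k}$. Because the level-$1$ time points are themselves partitioned into C-points (which are the level-$2$ points) and F-points, the column index position within a CF-interval of width $m_1$ determines which of these factors is present and with what power. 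I would therefore split the column-sum computation into the two cases stated in the theorem: the C-point column heading a CF-interval (the first listed case), and the $j$-th F-point column for $j=1,\ldots,m_1-1$ (the second case).

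For each fixed mode and each column type I would sum the absolute values of the nonzero entries down the column, repeatedly using $\sum_{i=0}^{N-1}a^i = (1-a^N)/(1-a)$ for $|a|<1$; the nested C/F structure produces the product of a level-$2$ geometric sum (in $|\lambda_{2,k}|$, giving the $(1-|\lambda_{2,k}|^{N_2-2})/(1-|\lambda_{2,k}|)$ factors) and a level-$1$ geometric sum (in $|\lambda_{1,k}|$, giving the $(1-|\lambda_{1,k}|^{m_1})/(1-|\lambda_{1,k}|)$ and $(1-|\lambda_{1,k}|^{m_1-1})/(1-|\lambda_{1,k}|)$ factors). The maximum-row-sum ($\infty$-norm) computation is entirely analogous, summing across rows rather than down columns, and likewise splits into a first-row case and a general $j$-th-row case, with the differing exponents ($N_2-1$ versus $N_2-2$, and the $m_1-1$ upper limits) arising from the shifted index ranges encountered when reading the operator along rows instead of columns. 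Finally, I would argue that the maximum over all columns (respectively rows) is attained within the first CF-interval, since the entries in later intervals differ only by additional positive powers of $|\lambda_{1,k}|<1$ and are therefore dominated, so it suffices to maximize over the $m_1$ column (row) positions exhibited in the cases.

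The hard part will be the bookkeeping of this two-scale index set: correctly enumerating which blocks are nonzero in each column and row of $\mathcal{E}_F^{n_\ell=3,\Delta}$, tracking the exact powers of each eigenvalue attached to each block as a function of its position within and across CF-intervals, and pinning down the summation limits that distinguish the four branches of the two formulas. The structural non-monotonicity noted just before the theorem---that the maximal column sum need not lie in the first column, only somewhere within the first CF-interval---is precisely what forces the case split, and is where the argument must be made carefully rather than by direct appeal to the simpler two-level pattern of \eqref{E-rFCF-nl2-evaluated-eqn}.
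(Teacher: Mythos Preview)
Your proposal is correct and follows essentially the same approach as the paper, which simply states that the proof is analogous to Theorem~\ref{cf-2-level-rFCF-theo}: read off the explicit block form of $\mathcal{E}_F^{n_\ell=3,\Delta}$ from \eqref{E-F-3-level-evaluated-eq}, apply the inequality \eqref{inequality-eq}, and evaluate the $1$- and $\infty$-norms by summing geometric series column- and row-wise over the first CF-interval. One small slip: for the $\infty$-norm the maximal absolute row sum is attained in the \emph{last} FC-interval (the bottom rows of the strictly lower-triangular operator have the most nonzero entries), not the first---this is consistent with the exponents $N_2-1$ and $N_2-2$ appearing in the row-sum formulas and is stated explicitly in the four-level analogue, Theorem~\ref{cf-4-level-F-theo}.
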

\begin{proof}    The proof is analogous to Theorem \ref{cf-2-level-rFCF-theo}.
\end{proof}

The benefit of Theorem \ref{cf-3-level-F-theo} is that evaluating the $2 m_1$ analytic formulae
is significantly cheaper than constructing $\mathcal{E}_F^{n_\ell = 3, \Delta}$ numerically and
directly computing $\| \mathcal{E}_F^{n_\ell = 3, \Delta} \|_1$ and $\| \mathcal{E}_F^{n_\ell = 3, \Delta} \|_\infty$.
    \label{time-complexity-analytic-formulae-three-level-V-F-relax-remark}

\subsubsection{Analytic formulae for other cases}\label{analytic-formulae-for-other-cases-sec}

Analogous to Section \ref{two-level-MGRIT-with-rFCF-relaxation-sec}
and Section \ref{three-level-V-cycles-with-F-relaxation-sec},
analytic formulae for a four-level V-cycle with F-relaxation
and a three-level V-cycle with FCF-relaxation are derived
in Section \ref{four-level-V-cycles-with-F-relaxation-appsec}
and \ref{four-level-V-cycles-with-F-relaxation-appsec}.

\subsection{Approximate convergence factor of multilevel V-cycle algorithm}
\label{approximate-convergence-factor-of-multilevel-MGRIT-sec}

Section \ref{upper-bound-using-inequality-sec} presented analytic formulae for the inequality
bound \eqref{inequality-eq} as the maximum of a certain function over eigenvalues of $\{\Phi_\ell\}$.
These \emph{a priori} convergence bounds reduce memory consumption and computational cost significantly.
It is, however, increasingly difficult to derive such analytic formulae for larger numbers of levels.
Here, we propose an analytic approximate convergence factor for multilevel V-cycles with F- and FCF-relaxation,
as a function of eigenvalues of the time stepping operators $\{\lambda_{\ell,k}\}$, number of time points $\{N_\ell\}$,
and temporal coarsening factors for each level, $\{m_\ell\}$.
This yields approximate \emph{a priori} convergence factors with linear memory
and time complexity.\footnote{The generalization of Lemma \ref{individual-modes-lem} implies
that time complexity is in fact $O(\nicefrac{N_x}{p})$ with $1 \leq p \leq N_x$ parallel processors.}

The proposed approximate convergence factors are based on approximating the inequality bound
\eqref{inequality-eq}, and therefore, are expected to be a conservative upper bound in a large number of
cases. More specifically, in the case of multilevel V-cycles with F-relaxation
the approximate convergence factor is derived by identifying the recursive structure
in the analytic formulae for two, three and four levels (see \eqref{E-rFCF-nl2-inequality-eqn},
\eqref{E-F-nl3-inequality-eqn}, and \eqref{E-F-nl4-inequality-eqn}) and estimating how this recursion
continues for $n_l > 4$ levels (and similarly for FCF-relaxation with \eqref{E-rFCF-nl2-inequality-eqn}
and \eqref{E-FCF-nl3-inequality-eqn}).

First, we present the approximate convergence factor for multilevel V-cycles with F-relaxation.
\begin{approximaterate}
        Let $\{ \lambda_{\ell,k} \}$ be the eigenvalues of $\{ \Phi_\ell \}$.
    Then, an approximate worst-case convergence factor of multilevel MGRIT V-cycles with
    F-relaxation is given by
        \begin{alignat}{4}
        \tilde{c}_{f,F}
        &:= \max_{1 \leq k \leq N_x} \sqrt{ s_0^{\text{row}} (k, n_\ell) ~ s_{N_1 - 1}^{\text{col}} (k, n_\ell) }
        \approx \sqrt{\| \mathcal{E}_{F}^{n_\ell, \Delta} \|_1 \| \mathcal{E}_{F}^{n_\ell, \Delta} \|_\infty },
        \label{E-F-nl-approximate-inequality-eqn}
    \end{alignat}
        with approximate maximum absolute column and row sums
        \begin{alignat*}{4}
        &\quad s_0^{\text{col}} (k, n_\ell) \approx \sum_{\ell = 1}^{n_\ell - 1}
        \left| \lambda_{\ell,k} - \lambda_{0,k}^{m_0} \left( \prod_{p = 1}^{l - 1} \lambda_{p,k}^{\tilde{m}_p - 1} \right) \right|
        \left( \prod_{q = 1}^l \frac{1 - | \lambda_{q,k} |^{\tilde{m}_q - 1}}{1 - | \lambda_{q,k} |} \right) \numberthis \\
        &\qquad + (n_\ell > 2)
        \cdot | \lambda_{n_\ell - 1,k} |^{\tilde{m}_{n_\ell - 1} - 1}
        \left| \lambda_{n_\ell - 1,k} - \lambda_{0,k}^{m_0} \left( \prod_{p = 1}^{n_\ell - 2} \lambda_{p,k}^{\tilde{m}_p - 1} \right) \right|,    \\
        &s_{N_1 - 1}^{\text{row}} (k, n_\ell) \approx\sum_{\ell = 1}^{n_\ell - 1}
        \left| \lambda_{\ell,k} - \lambda_{0,k}^{m_0} \left( \prod_{p = 1}^{l - 1} \lambda_{p,k}^{\tilde{m}_p - 1} \right) \right|
        \left( \prod_{q = l}^{n_\ell - 1} \frac{1 - | \lambda_{q,k} |^{\tilde{m}_q}}{1 - | \lambda_{q,k} |} \right), \numberthis
    \end{alignat*}
        for $\tilde{m}_\ell = [m_0, \ldots, m_{n_\ell - 2}, N_{n_\ell - 1} - 1]^T$.  In many cases,
    $\| \mathcal{E}_{F}^{n_\ell, \Delta} \|_2 \leq \tilde{c}_{f,F}$
    because $\tilde{c}_{f,F}$ directly approximate an upper bound on $\| \mathcal{E}_{F}^{n_\ell, \Delta} \|_2$ \eqref{inequality-eq}.
        \label{cf-nl-F-app}
\end{approximaterate}
A similar result can be formulated for multilevel V-cycles with FCF-relaxation.
\begin{approximaterate}
        Let $\{ \lambda_{\ell,k} \}$ be the eigenvalues of $\{ \Phi_\ell \}$.
        Then, an approximate worst-case convergence factor of multilevel MGRIT V-cycles with FCF-relaxation is given by
        \begin{alignat}{4}
        \tilde{c}_{f,FCF}
        &:= \max_{1 \leq k \leq N_x} \sqrt{ s_0^{\text{row}} (k, n_\ell) ~ s_{N_1 - 1}^{\text{col}} (k, n_\ell) }
        \approx \sqrt{\| \mathcal{E}_{FCF}^{n_\ell, \Delta} \|_1 \| \mathcal{E}_{FCF}^{n_\ell, \Delta} \|_\infty },
        \label{E-FCF-nl-approximate-inequality-eqn}
    \end{alignat}
        with approximate maximum absolute column and row sum,
        \begin{alignat*}{4}
        &s_0^{\text{col}} (k, n_\ell)
        \approx (n_\ell > 2)
        \cdot | \lambda_{0,k} |^{m_0}
        | \lambda_{1,k} - \lambda_{0,k}^{m_0} |
        \frac{1 - | \lambda_{1,k} |^{m_1}}{1 - | \lambda_{1,k} |}    \\
        &\quad
\resizebox{0.95\hsize}{!}{$
        + \frac{1}{n_\ell - 1}
        | \lambda_{0,k} |^{m_0}
        \left[
        \sum_{p = 2}^{n_\ell - 2}
        \left( \prod_{j = 1}^{p - 1} | \lambda_{j,k} | \right)
        \left| \lambda_{p,k} - \lambda_{0,k}^{m_0} \left( \prod_{j = 1}^{p - 1} \lambda_{j,k}^{m_j - 1} \right) \right|
        \left( \prod_{j = 1}^p \frac{1 - | \lambda_{j,k} |^{m_j}}{1 - | \lambda_{j,k} |} \right)
        \right] $}    \\
        &\quad
\resizebox{0.95\hsize}{!}{$
        + \frac{1}{n_\ell - 1}
        \frac{1 - | \lambda_{n_\ell-1,k} |^{N_{n_\ell-1} - 1}}{1 - | \lambda_{n_\ell-1,k} |}
        | \lambda_{0,k} |
        \left( \prod_{j = 0}^{n_\ell - 2} | \lambda_{j,k} |^{m_j - 1} \right)
        \left( \prod_{j = 1}^{n_\ell - 2} \frac{1 - | \lambda_{j,k} |^{m_j}}{1 - | \lambda_{j,k} |} \right)
        | \lambda_{1,k} - \lambda_{0,k}^{m_0} | $}    \\
        &\quad
\resizebox{0.95\hsize}{!}{$
        + \frac{1}{n_\ell - 1}
        \frac{1 - | \lambda_{n_\ell-1,k} |^{N_{n_\ell-1} - 1}}{1 - | \lambda_{n_\ell-1,k} |}
        | \lambda_{0,k} |^{m_0}
        \left( \prod_{j = 1}^{n_\ell - 2} | \lambda_{j,k} | \right)
        \left( \sum_{p = 2}^{n_\ell - 1} \left| \lambda_{p,k} - \lambda_{0,k}^{m_0} \left( \prod_{j = 1}^{p - 1} \lambda_{j,k}^{m_j - 1} \right) \right| \right)
        \left( \prod_{j = 1}^{n_\ell - 2} \frac{1 - | \lambda_{j,k} |^{m_j}}{1 - | \lambda_{j,k} |} \right)$},    \\
                &
\resizebox{0.975\hsize}{!}{$
        s_{N_1 - 1}^{\text{row}} (k, n_\ell)
        \approx
        | \lambda_{0,k} |^{m_0}
        \frac{1 - | \lambda_{n_\ell-1,k} |^{N_{n_\ell-1} - 1}}{1 - | \lambda_{n_\ell-1,k} |}
        \left[
        \sum_{p = 1}^{n_\ell - 1}
        \left( \prod_{j = 1}^{p - 1} | \lambda_{j,k} | \right)
        \left| \lambda_{p,k} - \lambda_{0,k}^{m_0} \left( \prod_{j = 1}^{p - 1} \lambda_{j,k}^{m_j - 1} \right) \right|
        \left( \prod_{j = p}^{n_\ell - 2} \frac{1 - | \lambda_{j,k} |^{m_j}}{1 - | \lambda_{j,k} |} \right)
        \right]$}.
    \end{alignat*}
         In many cases,
    $\| \mathcal{E}_{FCF}^{n_\ell, \Delta} \|_2 \leq \tilde{c}_{f,FCF}$,  because $\tilde{c}_{f,FCF}$ directly approximates
    an upper bound on $\| \mathcal{E}_{FCF}^{n_\ell, \Delta} \|_2$ \eqref{inequality-eq}.
       \label{cf-nl-FCF-app}
\end{approximaterate}
\section{Numerical results}\label{numerical-results-sec}
All numerical, analytic and approximate bounds on convergence from Section \ref{bounds-MGRIT-error-propagation-sec}
are implemented in MPI/C++,\footnote{Github repository: \url{github.com/XBraid/XBraid-convergence-est}.
For more details, see Supplementary Materials \ref{code-suppsec}.}
using the open-source library Armadillo \cite{SandersonCurtin2016,SandersonCurtin2018}.
In this section, we evaluate these bounds for various model problems.
Analytic formulae, e.g., for the inequality bound \eqref{inequality-eq}
are employed whenever available:
for example, for a two-, three- and four-level V-cycle with F-relaxation,
we evaluate the analytic formulae derived in Section \ref{upper-bound-using-inequality-sec},
while for more than four levels, we construct the
error propagator numerically
and directly compute its $1$-/$\infty$-norm bounds.

This section assesses how sharp the various upper bounds are and how much sharpness is
sacrificed by employing a bound that is cheaper to compute numerically. For all results, we consider
Runge-Kutta time-integration schemes \cite{HairerNorsettWanner1993,HairerWanner1996}
of orders 1-4 (Butcher tableaux provided in \ref{butcher-tableaux-appsec}).
In \cite{DobrevKolevPeterssonSchroder2017}, it was noted that in the two-level setting,
L-stable schemes seem to be better suited for parallel-in-time integration than A-stable schemes.
Here, we review this observation in the multilevel setting. We further investigate the difference between
V- and F-cycle convergence, as well as the effect of F- and FCF-relaxation.

For all cases, the number of time grids varies between two and six levels.
The fine grid is composed of $N_0 = 1025$ time points and the temporal coarsening factor is $m_\ell = 2$
between all levels. The spatial domain is two-dimensional and discretized using $11$ nodes in each coordinate
direction (grid spacing $\delta_x$).
Derived bounds and approximate convergence factors are compared with the maximum observed convergence factor
in numerical simulations, in terms of the $\ell^2$-norm of the residual (see
Equation \eqref{r-convergence-factor-eq}),
\begin{equation}
    \max_i \| \mathbf{r}_{i+1} \|_2 / \| \mathbf{r}_i \|_2.
\end{equation}
All test cases {are} implemented in MPI/C++, using the open-source libraries
Armadillo \cite{SandersonCurtin2016,SandersonCurtin2018} and XBraid \cite{XBraid}.
The absolute stopping tolerance for MGRIT is selected as $\| \mathbf{r}_{i} \|_2 < 10^{-11}$
and the initial global space-time guess is random.
\subsection{Diffusion equation}\label{diffusion-equation-sec}

Consider the general time-dependent diffusion equation in two spatial dimensions over domain
$\mathbf{x}\in\Omega = (0, 2\pi) \times (0, 2\pi)$,
\begin{align*}
    \partial_t u &= \nabla \cdot \left[ K \nabla u \right] \qquad
    \text{for } \mathbf{x} \in \Omega,~t \in (0, 2\pi],
    \label{anisotropic-diffusion-eqn}
\end{align*}
with homogeneous boundary and discontinuous initial condition (see Figure \ref{diffusion-wave-ic-suppfig}),
\begin{align*}
    u (\mathbf{x}, \cdot) &= 0 \qquad\qquad~ \text{for } \mathbf{x} \in \partial\Omega,
    \\
    u (\cdot, 0) &=
\resizebox{0.8\hsize}{!}{$
        1 - \max{
        \left\{
        \text{sign}{
        \left(
        \left(
        4 - (x_1 - \pi + 1)^2 - 4 (x_2 - \pi)^2
        \right)^2
        + 1.2 (1 + \pi - x_1)^3
        - 10
        \right)}
        , 1
        \right\}}$}
        \\
        &\qquad\qquad\qquad \text{for } \mathbf{x} \in \Omega \cup \partial \Omega, \nonumber
\end{align*}
for a scalar solution $u(\mathbf{x},t)$ and boundary $\partial \Omega$.
Here, $K = \text{diag}(k_1, k_2) = \text{const}$ is the grid-aligned conductivity tensor. If
$k_1 = k_2$, the problem is isotropic, while if $k_1 \ll k_2$ or $k_2\ll k_1$, the problem is anisotropic.
The spatial problem is discretized using second-order centered finite differences, in which case the time-stepping
operators $\Phi_\ell$ are unitarily diagonalizable.
\subsubsection{Isotropic diffusion}\label{isotropic-diffusion-sec}
First, we consider the isotropic case with $k_1 = k_2 = 10$.
The CFL number on each level,
\begin{equation*}
    \text{CFL}_\ell
    = 2 \pi / (N_\ell - 1) \left(k_1 / \delta_x^2 + k_2 / \delta_x^2 \right)
    = 4 \pi k_1 / [ \delta_x^2 (N_\ell - 1) ],
    \label{isotropic-diffusion-CFL-eq}
\end{equation*}
ranges between $\text{CFL}_0 \approx 0.376$ on level $0$ and $\text{CFL}_5 \approx 12.036$ on level $5$.
Results for F-relaxation are shown in Figure \ref{diffusion-isotropic-V-F-cycle-nt1024-F-relaxation-nn11x11-fig}
and FCF relaxation in Figure \ref{diffusion-isotropic-V-F-cycle-nt1024-FCF-relaxation-nn11x11-fig} (note the
difference in y-axis limits; results for SDIRK3 can be found in Supplementary
Figures \ref{diffusion-isotropic-V-F-cycle-nt1024-F-relaxation-nn11x11-suppfig}
and \ref{diffusion-isotropic-V-F-cycle-nt1024-FCF-relaxation-nn11x11-suppfig}).

In the case of F-relaxation, there is a considerable difference in convergence behavior between the A-stable and
L-stable Runge-Kutta schemes.
For A-stable schemes, convergence of MGRIT deteriorates with a growing number of time grid levels,
which corresponds to a growing CFL number on the coarse grid, and eventually diverges.
On the other hand, L-stable schemes show a less dramatic increase in the convergence factor.
In fact, the estimated and observed convergence factors plateau for V-cycle algorithms with L-stable time integration.
For F-cycle algorithms with F-relaxation and L-stable schemes,
observed convergence is flat for all considered time grid hierarchies and only
a slight increase can be observed in the upper bound values and approximate convergence factor.

In the case of FCF-relaxation,
all observed convergence factors for SDIRK orders 2-4 are constant with respect to number of levels,
and only a slight increase in convergence factor occurs for SDIRK1.
FCF-relaxation was shown to be a critical ingredient for a scalable multilevel solver
in \cite{FalgoutFriedhoffKolevMaclachlanSchroder2014}.
An important observation for F-cycle convergence is that all upper bounds predict constant convergence factors,
suggesting that an MGRIT algorithm with F-cycles and FCF-relaxation yields a robust and scalable multilevel solver
for the isotropic diffusion equation.

In general, all upper bounds and approximate convergence factors provide good qualitative a priori estimates
of the observed convergence. These estimates become less sharp for larger numbers of time grid levels, but
the estimates do appear to be robust across changes in time integration order. Furthermore, note that Approximation
\ref{cf-nl-F-app} and Approximation \ref{cf-nl-FCF-app} estimate observed convergence as well or better than more
expensive upper bounds, demonstrating their applicability and efficacy.
Overall, results in this section demonstrate that theoretical results presented in this work
provide a valuable tool for designing robust and scalable multilevel solvers.
It further provides guidance to avoid less optimal parameter choices for MGRIT, such as F-relaxation with
A-stable RK schemes.
\FloatBarrier
\begin{figure}[h!]
    \centering
    \hspace{-1.75cm}
    \setlength{\figurewidth}{0.3\linewidth}
    \setlength{\figureheight}{0.25\linewidth}
    \begin{subfigure}[b]{0.4\textwidth}
        \raisebox{0.5cm}{\definecolor{mycolor1}{rgb}{0.00000,0.44700,0.74100}\definecolor{mycolor2}{rgb}{0.85000,0.32500,0.09800}\definecolor{mycolor3}{rgb}{0.92900,0.69400,0.12500}\definecolor{mycolor4}{rgb}{0.49400,0.18400,0.55600}\begin{tikzpicture}

\begin{axis}[width=\figurewidth,
height=0.8\figureheight,
at={(0\figurewidth,0\figureheight)},
hide axis,
unbounded coords=jump,
xmin=2,
xmax=6,
ymin=0,
ymax=1,
axis background/.style={fill=white},
title style={font=\bfseries},
legend style={at={(-0.03,0.5)}, anchor=center, legend cell align=left, align=left, fill=none, draw=none}
]

\addlegendimage{color=mycolor1, line width=1pt, mark size=1.5pt, mark=*}
\addlegendimage{color=mycolor2, line width=1pt, mark size=1.5pt, mark=*}
\addlegendimage{color=mycolor3, line width=1pt, mark size=1.5pt, mark=*}
\addlegendimage{color=mycolor4, line width=1.5pt, mark size=2.5pt, dotted, mark=x}
\addlegendimage{color=mycolor1, line width=1pt, dashed, mark size=1.5pt, mark=square, mark options={solid, mycolor1}}
\addlegendimage{color=mycolor2, line width=1pt, dashed, mark size=1.5pt, mark=square, mark options={solid, mycolor2}}
\addlegendimage{color=mycolor3, line width=1pt, dashed, mark size=1.5pt, mark=square, mark options={solid, mycolor3}}
\addlegendimage{color=mycolor4, line width=1pt, dashed, mark size=1.5pt, mark=square, mark options={solid, mycolor4}}

\addlegendentry{~$\max_i \|\mathbf{r}_{i+1}\|_2 / \|\mathbf{r}_i\|_2$ (V-cycle)}
\addlegendentry{~$\| \mathcal{E}_{F}^{n_\ell,\Delta} \|_2$}
\addlegendentry{~$\sqrt{ \| \mathcal{E}_{F}^{n_\ell,\Delta} \|_1 \| \mathcal{E}_{F}^{n_\ell,\Delta} \|_\infty }$}
\addlegendentry{~Approximation \ref{cf-nl-F-app}}
\addlegendentry{~$\max_i \|\mathbf{r}_{i+1}\|_2 / \|\mathbf{r}_i\|_2$ (F-cycle)}
\addlegendentry{~$\| \mathcal{F}_{F}^{n_\ell,\Delta} \|_2$}
\addlegendentry{~$\sqrt{ \| \mathcal{F}_{F}^{n_\ell,\Delta} \|_1 \| \mathcal{F}_{F}^{n_\ell,\Delta} \|_\infty }$}
\end{axis}
\end{tikzpicture}}
    \end{subfigure}\qquad
    \setlength{\figurewidth}{0.4\linewidth}
    \setlength{\figureheight}{0.25\linewidth}
    \begin{subfigure}[b]{0.4\textwidth}
        \definecolor{mycolor1}{rgb}{0.00000,0.44700,0.74100}\definecolor{mycolor2}{rgb}{0.85000,0.32500,0.09800}\definecolor{mycolor3}{rgb}{0.92900,0.69400,0.12500}\definecolor{mycolor4}{rgb}{0.49400,0.18400,0.55600}\begin{tikzpicture}

\begin{axis}[width=\figurewidth,
height=0.849\figureheight,
at={(0\figurewidth,0\figureheight)},
scale only axis,
unbounded coords=jump,
xmin=2,
xmax=6,
xtick={2, 3, 4, 5, 6},
xlabel style={font=\color{white!15!black}},
xlabel={Number of levels},
ymode=log,
ymin=0.01,
ymax=10,
yminorticks=true,
ylabel style={font=\color{white!15!black}},
ylabel={Convergence factor},
axis background/.style={fill=white},
title style={font=\bfseries},
title={L-stable SDIRK1}
]
\addplot [color=mycolor1, line width=1pt, mark size=1.5pt, mark=*, mark options={solid, mycolor1}, forget plot]
  table[row sep=crcr]{2	0.123253524686953\\
3	0.228102752878238\\
4	0.317447496950488\\
5	0.390266781667733\\
6	0.442051569154287\\
};
\addplot [color=mycolor2, line width=1pt, mark size=1.5pt, mark=*, mark options={solid, mycolor2}, forget plot]
  table[row sep=crcr]{2	0.124992967091733\\
3	0.236426427029217\\
4	0.351157173676124\\
5	0.483889508273665\\
6	0.66029420903876\\
};
\addplot [color=mycolor3, line width=1pt, mark size=1.5pt, mark=*, mark options={solid, mycolor3}, forget plot]
  table[row sep=crcr]{2	0.124994752067944\\
3	0.278825308858026\\
4	0.468880619298315\\
5	0.711764295193257\\
6	1.02963197704418\\
};
\addplot [color=mycolor4, line width=1.5pt, mark size=2.5pt, dotted, mark=x, mark options={solid, mycolor4}, forget plot]
  table[row sep=crcr]{2	0.124994752067944\\
3	0.262722408265537\\
4	0.395028008992868\\
5	0.512692791330555\\
6	0.611656979743394\\
};
\addplot [color=mycolor1, dashed, line width=1pt, mark size=1.5pt, mark=square, mark options={solid, mycolor1}, forget plot]
  table[row sep=crcr]{2	0.123253524686953\\
3	0.114084822202608\\
4	0.110632091474148\\
5	0.109380524808187\\
6	0.109080728067996\\
};
\addplot [color=mycolor2, dashed, line width=1pt, mark size=1.5pt, mark=square, mark options={solid, mycolor2}, forget plot]
  table[row sep=crcr]{2	0.124992967091733\\
3	0.118819269526873\\
4	0.12146057750914\\
5	0.122980219166386\\
6	0.122540102110416\\
};
\addplot [color=mycolor3, dashed, line width=1pt, mark size=1.5pt, mark=square, mark options={solid, mycolor3}, forget plot]
  table[row sep=crcr]{2	0.124994752067944\\
3	0.131639242389536\\
4	0.143329866345795\\
5	0.152265161572694\\
6	0.157884004191816\\
};
\addplot [color=mycolor4, dashed, line width=1pt, mark size=1.5pt, mark=square, mark options={solid, mycolor4}, forget plot]
  table[row sep=crcr]{2	nan\\
3	nan\\
4	nan\\
5	nan\\
6	nan\\
};
\end{axis}
\end{tikzpicture}
    \end{subfigure}\\[1ex]
    \setlength{\figurewidth}{0.4\linewidth}
    \setlength{\figureheight}{0.25\linewidth}
    \begin{subfigure}[b]{0.4\textwidth}
        \definecolor{mycolor1}{rgb}{0.00000,0.44700,0.74100}\definecolor{mycolor2}{rgb}{0.85000,0.32500,0.09800}\definecolor{mycolor3}{rgb}{0.92900,0.69400,0.12500}\definecolor{mycolor4}{rgb}{0.49400,0.18400,0.55600}\begin{tikzpicture}

\begin{axis}[width=\figurewidth,
height=0.847\figureheight,
at={(0\figurewidth,0\figureheight)},
scale only axis,
unbounded coords=jump,
xmin=2,
xmax=6,
xtick={2, 3, 4, 5, 6},
xlabel style={font=\color{white!15!black}},
xlabel={Number of levels},
ymode=log,
ymin=0.01,
ymax=10,
yminorticks=true,
ylabel style={font=\color{white!15!black}},
ylabel={Convergence factor},
axis background/.style={fill=white},
title style={font=\bfseries},
title={A-stable SDIRK2}
]
\addplot [color=mycolor1, line width=1pt, mark size=1.5pt, mark=*, mark options={solid, mycolor1}, forget plot]
  table[row sep=crcr]{2	0.0213752257036644\\
3	0.0321260750426203\\
4	0.199561124237503\\
5	0.822192397772941\\
6	2.27738532316254\\
};
\addplot [color=mycolor2, line width=1pt, mark size=1.5pt, mark=*, mark options={solid, mycolor2}, forget plot]
  table[row sep=crcr]{2	0.023083217742097\\
3	0.0333025198027782\\
4	0.211451763796879\\
5	0.877990651706644\\
6	2.80072293866106\\
};
\addplot [color=mycolor3, line width=1pt, mark size=1.5pt, mark=*, mark options={solid, mycolor3}, forget plot]
  table[row sep=crcr]{2	0.0230832471279075\\
3	0.0405830103844399\\
4	0.245792874052059\\
5	1.14755538987387\\
6	4.17911011401904\\
};
\addplot [color=mycolor4, line width=1.5pt, mark size=2.5pt, dotted, mark=x, mark options={solid, mycolor4}, forget plot]
  table[row sep=crcr]{2	0.0230832471279075\\
3	0.0394799254589041\\
4	0.239182878803504\\
5	1.06648116348769\\
6	3.51508985361788\\
};
\addplot [color=mycolor1, dashed, line width=1pt, mark size=1.5pt, mark=square, mark options={solid, mycolor1}, forget plot]
  table[row sep=crcr]{2	0.0213752257036644\\
3	0.0213528307252153\\
4	0.0199869634255212\\
5	0.207666716394547\\
6	8.85229388008015\\
};
\addplot [color=mycolor2, dashed, line width=1pt, mark size=1.5pt, mark=square, mark options={solid, mycolor2}, forget plot]
  table[row sep=crcr]{2	0.023083217742097\\
3	0.0231001279080235\\
4	0.0267822213517374\\
5	0.339787953703544\\
6	41.8313527968614\\
};
\addplot [color=mycolor3, dashed, line width=1pt, mark size=1.5pt, mark=square, mark options={solid, mycolor3}, forget plot]
  table[row sep=crcr]{2	0.0230832471279075\\
3	0.023116908194671\\
4	0.0334686457132331\\
5	0.552370667518023\\
6	77.1790453044027\\
};
\addplot [color=mycolor4, dashed, line width=1pt, mark size=1.5pt, mark=square, mark options={solid, mycolor4}, forget plot]
  table[row sep=crcr]{2	nan\\
3	nan\\
4	nan\\
5	nan\\
6	nan\\
};
\end{axis}
\end{tikzpicture}
    \end{subfigure}\qquad\qquad\qquad
    \begin{subfigure}[b]{0.4\textwidth}
        \definecolor{mycolor1}{rgb}{0.00000,0.44700,0.74100}\definecolor{mycolor2}{rgb}{0.85000,0.32500,0.09800}\definecolor{mycolor3}{rgb}{0.92900,0.69400,0.12500}\definecolor{mycolor4}{rgb}{0.49400,0.18400,0.55600}\begin{tikzpicture}

\begin{axis}[width=\figurewidth,
height=0.849\figureheight,
at={(0\figurewidth,0\figureheight)},
scale only axis,
unbounded coords=jump,
xmin=2,
xmax=6,
xtick={2, 3, 4, 5, 6},
xlabel style={font=\color{white!15!black}},
xlabel={Number of levels},
ymode=log,
ymin=0.01,
ymax=10,
yminorticks=true,
yticklabels={,,},
ylabel style={font=\color{white!15!black}},
axis background/.style={fill=white},
title style={font=\bfseries},
title={L-stable SDIRK2}
]
\addplot [color=mycolor1, line width=1pt, mark size=1.5pt, mark=*, mark options={solid, mycolor1}, forget plot]
  table[row sep=crcr]{2	0.0695597500143883\\
3	0.166101605307977\\
4	0.250466197399407\\
5	0.296748481511212\\
6	0.316415201063143\\
};
\addplot [color=mycolor2, line width=1pt, mark size=1.5pt, mark=*, mark options={solid, mycolor2}, forget plot]
  table[row sep=crcr]{2	0.0751697083775904\\
3	0.237331843556171\\
4	0.407324979794182\\
5	0.476318491751965\\
6	0.497803221492851\\
};
\addplot [color=mycolor3, line width=1pt, mark size=1.5pt, mark=*, mark options={solid, mycolor3}, forget plot]
  table[row sep=crcr]{2	0.0751697107252279\\
3	0.290884640602893\\
4	0.566517608289803\\
5	0.776523817329218\\
6	0.942444976477013\\
};
\addplot [color=mycolor4, line width=1.5pt, mark size=2.5pt, dotted, mark=x, mark options={solid, mycolor4}, forget plot]
  table[row sep=crcr]{2	0.0751697107252279\\
3	0.290780635159006\\
4	0.559910048670137\\
5	0.755649947142099\\
6	0.879506444311686\\
};
\addplot [color=mycolor1, dashed, line width=1pt, mark size=1.5pt, mark=square, mark options={solid, mycolor1}, forget plot]
  table[row sep=crcr]{2	0.0695597500143883\\
3	0.0800776058363599\\
4	0.0869109428131518\\
5	0.089637988203308\\
6	0.0904504339946201\\
};
\addplot [color=mycolor2, dashed, line width=1pt, mark size=1.5pt, mark=square, mark options={solid, mycolor2}, forget plot]
  table[row sep=crcr]{2	0.0751697083775904\\
3	0.098258504344855\\
4	0.129718233606791\\
5	0.12647989672065\\
6	0.134839247575872\\
};
\addplot [color=mycolor3, dashed, line width=1pt, mark size=1.5pt, mark=square, mark options={solid, mycolor3}, forget plot]
  table[row sep=crcr]{2	0.0751697107252279\\
3	0.120124246346818\\
4	0.174650020373018\\
5	0.203660493368975\\
6	0.227189666593223\\
};
\addplot [color=mycolor4, dashed, line width=1pt, mark size=1.5pt, mark=square, mark options={solid, mycolor4}, forget plot]
  table[row sep=crcr]{2	nan\\
3	nan\\
4	nan\\
5	nan\\
6	nan\\
};
\end{axis}
\end{tikzpicture}
    \end{subfigure}\\[1ex]
                                    \setlength{\figurewidth}{0.4\linewidth}
    \setlength{\figureheight}{0.25\linewidth}
    \begin{subfigure}[b]{0.4\textwidth}
        \definecolor{mycolor1}{rgb}{0.00000,0.44700,0.74100}\definecolor{mycolor2}{rgb}{0.85000,0.32500,0.09800}\definecolor{mycolor3}{rgb}{0.92900,0.69400,0.12500}\definecolor{mycolor4}{rgb}{0.49400,0.18400,0.55600}\begin{tikzpicture}

\begin{axis}[width=\figurewidth,
height=0.849\figureheight,
at={(0\figurewidth,0\figureheight)},
scale only axis,
unbounded coords=jump,
xmin=2,
xmax=6,
xtick={2, 3, 4, 5, 6},
xlabel style={font=\color{white!15!black}},
xlabel={Number of levels},
ymode=log,
ymin=0.001,
ymax=10,
yminorticks=true,
ylabel style={font=\color{white!15!black}},
ylabel={Convergence factor},
axis background/.style={fill=white},
title style={font=\bfseries},
title={A-stable SDIRK4}
]
\addplot [color=mycolor1, line width=1pt, mark size=1.5pt, mark=*, mark options={solid, mycolor1}, forget plot]
  table[row sep=crcr]{2	0.0787840925076033\\
3	0.246993720186949\\
4	0.449463691312231\\
5	0.629737810441511\\
6	0.573880219725353\\
};
\addplot [color=mycolor2, line width=1pt, mark size=1.5pt, mark=*, mark options={solid, mycolor2}, forget plot]
  table[row sep=crcr]{2	0.0863662064165024\\
3	0.361955370234664\\
4	0.951941623895102\\
5	1.84138111224164\\
6	3.01450694726387\\
};
\addplot [color=mycolor3, line width=1pt, mark size=1.5pt, mark=*, mark options={solid, mycolor3}, forget plot]
  table[row sep=crcr]{2	0.0863662204430876\\
3	0.434133577086321\\
4	1.28239022106269\\
5	2.87356686995355\\
6	5.60127890206903\\
};
\addplot [color=mycolor4, line width=1.5pt, mark size=2.5pt, dotted, mark=x, mark options={solid, mycolor4}, forget plot]
  table[row sep=crcr]{2	0.0863662204430876\\
3	0.433141055865876\\
4	1.22810795517032\\
5	2.53316179803951\\
6	4.378645215548\\
};
\addplot [color=mycolor1, dashed, line width=1pt, mark size=1.5pt, mark=square, mark options={solid, mycolor1}, forget plot]
  table[row sep=crcr]{2	0.0787840925076033\\
3	0.099468400897774\\
4	0.555199898539749\\
5	1.10534144924908\\
6	3.1599053123263\\
};
\addplot [color=mycolor2, dashed, line width=1pt, mark size=1.5pt, mark=square, mark options={solid, mycolor2}, forget plot]
  table[row sep=crcr]{2	0.0863662064165024\\
3	0.154120577028314\\
4	0.634700775499768\\
5	1.54629411341763\\
6	5.1600109577354\\
};
\addplot [color=mycolor3, dashed, line width=1pt, mark size=1.5pt, mark=square, mark options={solid, mycolor3}, forget plot]
  table[row sep=crcr]{2	0.0863662204430876\\
3	0.203013871667207\\
4	0.900580937672683\\
5	2.5296465533084\\
6	9.76139556548127\\
};
\addplot [color=mycolor4, dashed, line width=1pt, mark size=1.5pt, mark=square, mark options={solid, mycolor4}, forget plot]
  table[row sep=crcr]{2	nan\\
3	nan\\
4	nan\\
5	nan\\
6	nan\\
};
\end{axis}
\end{tikzpicture}
    \end{subfigure}\qquad\qquad\qquad
    \begin{subfigure}[b]{0.4\textwidth}
        \definecolor{mycolor1}{rgb}{0.00000,0.44700,0.74100}\definecolor{mycolor2}{rgb}{0.85000,0.32500,0.09800}\definecolor{mycolor3}{rgb}{0.92900,0.69400,0.12500}\definecolor{mycolor4}{rgb}{0.49400,0.18400,0.55600}\begin{tikzpicture}

\begin{axis}[width=\figurewidth,
height=0.849\figureheight,
at={(0\figurewidth,0\figureheight)},
scale only axis,
unbounded coords=jump,
xmin=2,
xmax=6,
xtick={2, 3, 4, 5, 6},
xlabel style={font=\color{white!15!black}},
xlabel={Number of levels},
ymode=log,
ymin=0.001,
ymax=10,
yminorticks=true,
yticklabels={,,},
axis background/.style={fill=white},
title style={font=\bfseries},
title={L-stable SDIRK4}
]
\addplot [color=mycolor1, line width=1pt, mark size=1.5pt, mark=*, mark options={solid, mycolor1}, forget plot]
  table[row sep=crcr]{2	0.0077144227250298\\
3	0.0510693648925527\\
4	0.165463512996394\\
5	0.29931869549169\\
6	0.379044641634633\\
};
\addplot [color=mycolor2, line width=1pt, mark size=1.5pt, mark=*, mark options={solid, mycolor2}, forget plot]
  table[row sep=crcr]{2	0.00904014962417012\\
3	0.0558594606636554\\
4	0.175691175088398\\
5	0.316096149923038\\
6	0.407162347254685\\
};
\addplot [color=mycolor3, line width=1pt, mark size=1.5pt, mark=*, mark options={solid, mycolor3}, forget plot]
  table[row sep=crcr]{2	0.00904016977881408\\
3	0.064523188799708\\
4	0.231294806716215\\
5	0.468360183746874\\
6	0.666788743913936\\
};
\addplot [color=mycolor4, line width=1.5pt, mark size=2.5pt, dotted, mark=x, mark options={solid, mycolor4}, forget plot]
  table[row sep=crcr]{2	0.00904016977881408\\
3	0.062385771452617\\
4	0.218758972429859\\
5	0.434555063361901\\
6	0.608395419508744\\
};
\addplot [color=mycolor1, dashed, line width=1pt, mark size=1.5pt, mark=square, mark options={solid, mycolor1}, forget plot]
  table[row sep=crcr]{2	0.0077144227250298\\
3	0.00657121293229005\\
4	0.00753749275460558\\
5	0.00894728620717261\\
6	0.00893810642262411\\
};
\addplot [color=mycolor2, dashed, line width=1pt, mark size=1.5pt, mark=square, mark options={solid, mycolor2}, forget plot]
  table[row sep=crcr]{2	0.00904014962417012\\
3	0.0100528157542381\\
4	0.0138067881652422\\
5	0.0202493461430096\\
6	0.0266388598922067\\
};
\addplot [color=mycolor3, dashed, line width=1pt, mark size=1.5pt, mark=square, mark options={solid, mycolor3}, forget plot]
  table[row sep=crcr]{2	0.00904016977881408\\
3	0.0119496731289021\\
4	0.0220120806932655\\
5	0.0383963507307148\\
6	0.0551673946862427\\
};
\addplot [color=mycolor4, dashed, line width=1pt, mark size=1.5pt, mark=square, mark options={solid, mycolor4}, forget plot]
  table[row sep=crcr]{2	nan\\
3	nan\\
4	nan\\
5	nan\\
6	nan\\
};
\end{axis}
\end{tikzpicture}
    \end{subfigure}
    \caption{Isotropic diffusion: Comparison of V- and F-cycle MGRIT with F-relaxation.
        Convergence of A-stable schemes deteriorates much quicker with a growing number of time
        grid levels and V-cycle MGRIT than for L-stable schemes and V-cycle MGRIT.
        The convergence factor for L-stable schemes and F-cycle MGRIT is almost constant.}
    \label{diffusion-isotropic-V-F-cycle-nt1024-F-relaxation-nn11x11-fig}
\end{figure}
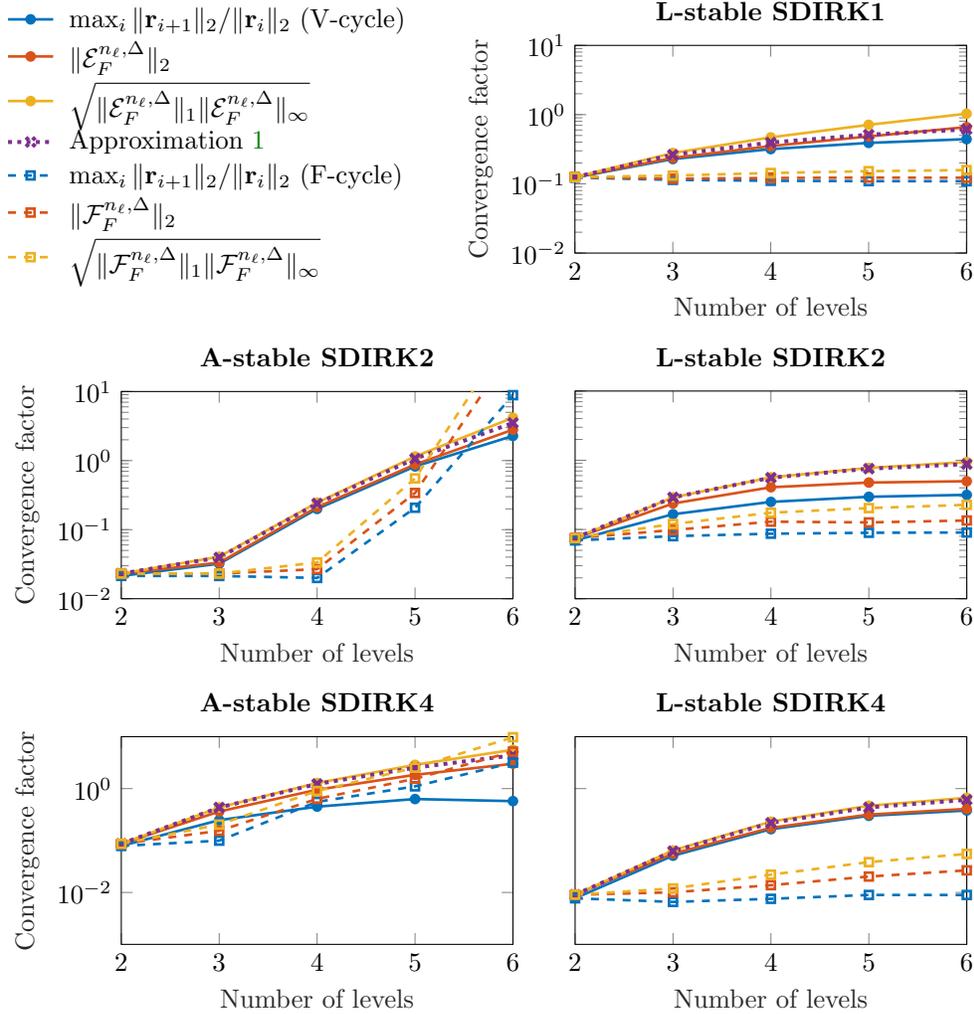
\FloatBarrier
\FloatBarrier
\begin{figure}[h!]
    \centering
    \hspace{-1.75cm}
    \setlength{\figurewidth}{0.3\linewidth}
    \setlength{\figureheight}{0.25\linewidth}
    \begin{subfigure}[b]{0.4\textwidth}
        \raisebox{0.5cm}{\definecolor{mycolor1}{rgb}{0.00000,0.44700,0.74100}\definecolor{mycolor2}{rgb}{0.85000,0.32500,0.09800}\definecolor{mycolor3}{rgb}{0.92900,0.69400,0.12500}\definecolor{mycolor4}{rgb}{0.49400,0.18400,0.55600}\begin{tikzpicture}

\begin{axis}[width=\figurewidth,
height=0.8\figureheight,
at={(0\figurewidth,0\figureheight)},
hide axis,
unbounded coords=jump,
xmin=2,
xmax=6,
ymin=0,
ymax=1,
axis background/.style={fill=white},
title style={font=\bfseries},
legend style={at={(-0.03,0.5)}, anchor=center, legend cell align=left, align=left, fill=none, draw=none}
]

\addlegendimage{color=mycolor1, line width=1pt, mark size=1.5pt, mark=*}
\addlegendimage{color=mycolor2, line width=1pt, mark size=1.5pt, mark=*}
\addlegendimage{color=mycolor3, line width=1pt, mark size=1.5pt, mark=*}
\addlegendimage{color=mycolor4, line width=1.5pt, mark size=2.5pt, dotted, mark=x}
\addlegendimage{color=mycolor1, line width=1.2pt, dashed, mark size=2.0pt, mark=square, mark options={solid, mycolor1}}
\addlegendimage{color=mycolor2, line width=1.2pt, dashed, mark size=2.0pt, mark=square, mark options={solid, mycolor2}}
\addlegendimage{color=mycolor3, line width=1.2pt, dashed, mark size=2.0pt, mark=square, mark options={solid, mycolor3}}
\addlegendimage{color=mycolor4, line width=1.2pt, dashed, mark size=2.0pt, mark=square, mark options={solid, mycolor4}}

\addlegendentry{~$\max_i \|\mathbf{r}_{i+1}\|_2 / \|\mathbf{r}_i\|_2$ (V-cycle)}
\addlegendentry{~$\| \mathcal{E}_{FCF}^{n_\ell,\Delta} \|_2$}
\addlegendentry{~$\sqrt{ \| \mathcal{E}_{FCF}^{n_\ell,\Delta} \|_1 \| \mathcal{E}_{FCF}^{n_\ell,\Delta} \|_\infty }$}
\addlegendentry{~Approximation \ref{cf-nl-FCF-app}}
\addlegendentry{~$\max_i \|\mathbf{r}_{i+1}\|_2 / \|\mathbf{r}_i\|_2$ (F-cycle)}
\addlegendentry{~$\| \mathcal{F}_{FCF}^{n_\ell,\Delta} \|_2$}
\addlegendentry{~$\sqrt{ \| \mathcal{F}_{FCF}^{n_\ell,\Delta} \|_1 \| \mathcal{F}_{FCF}^{n_\ell,\Delta} \|_\infty }$}
\end{axis}
\end{tikzpicture}}
    \end{subfigure}\qquad
    \setlength{\figurewidth}{0.4\linewidth}
    \setlength{\figureheight}{0.25\linewidth}
    \begin{subfigure}[b]{0.4\textwidth}
        \definecolor{mycolor1}{rgb}{0.00000,0.44700,0.74100}\definecolor{mycolor2}{rgb}{0.85000,0.32500,0.09800}\definecolor{mycolor3}{rgb}{0.92900,0.69400,0.12500}\definecolor{mycolor4}{rgb}{0.49400,0.18400,0.55600}\begin{tikzpicture}

\begin{axis}[width=\figurewidth,
height=0.849\figureheight,
at={(0\figurewidth,0\figureheight)},
scale only axis,
unbounded coords=jump,
xmin=2,
xmax=6,
xtick={2, 3, 4, 5, 6},
xlabel style={font=\color{white!15!black}},
xlabel={Number of levels},
ymode=log,
ymin=0.01,
ymax=1,
yminorticks=true,
ylabel style={font=\color{white!15!black}},
ylabel={Convergence factor},
axis background/.style={fill=white},
title style={font=\bfseries},
title={L-stable SDIRK1}
]
\addplot [color=mycolor1, line width=1pt, mark size=1.5pt, mark=*, mark options={solid, mycolor1}, forget plot]
  table[row sep=crcr]{2	0.0507656085880213\\
3	0.0762220161610809\\
4	0.0927548467372501\\
5	0.0955387621654756\\
6	0.105299608081205\\
};
\addplot [color=mycolor2, line width=1pt, mark size=1.5pt, mark=*, mark options={solid, mycolor2}, forget plot]
  table[row sep=crcr]{2	0.052720963365261\\
3	0.104660921371181\\
4	0.161076639878588\\
5	0.225265894986905\\
6	0.32043286103732\\
};
\addplot [color=mycolor3, line width=1pt, mark size=1.5pt, mark=*, mark options={solid, mycolor3}, forget plot]
  table[row sep=crcr]{2	0.0527247958249668\\
3	0.11844847593038\\
4	0.201417542062186\\
5	0.305775883283825\\
6	0.452874126809709\\
};
\addplot [color=mycolor4, line width=1.5pt, mark size=2.5pt, dotted, mark=x, mark options={solid, mycolor4}, forget plot]
  table[row sep=crcr]{2	0.0527247958249668\\
3	0.109520852086695\\
4	0.160873074180116\\
5	0.218529659523469\\
6	0.291222618620188\\
};
\addplot [color=mycolor1, dashed, line width=1pt, mark size=1.5pt, mark=square, mark options={solid, mycolor1}, forget plot]
  table[row sep=crcr]{2	0.0507656085880213\\
3	0.0499750896274947\\
4	0.0498901124115222\\
5	0.0498915255193069\\
6	0.0498918350190326\\
};
\addplot [color=mycolor2, dashed, line width=1pt, mark size=1.5pt, mark=square, mark options={solid, mycolor2}, forget plot]
  table[row sep=crcr]{2	0.0527209633652611\\
3	0.0518503965987351\\
4	0.0517310137613537\\
5	0.0517262247294525\\
6	0.0517261785848375\\
};
\addplot [color=mycolor3, dashed, line width=1pt, mark size=1.5pt, mark=square, mark options={solid, mycolor3}, forget plot]
  table[row sep=crcr]{2	0.0527247958249668\\
3	0.0525899597284976\\
4	0.0526591486046778\\
5	0.0526724075301763\\
6	0.0526726399335792\\
};
\addplot [color=mycolor4, dashed, line width=1pt, mark size=1.5pt, mark=square, mark options={solid, mycolor4}, forget plot]
  table[row sep=crcr]{2	nan\\
3	nan\\
4	nan\\
5	nan\\
6	nan\\
};
\end{axis}
\end{tikzpicture}
    \end{subfigure}\\[1ex]
    \setlength{\figurewidth}{0.4\linewidth}
    \setlength{\figureheight}{0.25\linewidth}
    \begin{subfigure}[b]{0.4\textwidth}
        \definecolor{mycolor1}{rgb}{0.00000,0.44700,0.74100}\definecolor{mycolor2}{rgb}{0.85000,0.32500,0.09800}\definecolor{mycolor3}{rgb}{0.92900,0.69400,0.12500}\definecolor{mycolor4}{rgb}{0.49400,0.18400,0.55600}\begin{tikzpicture}

\begin{axis}[width=\figurewidth,
height=0.849\figureheight,
at={(0\figurewidth,0\figureheight)},
scale only axis,
unbounded coords=jump,
xmin=2,
xmax=6,
xtick={2, 3, 4, 5, 6},
xlabel style={font=\color{white!15!black}},
xlabel={Number of levels},
ymode=log,
ymin=0.001,
ymax=0.1,
yminorticks=true,
ylabel style={font=\color{white!15!black}},
ylabel={Convergence factor},
axis background/.style={fill=white},
title style={font=\bfseries},
title={A-stable SDIRK2}
]
\addplot [color=mycolor1, line width=1pt, mark size=1.5pt, mark=*, mark options={solid, mycolor1}, forget plot]
  table[row sep=crcr]{2	0.00338839311398251\\
3	0.00375775453672993\\
4	0.00379400731076718\\
5	0.00401933772692649\\
6	0.00431952235024204\\
};
\addplot [color=mycolor2, line width=1pt, mark size=1.5pt, mark=*, mark options={solid, mycolor2}, forget plot]
  table[row sep=crcr]{2	0.00358908564389384\\
3	0.00618654749967819\\
4	0.0082843075215249\\
5	0.0106796241252513\\
6	0.0132568115401321\\
};
\addplot [color=mycolor3, line width=1pt, mark size=1.5pt, mark=*, mark options={solid, mycolor3}, forget plot]
  table[row sep=crcr]{2	0.00358912125421567\\
3	0.00705905541794423\\
4	0.0104958972455362\\
5	0.0146098415093414\\
6	0.0190264874019192\\
};
\addplot [color=mycolor4, line width=1.5pt, mark size=2.5pt, dotted, mark=x, mark options={solid, mycolor4}, forget plot]
  table[row sep=crcr]{2	0.00358912125421566\\
3	0.00620906901328345\\
4	0.00729492947939787\\
5	0.00853343122143058\\
6	0.00978990733423967\\
};
\addplot [color=mycolor1, dashed, line width=1pt, mark size=1.5pt, mark=square, mark options={solid, mycolor1}, forget plot]
  table[row sep=crcr]{2	0.00338839311398251\\
3	0.00338893096306971\\
4	0.00338890286254229\\
5	0.00338890286254229\\
6	0.00338890286254229\\
};
\addplot [color=mycolor2, dashed, line width=1pt, mark size=1.5pt, mark=square, mark options={solid, mycolor2}, forget plot]
  table[row sep=crcr]{2	0.00358908564389384\\
3	0.00358965214805635\\
4	0.00358965181737856\\
5	0.003589651816802\\
6	0.00358965181675937\\
};
\addplot [color=mycolor3, dashed, line width=1pt, mark size=1.5pt, mark=square, mark options={solid, mycolor3}, forget plot]
  table[row sep=crcr]{2	0.00358912125421566\\
3	0.00359013810899722\\
4	0.00359013807403896\\
5	0.0035901380742022\\
6	0.00359013807422401\\
};
\addplot [color=mycolor4, dashed, line width=1pt, mark size=1.5pt, mark=square, mark options={solid, mycolor4}, forget plot]
  table[row sep=crcr]{2	nan\\
3	nan\\
4	nan\\
5	nan\\
6	nan\\
};
\end{axis}
\end{tikzpicture}
    \end{subfigure}\qquad\qquad\qquad
    \begin{subfigure}[b]{0.4\textwidth}
        \definecolor{mycolor1}{rgb}{0.00000,0.44700,0.74100}\definecolor{mycolor2}{rgb}{0.85000,0.32500,0.09800}\definecolor{mycolor3}{rgb}{0.92900,0.69400,0.12500}\definecolor{mycolor4}{rgb}{0.49400,0.18400,0.55600}\begin{tikzpicture}

\begin{axis}[width=\figurewidth,
height=0.849\figureheight,
at={(0\figurewidth,0\figureheight)},
scale only axis,
unbounded coords=jump,
xmin=2,
xmax=6,
xtick={2, 3, 4, 5, 6},
xlabel style={font=\color{white!15!black}},
xlabel={Number of levels},
ymode=log,
ymin=0.001,
ymax=0.1,
yminorticks=true,
yticklabels={,,},
axis background/.style={fill=white},
title style={font=\bfseries},
title={L-stable SDIRK2}
]
\addplot [color=mycolor1, line width=1pt, mark size=1.5pt, mark=*, mark options={solid, mycolor1}, forget plot]
  table[row sep=crcr]{2	0.00805780361238282\\
3	0.00874145335526299\\
4	0.00856143261913144\\
5	0.00901556292441842\\
6	0.00964555796752871\\
};
\addplot [color=mycolor2, line width=1pt, mark size=1.5pt, mark=*, mark options={solid, mycolor2}, forget plot]
  table[row sep=crcr]{2	0.00843225160309777\\
3	0.0145274456119761\\
4	0.0192187315108144\\
5	0.0250238416066786\\
6	0.0316643190352325\\
};
\addplot [color=mycolor3, line width=1pt, mark size=1.5pt, mark=*, mark options={solid, mycolor3}, forget plot]
  table[row sep=crcr]{2	0.00843230356225675\\
3	0.0165225712813463\\
4	0.024541554864997\\
5	0.0342152784744969\\
6	0.0457717196989184\\
};
\addplot [color=mycolor4, line width=1.5pt, mark size=2.5pt, dotted, mark=x, mark options={solid, mycolor4}, forget plot]
  table[row sep=crcr]{2	0.00843230356225676\\
3	0.0143742695621335\\
4	0.0166147336058806\\
5	0.0194131482593446\\
6	0.0226216982757712\\
};
\addplot [color=mycolor1, dashed, line width=1pt, mark size=1.5pt, mark=square, mark options={solid, mycolor1}, forget plot]
  table[row sep=crcr]{2	0.00805780361238282\\
3	0.00805991676071779\\
4	0.00805983860159189\\
5	0.00805983860159189\\
6	0.00805983860159189\\
};
\addplot [color=mycolor2, dashed, line width=1pt, mark size=1.5pt, mark=square, mark options={solid, mycolor2}, forget plot]
  table[row sep=crcr]{2	0.00843225160309777\\
3	0.00843424784631447\\
4	0.00843425202668377\\
5	0.0084342520883057\\
6	0.00843425208892959\\
};
\addplot [color=mycolor3, dashed, line width=1pt, mark size=1.5pt, mark=square, mark options={solid, mycolor3}, forget plot]
  table[row sep=crcr]{2	0.00843230356225675\\
3	0.00843701945059571\\
4	0.0084370454421996\\
5	0.00843704664621818\\
6	0.00843704668011925\\
};
\addplot [color=mycolor4, dashed, line width=1pt, mark size=1.5pt, mark=square, mark options={solid, mycolor4}, forget plot]
  table[row sep=crcr]{2	nan\\
3	nan\\
4	nan\\
5	nan\\
6	nan\\
};
\end{axis}
\end{tikzpicture}
    \end{subfigure}\\[1ex]
                                    \setlength{\figurewidth}{0.4\linewidth}
    \setlength{\figureheight}{0.25\linewidth}
    \begin{subfigure}[b]{0.4\textwidth}
        \definecolor{mycolor1}{rgb}{0.00000,0.44700,0.74100}\definecolor{mycolor2}{rgb}{0.85000,0.32500,0.09800}\definecolor{mycolor3}{rgb}{0.92900,0.69400,0.12500}\definecolor{mycolor4}{rgb}{0.49400,0.18400,0.55600}\begin{tikzpicture}

\begin{axis}[width=\figurewidth,
height=0.849\figureheight,
at={(0\figurewidth,0\figureheight)},
scale only axis,
unbounded coords=jump,
xmin=2,
xmax=6,
xtick={2, 3, 4, 5, 6},
xlabel style={font=\color{white!15!black}},
xlabel={Number of levels},
ymode=log,
ymin=0.0001,
ymax=0.1,
yminorticks=true,
ylabel style={font=\color{white!15!black}},
ylabel={Convergence factor},
axis background/.style={fill=white},
title style={font=\bfseries},
title={A-stable SDIRK4}
]
\addplot [color=mycolor1, line width=1pt, mark size=1.5pt, mark=*, mark options={solid, mycolor1}, forget plot]
  table[row sep=crcr]{2	0.00785943192590899\\
3	0.00811177034887364\\
4	0.00808630985531714\\
5	0.00807501170270579\\
6	0.0080737537576157\\
};
\addplot [color=mycolor2, line width=1pt, mark size=1.5pt, mark=*, mark options={solid, mycolor2}, forget plot]
  table[row sep=crcr]{2	0.00825406562676913\\
3	0.0134102523698514\\
4	0.0165375365380465\\
5	0.0208047069277238\\
6	0.027246238837597\\
};
\addplot [color=mycolor3, line width=1pt, mark size=1.5pt, mark=*, mark options={solid, mycolor3}, forget plot]
  table[row sep=crcr]{2	0.008254091063939\\
3	0.0151365004296606\\
4	0.0214295902065989\\
5	0.0283046098966209\\
6	0.0383694479688892\\
};
\addplot [color=mycolor4, line width=1.5pt, mark size=2.5pt, dotted, mark=x, mark options={solid, mycolor4}, forget plot]
  table[row sep=crcr]{2	0.008254091063939\\
3	0.0133943639788404\\
4	0.0159019933698168\\
5	0.0205137925018193\\
6	0.0268587727143028\\
};
\addplot [color=mycolor1, dashed, line width=1pt, mark size=1.5pt, mark=square, mark options={solid, mycolor1}, forget plot]
  table[row sep=crcr]{2	0.00785943192590899\\
3	0.00785974461570764\\
4	0.0078597450695597\\
5	0.00785974245841552\\
6	0.00785974245841552\\
};
\addplot [color=mycolor2, dashed, line width=1pt, mark size=1.5pt, mark=square, mark options={solid, mycolor2}, forget plot]
  table[row sep=crcr]{2	0.00825406562676912\\
3	0.00825449925400225\\
4	0.00825450577423344\\
5	0.00825450604944256\\
6	0.00825450606143849\\
};
\addplot [color=mycolor3, dashed, line width=1pt, mark size=1.5pt, mark=square, mark options={solid, mycolor3}, forget plot]
  table[row sep=crcr]{2	0.008254091063939\\
3	0.00825567033808819\\
4	0.00825573484892467\\
5	0.00825575758887024\\
6	0.00825576657519351\\
};
\addplot [color=mycolor4, dashed, line width=1pt, mark size=1.5pt, mark=square, mark options={solid, mycolor4}, forget plot]
  table[row sep=crcr]{2	nan\\
3	nan\\
4	nan\\
5	nan\\
6	nan\\
};
\end{axis}
\end{tikzpicture}
    \end{subfigure}\qquad\qquad\qquad
    \begin{subfigure}[b]{0.4\textwidth}
        \definecolor{mycolor1}{rgb}{0.00000,0.44700,0.74100}\definecolor{mycolor2}{rgb}{0.85000,0.32500,0.09800}\definecolor{mycolor3}{rgb}{0.92900,0.69400,0.12500}\definecolor{mycolor4}{rgb}{0.49400,0.18400,0.55600}\begin{tikzpicture}

\begin{axis}[width=\figurewidth,
height=0.849\figureheight,
at={(0\figurewidth,0\figureheight)},
scale only axis,
unbounded coords=jump,
xmin=2,
xmax=6,
xtick={2, 3, 4, 5, 6},
xlabel style={font=\color{white!15!black}},
xlabel={Number of levels},
ymode=log,
ymin=0.0001,
ymax=0.1,
yminorticks=true,
yticklabels={,,},
axis background/.style={fill=white},
title style={font=\bfseries},
title={L-stable SDIRK4}
]
\addplot [color=mycolor1, line width=1pt, mark size=1.5pt, mark=*, mark options={solid, mycolor1}, forget plot]
  table[row sep=crcr]{2	0.00059597330930937\\
3	0.000604190470844742\\
4	0.000597479774147083\\
5	0.000592660194559346\\
6	0.000592045121194136\\
};
\addplot [color=mycolor2, line width=1pt, mark size=1.5pt, mark=*, mark options={solid, mycolor2}, forget plot]
  table[row sep=crcr]{2	0.000803016695420202\\
3	0.00128467080975186\\
4	0.00144516257928956\\
5	0.00171422344236253\\
6	0.0021725709155811\\
};
\addplot [color=mycolor3, line width=1pt, mark size=1.5pt, mark=*, mark options={solid, mycolor3}, forget plot]
  table[row sep=crcr]{2	0.000803018492711753\\
3	0.00140175740862788\\
4	0.00180486905884906\\
5	0.00232471473220589\\
6	0.00310512492069396\\
};
\addplot [color=mycolor4, line width=1.5pt, mark size=2.5pt, dotted, mark=x, mark options={solid, mycolor4}, forget plot]
  table[row sep=crcr]{2	0.000803018492711753\\
3	0.00122604306427911\\
4	0.00128050341129631\\
5	0.00135310311239575\\
6	0.0014657876002492\\
};
\addplot [color=mycolor1, dashed, line width=1pt, mark size=1.5pt, mark=square, mark options={solid, mycolor1}, forget plot]
  table[row sep=crcr]{2	0.00059597330930937\\
3	0.000595968890910351\\
4	0.000595969479168915\\
5	0.000595969479168915\\
6	0.000595969479168915\\
};
\addplot [color=mycolor2, dashed, line width=1pt, mark size=1.5pt, mark=square, mark options={solid, mycolor2}, forget plot]
  table[row sep=crcr]{2	0.000803016695420203\\
3	0.000803004620631999\\
4	0.000803004554199549\\
5	0.000803004553258015\\
6	0.000803004553245021\\
};
\addplot [color=mycolor3, dashed, line width=1pt, mark size=1.5pt, mark=square, mark options={solid, mycolor3}, forget plot]
  table[row sep=crcr]{2	0.000803018492711753\\
3	0.000803041369316796\\
4	0.000803041603361639\\
5	0.000803041609266975\\
6	0.000803041609412013\\
};
\addplot [color=mycolor4, dashed, line width=1pt, mark size=1.5pt, mark=square, mark options={solid, mycolor4}, forget plot]
  table[row sep=crcr]{2	nan\\
3	nan\\
4	nan\\
5	nan\\
6	nan\\
};
\end{axis}
\end{tikzpicture}
    \end{subfigure}
    \caption{Isotropic diffusion: Comparison of V- and F-cycle MGRIT with FCF-relaxation.
        Convergence of A-stable and L-stable schemes deteriorates only slightly for an
        MGRIT V-cycle algorithm. On the other hand, the convergence factor
        for an F-cycle MGRIT algorithm is constant for all considered RK schemes and cases.}
    \label{diffusion-isotropic-V-F-cycle-nt1024-FCF-relaxation-nn11x11-fig}
\end{figure}
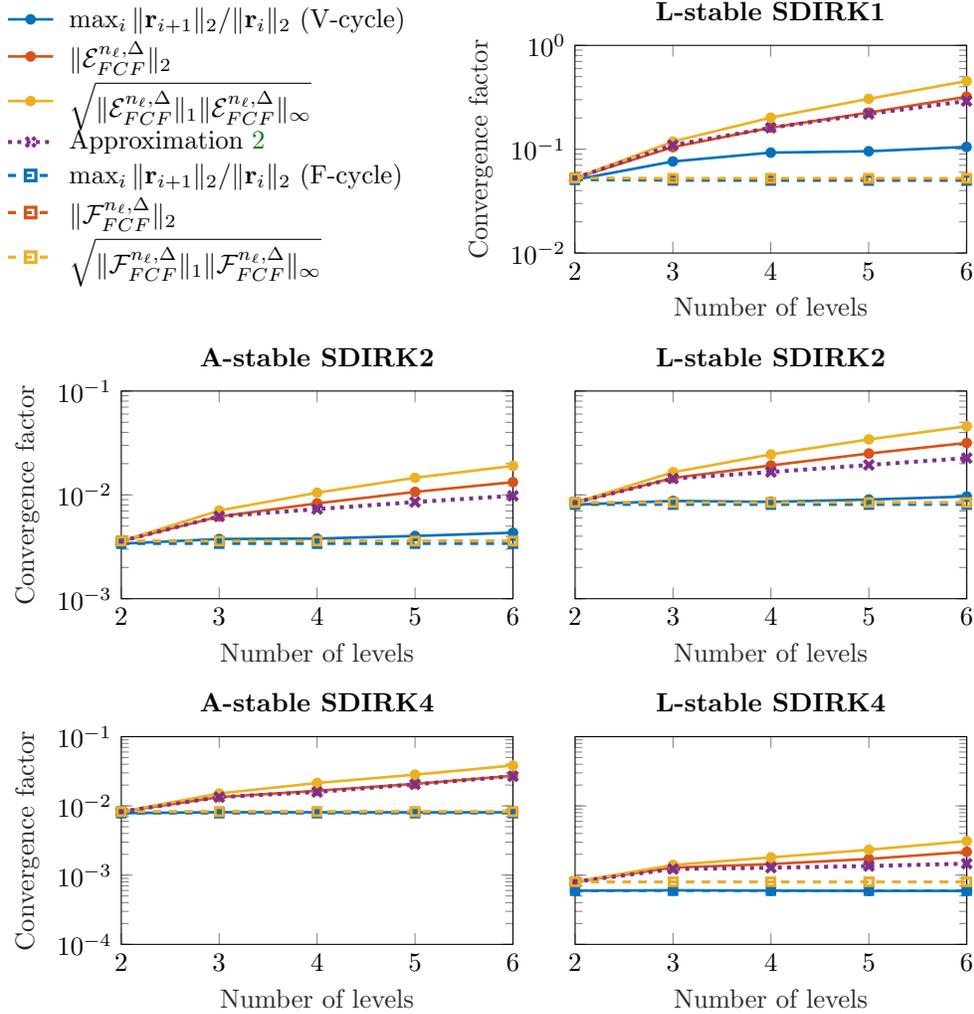
\FloatBarrier
\subsubsection{Anisotropic diffusion}
In this section, we investigate the anisotropic diffusion case for the L-stable SDIRK1 scheme (backward Euler)
to assess how sensitive the estimates are with respect to conductivity parameters.
Here, conductivity parameters are given by $k_1 = 0.5$ and $k_2 = 0.001$, and the CFL number on each level,
$    \text{CFL}_\ell
    = 2 \pi (k_1 + k_2) / [\delta_x^2 (N_\ell - 1) ],$
ranges between $\text{CFL}_0 \approx 0.009$ on level $0$ and $\text{CFL}_5 \approx 0.302$ on level $5$.
Results are presented in Supplementary Figure \ref{diffusion-anisotropic-V-F-cycle-nt1024-F-FCF-relaxation-nn11x11-suppfig}.

For V-cycle algorithms with F- and FCF-relaxation, the estimated and observed convergence factors grow
with the number of grid levels, similar to the isotropic case.
Again, FCF-relaxation yields a quicker plateauing of the observed convergence factor.
On the other hand, for F-cycle algorithms with F- and FCF-relaxtion, observed and estimated convergence
are effectively constant. This means that, for this problem, an F-cycle solves the coarse-grid problem
sufficiently accurately that residual and error reduction
is more-or-less equivalent to a two-level method.
Conversely, convergence in the case of V-cycles deteriorates due to inexact solves of the coarse-grid
problem on each level. However, the fact that solving the coarse-grid problem more accurately (such as,
with F-cycles) improves convergence, indicates that the non-Galerkin coarse-grid operator (that is, taking
larger time steps on the coarse grid using the same integration scheme) is indeed an effective preconditioner.
Note, this is in contrast to using algebraic multigrid to solve anisotropic diffusion discretizations in the spatial setting,
where stronger cycles such as F- and W-cycles often do not improve convergence \cite{ManteuffelOlsonSchroderSouthworth2017}.

The approximate bounds on convergence of F-cycles are fairly sharp for F- and FCF-relaxation and all numbers
of levels tested. In the case of V-cycles, the bounds and approximate convergence factors loose sharpness
as the number of time grid levels increases, similar to Section \ref{isotropic-diffusion-sec}, but still provide
reasonable estimates on convergence. Indeed, for V-cycles with F-relaxation, Approximation 1 is quite sharp
for all tested number of levels.
\subsection{Wave equation}

Consider the wave equation in two spatial dimensions over domain $\Omega = (0, 2\pi) \times (0, 2\pi )$,
\begin{alignat}{4}
    \partial_{tt} u &= c^2 \nabla \cdot \nabla u \qquad
    \text{for } \mathbf{x} \in \Omega,  t \in (0, 2\pi],
    \label{wave-eqn}
\end{alignat}
with scalar solution $u(\mathbf{x},t)$ and wave speed $c = \sqrt{10}$. We transform Equation \eqref{wave-eqn} into
a system of PDEs that are first-order in time,
\begin{alignat}{4}
    \partial_t u &= v, \qquad
    \partial_t v = c^2 \nabla \cdot \nabla u,\qquad
    \text{for } \mathbf{x} \in \Omega,  t \in (0, 2\pi],
    \label{wave-system-eqn}
\end{alignat}
with initial condition (see Figure \ref{diffusion-wave-ic-suppfig}) and boundary conditions,
\begin{alignat}{4}
    u (\cdot, 0) &= \sin(x) \sin(y), \quad v (\cdot, 0) = 0,\qquad &&\text{for } \mathbf{x} \in \Omega \cup \partial \Omega, \\
    u (\mathbf{x}, \cdot) &= v (\mathbf{x}, \cdot) = 0, \qquad &&\text{for } \mathbf{x} \in \partial \Omega. \label{wave-bc-eqn}
\end{alignat}

This problem corresponds to a 2D membrane with imposed non-zero initial displacement $u$
and zero initial velocity $v$. The membrane enters an oscillatory motion pattern due to initial stresses in the material.
Thus, it is a simplified representative of a hyperbolic model that shares characteristic behavior
with PDEs in solid dynamics research, such as linear elasticity \cite{HessenthalerNordslettenRoehrleSchroderFalgout2018}.
Similar to Section \ref{diffusion-equation-sec}, we use second-order centered finite differences
to discretize the spatial operator in Equation \eqref{wave-system-eqn}.
The time stepping operators $\Phi_\ell$ are then simultaneously diagonalizable and the Courant number on each level
is given by $\mathcal{C}_\ell = 2 c \pi / [\delta_x (N_\ell - 1)],$
ranging between $\mathcal{C}_0 \approx 0.034$ on level $0$ and $\mathcal{C}_5 \approx 1.087$ on level $5$.\footnote{Note,
that the Courant number is smaller than in Section \ref{isotropic-diffusion-sec}.
This is motivated by selecting a configuration that captures multiple periods of the oscillatory temporal behavior.}

An MGRIT V-cycle algorithm with FCF-relaxation shows quickly increasing convergence factors
with a growing number of time grid levels (see Figure \ref{wave-V-cycle-nt1024-FCF-relaxation-nn11x11-fig}
and Supplementary Figure \ref{wave-V-cycle-nt1024-FCF-relaxation-nn11x11-suppfig}).
The worst-case convergence factors quickly exceed $1$, and thus diverge, which is correctly predicted by all upper
bounds and Approximation \ref{cf-nl-FCF-app}.
Similarly, using an F-cycle results in a less dramatic, but still significant increase in observed and predicted
convergence factors with respect to the number of levels. For some schemes, particularly L-stable ones, an
F-cycle is able to retain convergence up to the six levels in time considered here, but the bounds and approximations
developed here do not predict these results.
\FloatBarrier
\begin{figure}[h!]
    \centering
    \setlength{\figurewidth}{0.4\linewidth}
    \setlength{\figureheight}{0.25\linewidth}
    \begin{subfigure}[b]{0.4\textwidth}
        \definecolor{mycolor1}{rgb}{0.00000,0.44700,0.74100}\definecolor{mycolor2}{rgb}{0.85000,0.32500,0.09800}\definecolor{mycolor3}{rgb}{0.92900,0.69400,0.12500}\definecolor{mycolor4}{rgb}{0.49400,0.18400,0.55600}\begin{tikzpicture}

\begin{axis}[width=\figurewidth,
height=0.849\figureheight,
at={(0\figurewidth,0\figureheight)},
scale only axis,
xmin=2,
xmax=6,
xtick={2, 3, 4, 5, 6},
xlabel style={font=\color{white!15!black}},
xlabel={Number of levels},
ymode=log,
ymin=0.01,
ymax=10,
yminorticks=true,
ylabel style={font=\color{white!15!black}},
ylabel={Convergence factor},
axis background/.style={fill=white},
title style={font=\bfseries},
title={A-stable SDIRK2}
]
\addplot [color=mycolor1, line width=1pt, mark size=1.5pt, mark=*, mark options={solid, mycolor1}, forget plot]
  table[row sep=crcr]{2	0.0126237948958803\\
3	0.0829917403735648\\
4	0.238225196343646\\
5	0.897536255881032\\
6	2.45683465444267\\
};
\addplot [color=mycolor2, line width=1pt, mark size=1.5pt, mark=*, mark options={solid, mycolor2}, forget plot]
  table[row sep=crcr]{2	0.0260990800226153\\
3	0.166092070101794\\
4	0.935281356159273\\
5	4.95670620876972\\
6	22.5251279779128\\
};
\addplot [color=mycolor3, line width=1pt, mark size=1.5pt, mark=*, mark options={solid, mycolor3}, forget plot]
  table[row sep=crcr]{2	0.0409562483157834\\
3	0.273086683554486\\
4	1.56342657983375\\
5	8.34717901296332\\
6	38.3423423076198\\
};
\addplot [color=mycolor4, line width=1.5pt, mark size=2.5pt, dotted, mark=x, mark options={solid, mycolor4}, forget plot]
  table[row sep=crcr]{2	0.0410764525880669\\
3	0.215015300138658\\
4	1.03993298906818\\
5	4.93670208755283\\
6	21.3433329825369\\
};
\end{axis}
\end{tikzpicture}    \end{subfigure}\qquad\qquad\qquad
    \begin{subfigure}[b]{0.4\textwidth}
        \definecolor{mycolor1}{rgb}{0.00000,0.44700,0.74100}\definecolor{mycolor2}{rgb}{0.85000,0.32500,0.09800}\definecolor{mycolor3}{rgb}{0.92900,0.69400,0.12500}\definecolor{mycolor4}{rgb}{0.49400,0.18400,0.55600}\begin{tikzpicture}

\begin{axis}[width=\figurewidth,
height=0.849\figureheight,
at={(0\figurewidth,0\figureheight)},
scale only axis,
xmin=2,
xmax=6,
xtick={2, 3, 4, 5, 6},
xlabel style={font=\color{white!15!black}},
xlabel={Number of levels},
ymode=log,
ymin=0.01,
ymax=10,
yminorticks=true,
yticklabels={,,},
axis background/.style={fill=white},
title style={font=\bfseries},
title={L-stable SDIRK2}
]
\addplot [color=mycolor1, line width=1pt, mark size=1.5pt, mark=*, mark options={solid, mycolor1}, forget plot]
  table[row sep=crcr]{2	0.0244843517742587\\
3	0.157756502262385\\
4	0.434494005491729\\
5	1.42660225264305\\
6	3.16908975550067\\
};
\addplot [color=mycolor2, line width=1pt, mark size=1.5pt, mark=*, mark options={solid, mycolor2}, forget plot]
  table[row sep=crcr]{2	0.0505531000941099\\
3	0.319006721670251\\
4	1.71050760210563\\
5	7.05669105132868\\
6	15.5856601797381\\
};
\addplot [color=mycolor3, line width=1pt, mark size=1.5pt, mark=*, mark options={solid, mycolor3}, forget plot]
  table[row sep=crcr]{2	0.0793185638122103\\
3	0.523940814964597\\
4	2.83541336422652\\
5	11.243491032787\\
6	21.5975279921208\\
};
\addplot [color=mycolor4, line width=1.5pt, mark size=2.5pt, dotted, mark=x, mark options={solid, mycolor4}, forget plot]
  table[row sep=crcr]{2	0.0795511657749488\\
3	0.412655086367942\\
4	1.888039464055\\
5	6.63891655230384\\
6	11.6946879181383\\
};
\end{axis}
\end{tikzpicture}    \end{subfigure}\\[1ex]
    \setlength{\figurewidth}{0.4\linewidth}
    \setlength{\figureheight}{0.25\linewidth}
    \begin{subfigure}[b]{0.4\textwidth}
        \definecolor{mycolor1}{rgb}{0.00000,0.44700,0.74100}\definecolor{mycolor2}{rgb}{0.85000,0.32500,0.09800}\definecolor{mycolor3}{rgb}{0.92900,0.69400,0.12500}\definecolor{mycolor4}{rgb}{0.49400,0.18400,0.55600}\begin{tikzpicture}

\begin{axis}[width=\figurewidth,
height=0.849\figureheight,
at={(0\figurewidth,0\figureheight)},
scale only axis,
xmin=2,
xmax=6,
xtick={2, 3, 4, 5, 6},
xlabel style={font=\color{white!15!black}},
xlabel={Number of levels},
ymode=log,
ymin=1e-07,
ymax=10,
yminorticks=true,
ylabel style={font=\color{white!15!black}},
ylabel={Convergence factor},
axis background/.style={fill=white},
title style={font=\bfseries},
title={A-SDIRK4},
legend style={at={(0.6,0)}, anchor=south, draw=none, fill=none, legend cell align=left}
]
\addplot [color=mycolor1, line width=1pt, mark size=1.5pt, mark=*, mark options={solid, mycolor1}, forget plot]
  table[row sep=crcr]{2	0.00473339681471841\\
3	0.0496473619635444\\
4	0.430149516221153\\
5	0.963588686863819\\
6	1.89448510357372\\
};
\addplot [color=mycolor2, line width=1pt, mark size=1.5pt, mark=*, mark options={solid, mycolor2}, forget plot]
  table[row sep=crcr]{2	0.00727312389153252\\
3	0.1394504667967\\
4	1.16588157962976\\
5	3.27561930133136\\
6	7.90505053733317\\
};
\addplot [color=mycolor3, line width=1pt, mark size=1.5pt, mark=*, mark options={solid, mycolor3}, forget plot]
  table[row sep=crcr]{2	0.0114075353317223\\
3	0.219741898411467\\
4	1.60928334200213\\
5	3.72715994499783\\
6	9.53557040503556\\
};
\addplot [color=mycolor4, line width=1.5pt, mark size=2.5pt, dotted, mark=x, mark options={solid, mycolor4}, forget plot]
  table[row sep=crcr]{2	0.0114409240180685\\
3	0.161758661007896\\
4	0.981989167910772\\
5	2.04295246135027\\
6	5.12541331961503\\
};

\addlegendimage{color=mycolor1, line width=1pt, mark size=1.5pt, mark=*}
\addlegendimage{color=mycolor2, line width=1pt, mark size=1.5pt, mark=*}
\addlegendimage{color=mycolor3, line width=1pt, mark size=1.5pt, mark=*}
\addlegendimage{color=mycolor4, line width=1.5pt, mark size=2.5pt, dotted, mark=x}

\addlegendentry{\footnotesize~$\max_i \|\mathbf{r}_{i+1}\|_2 / \|\mathbf{r}_i\|_2$}
\addlegendentry{\footnotesize~$\| \mathcal{E}_{FCF}^{n_\ell,\Delta} \|_2$}
\addlegendentry{\footnotesize~$\sqrt{ \| \mathcal{E}_{FCF}^{n_\ell,\Delta} \|_1 \| \mathcal{E}_{FCF}^{n_\ell,\Delta} \|_\infty }$}
\addlegendentry{\footnotesize~Approximation \ref{cf-nl-FCF-app}}

\end{axis}
\end{tikzpicture}    \end{subfigure}\qquad\qquad\qquad
    \begin{subfigure}[b]{0.4\textwidth}
        \definecolor{mycolor1}{rgb}{0.00000,0.44700,0.74100}\definecolor{mycolor2}{rgb}{0.85000,0.32500,0.09800}\definecolor{mycolor3}{rgb}{0.92900,0.69400,0.12500}\definecolor{mycolor4}{rgb}{0.49400,0.18400,0.55600}\begin{tikzpicture}

\begin{axis}[width=\figurewidth,
height=0.849\figureheight,
at={(0\figurewidth,0\figureheight)},
scale only axis,
xmin=2,
xmax=6,
xtick={2, 3, 4, 5, 6},
xlabel style={font=\color{white!15!black}},
xlabel={Number of levels},
ymode=log,
ymin=1e-07,
ymax=10,
yminorticks=true,
yticklabels={,,},
axis background/.style={fill=white},
title style={font=\bfseries},
title={L-stable SDIRK4}
]
\addplot [color=mycolor1, line width=1pt, mark size=1.5pt, mark=*, mark options={solid, mycolor1}, forget plot]
  table[row sep=crcr]{2	5.68527005677997e-07\\
3	9.77911728432672e-06\\
4	0.00462349723864827\\
5	0.0663626090077182\\
6	0.708013452181577\\
};
\addplot [color=mycolor2, line width=1pt, mark size=1.5pt, mark=*, mark options={solid, mycolor2}, forget plot]
  table[row sep=crcr]{2	3.94737354375397e-05\\
3	0.000916292932387609\\
4	0.0203700306558133\\
5	0.426149053815399\\
6	6.65825730591849\\
};
\addplot [color=mycolor3, line width=1pt, mark size=1.5pt, mark=*, mark options={solid, mycolor3}, forget plot]
  table[row sep=crcr]{2	6.19445610380176e-05\\
3	0.0014589019032384\\
4	0.032479789254962\\
5	0.679755121901639\\
6	10.6197919278232\\
};
\addplot [color=mycolor4, line width=1.5pt, mark size=2.5pt, dotted, mark=x, mark options={solid, mycolor4}, forget plot]
  table[row sep=crcr]{2	6.21263647964648e-05\\
3	0.00106978228605693\\
4	0.0196460705547786\\
5	0.362316645782796\\
6	5.26753742222048\\
};
\end{axis}
\end{tikzpicture}    \end{subfigure}
    \caption{Wave equation: The convergence factor of MGRIT with V-cycles and FCF-relaxation
        increases substantially with a growing number of time grid levels
        and eventually exceeds $1$. This means that MGRIT V-cycles will likely yield
        a divergent algorithm in practice, which is in line with observations for hyperbolic PDEs
        in the literature \cite{FarhatChandesris2003,
        FarhatCortial2006,HessenthalerNordslettenRoehrleSchroderFalgout2018}.}
    \label{wave-V-cycle-nt1024-FCF-relaxation-nn11x11-fig}
\end{figure}
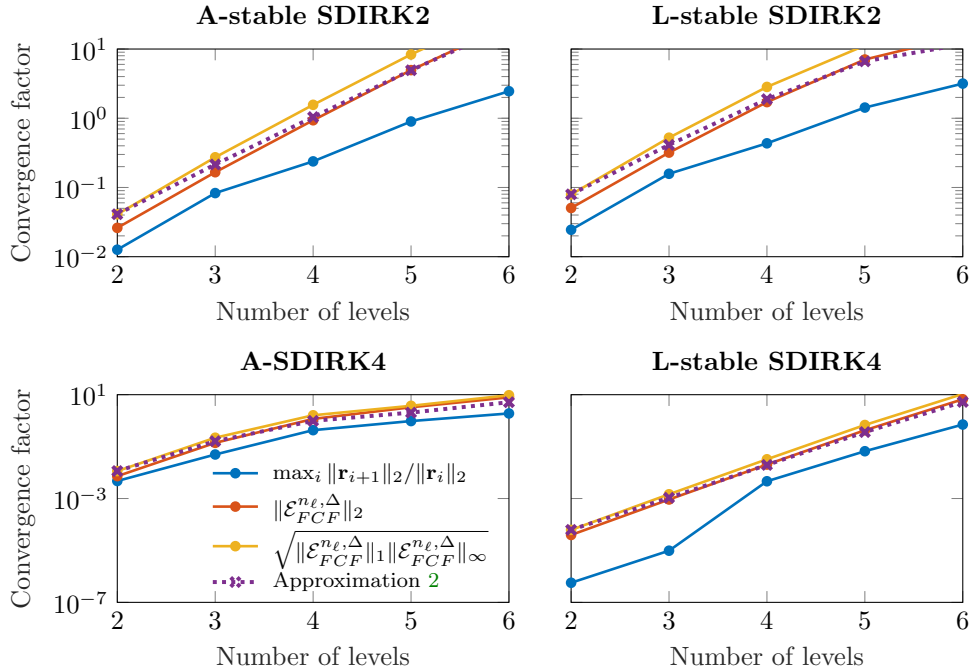
\FloatBarrier

In general, the upper bounds on convergence applied to the wave equation are significantly less sharp
compared to the diffusion equation (see Section \ref{diffusion-equation-sec}), but they are still able to
accurately represent trends.
For example, convergence factors are initially constant in most cases, then increase almost linearly
with the number of levels, such as the case of L-stable SDIRK3 in Supplementary Figure \ref{wave-F-cycle-nt1024-FCF-relaxation-nn11x11-suppfig}.
This highlights the fact that designing robust and convergent parallel-in-time algorithms for hyperbolic problems
is generally perceived as difficult, and emphasizes the benefit of the presented upper bounds for F-cycle algorithms.
For example, the convergence factor can be estimated a priori to select a time grid hierarchy
that is likely to yield a significant speedup.
In combination with performance modeling \cite{GahvariDobrevFalgoutKolevSchroderSchulzYang2016},
such a priori estimates can provide valuable guidance.

Note that for the problem considered here, the benefit of FCF-relaxation over F-relaxation
observed for the diffusion equation does not seem to apply for the wave equation (compare
Supplementary Figures \ref{wave-V-cycle-nt1024-F-relaxation-nn11x11-suppfig}
and \ref{wave-V-cycle-nt1024-FCF-relaxation-nn11x11-suppfig}, or Supplementary
Figures \ref{wave-F-cycle-nt1024-F-relaxation-nn11x11-suppfig}
and \ref{wave-F-cycle-nt1024-FCF-relaxation-nn11x11-suppfig}).
However, in some cases FCF-relaxation increases the maximum number of time grid levels
for which convergence can be achieved.
Thus, in practice one would prefer F-relaxation over FCF-relaxation to reduce the computational cost of a given algorithm.
The fact that FCF-relaxation is not sufficient to design a scalable multilevel solver for the wave equation
is a major difference to observations for the diffusion equation.

We further note, that the observed convergence factors and upper bound values are smaller with higher time integration
order, especially when L-stable SDIRK schemes are employed.
For example, the theory suggests to use five-level MGRIT with F-cycles and L-stable SDIRK4 with an estimated upper bound
on the convergence factor of $O (10^{-3})$, which is a very fast algorithm for hyperbolic PDEs.
\FloatBarrier
\begin{figure}[h!]
    \centering
    \setlength{\figurewidth}{0.4\linewidth}
    \setlength{\figureheight}{0.25\linewidth}
    \begin{subfigure}[b]{0.4\textwidth}
        \definecolor{mycolor1}{rgb}{0.00000,0.44700,0.74100}\definecolor{mycolor2}{rgb}{0.85000,0.32500,0.09800}\definecolor{mycolor3}{rgb}{0.92900,0.69400,0.12500}\definecolor{mycolor4}{rgb}{0.49400,0.18400,0.55600}\begin{tikzpicture}

\begin{axis}[width=\figurewidth,
height=0.849\figureheight,
at={(0\figurewidth,0\figureheight)},
scale only axis,
unbounded coords=jump,
xmin=2,
xmax=6,
xtick={2, 3, 4, 5, 6},
xlabel style={font=\color{white!15!black}},
xlabel={Number of levels},
ymode=log,
ymin=0.01,
ymax=10,
yminorticks=true,
ylabel style={font=\color{white!15!black}},
ylabel={Convergence factor},
axis background/.style={fill=white},
title style={font=\bfseries},
title={A-stable SDIRK2}
]
\addplot [color=mycolor1, line width=1pt, mark size=1.5pt, mark=*, mark options={solid, mycolor1}, forget plot]
  table[row sep=crcr]{2	0.0126237948958803\\
3	0.0125223995273946\\
4	0.0138907722246968\\
5	0.0173169651195102\\
6	0.261363667588777\\
};
\addplot [color=mycolor2, line width=1pt, mark size=1.5pt, mark=*, mark options={solid, mycolor2}, forget plot]
  table[row sep=crcr]{2	0.0260990800226155\\
3	0.0287320602092014\\
4	0.0795579489943677\\
5	2.52506377273171\\
6	213.94573099681\\
};
\addplot [color=mycolor3, line width=1pt, mark size=1.5pt, mark=*, mark options={solid, mycolor3}, forget plot]
  table[row sep=crcr]{2	0.0409562483157829\\
3	0.0494020068666622\\
4	0.172720594494204\\
5	5.34199571628685\\
6	443.1092172607\\
};
\addplot [color=mycolor4, line width=1pt, mark size=1.5pt, mark=*, mark options={solid, mycolor4}, forget plot]
  table[row sep=crcr]{2	nan\\
3	nan\\
4	nan\\
5	nan\\
6	nan\\
};
\end{axis}
\end{tikzpicture}    \end{subfigure}\qquad\qquad\qquad
    \begin{subfigure}[b]{0.4\textwidth}
        \definecolor{mycolor1}{rgb}{0.00000,0.44700,0.74100}\definecolor{mycolor2}{rgb}{0.85000,0.32500,0.09800}\definecolor{mycolor3}{rgb}{0.92900,0.69400,0.12500}\definecolor{mycolor4}{rgb}{0.49400,0.18400,0.55600}\begin{tikzpicture}

\begin{axis}[width=\figurewidth,
height=0.851\figureheight,
at={(0\figurewidth,0\figureheight)},
scale only axis,
unbounded coords=jump,
xmin=2,
xmax=6,
xtick={2, 3, 4, 5, 6},
xlabel style={font=\color{white!15!black}},
xlabel={Number of levels},
ymode=log,
ymin=0.01,
ymax=10,
yminorticks=true,
yticklabels={,,},
axis background/.style={fill=white},
title style={font=\bfseries},
title={L-stable SDIRK2}
]
\addplot [color=mycolor1, line width=1pt, mark size=1.5pt, mark=*, mark options={solid, mycolor1}, forget plot]
  table[row sep=crcr]{2	0.0244843517742587\\
3	0.0269869081251273\\
4	0.0301905224229283\\
5	0.133896959791831\\
6	1.02280578017734\\
};
\addplot [color=mycolor2, line width=1pt, mark size=1.5pt, mark=*, mark options={solid, mycolor2}, forget plot]
  table[row sep=crcr]{2	0.0505531000941094\\
3	0.0681299045743775\\
4	0.436411623452794\\
5	17.5611723921844\\
6	479.767273137187\\
};
\addplot [color=mycolor3, line width=1pt, mark size=1.5pt, mark=*, mark options={solid, mycolor3}, forget plot]
  table[row sep=crcr]{2	0.0793185638122101\\
3	0.126352243452018\\
4	0.914937020783199\\
5	35.7980221535127\\
6	971.134880663888\\
};
\addplot [color=mycolor4, line width=1pt, mark size=1.5pt, mark=*, mark options={solid, mycolor4}, forget plot]
  table[row sep=crcr]{2	nan\\
3	nan\\
4	nan\\
5	nan\\
6	nan\\
};
\end{axis}
\end{tikzpicture}    \end{subfigure}\\[1ex]
    \setlength{\figurewidth}{0.4\linewidth}
    \setlength{\figureheight}{0.25\linewidth}
    \begin{subfigure}[b]{0.4\textwidth}
        \definecolor{mycolor1}{rgb}{0.00000,0.44700,0.74100}\definecolor{mycolor2}{rgb}{0.85000,0.32500,0.09800}\definecolor{mycolor3}{rgb}{0.92900,0.69400,0.12500}\definecolor{mycolor4}{rgb}{0.49400,0.18400,0.55600}\begin{tikzpicture}

\begin{axis}[width=\figurewidth,
height=0.849\figureheight,
at={(0\figurewidth,0\figureheight)},
scale only axis,
unbounded coords=jump,
xmin=2,
xmax=6,
xtick={2, 3, 4, 5, 6},
xlabel style={font=\color{white!15!black}},
xlabel={Number of levels},
ymode=log,
ymin=1e-07,
ymax=10,
yminorticks=true,
ylabel style={font=\color{white!15!black}},
ylabel={Convergence factor},
axis background/.style={fill=white},
title style={font=\bfseries},
title={A-SDIRK4},
legend style={at={(0.6,0)}, anchor=south, draw=none, fill=none, legend cell align=left}
]
\addplot [color=mycolor1, line width=1pt, mark size=1.5pt, mark=*, mark options={solid, mycolor1}, forget plot]
  table[row sep=crcr]{2	0.00473339681471841\\
3	0.00570012359076608\\
4	0.042275299622819\\
5	0.393489733360033\\
6	0.24164775242275\\
};
\addplot [color=mycolor2, line width=1pt, mark size=1.5pt, mark=*, mark options={solid, mycolor2}, forget plot]
  table[row sep=crcr]{2	0.00727312389153332\\
3	0.0147625314609372\\
4	0.256344803985121\\
5	4.07424782102752\\
6	51.3717336100215\\
};
\addplot [color=mycolor3, line width=1pt, mark size=1.5pt, mark=*, mark options={solid, mycolor3}, forget plot]
  table[row sep=crcr]{2	0.0114075353317239\\
3	0.0275644643958277\\
4	0.47238551855669\\
5	6.49731471331314\\
6	96.8873295153152\\
};
\addplot [color=mycolor4, line width=1pt, mark size=1.5pt, mark=*, mark options={solid, mycolor4}, forget plot]
  table[row sep=crcr]{2	nan\\
3	nan\\
4	nan\\
5	nan\\
6	nan\\
};

\addlegendimage{color=mycolor1, line width=1pt, mark size=1.5pt, mark=*}
\addlegendimage{color=mycolor2, line width=1pt, mark size=1.5pt, mark=*}
\addlegendimage{color=mycolor3, line width=1pt, mark size=1.5pt, mark=*}

\addlegendentry{\footnotesize~$\max_i \|\mathbf{r}_{i+1}\|_2 / \|\mathbf{r}_i\|_2$}
\addlegendentry{\footnotesize~$\| \mathcal{F}_{FCF}^{n_\ell,\Delta} \|_2$}
\addlegendentry{\footnotesize~$\sqrt{ \| \mathcal{F}_{FCF}^{n_\ell,\Delta} \|_1 \| \mathcal{F}_{FCF}^{n_\ell,\Delta} \|_\infty }$}

\end{axis}
\end{tikzpicture}
    \end{subfigure}\qquad\qquad\qquad
    \begin{subfigure}[b]{0.4\textwidth}
        \definecolor{mycolor1}{rgb}{0.00000,0.44700,0.74100}\definecolor{mycolor2}{rgb}{0.85000,0.32500,0.09800}\definecolor{mycolor3}{rgb}{0.92900,0.69400,0.12500}\definecolor{mycolor4}{rgb}{0.49400,0.18400,0.55600}\begin{tikzpicture}

\begin{axis}[width=\figurewidth,
height=0.849\figureheight,
at={(0\figurewidth,0\figureheight)},
scale only axis,
unbounded coords=jump,
xmin=2,
xmax=6,
xtick={2, 3, 4, 5, 6},
xlabel style={font=\color{white!15!black}},
xlabel={Number of levels},
ymode=log,
ymin=1e-07,
ymax=10,
yminorticks=true,
yticklabels={,,},
axis background/.style={fill=white},
title style={font=\bfseries},
title={L-stable SDIRK4}
]
\addplot [color=mycolor1, line width=1pt, mark size=1.5pt, mark=*, mark options={solid, mycolor1}, forget plot]
  table[row sep=crcr]{2	5.68527005677997e-07\\
3	5.6842020787866e-07\\
4	5.73358923892536e-07\\
5	2.58410883628764e-06\\
6	0.000243520560478492\\
};
\addplot [color=mycolor2, line width=1pt, mark size=1.5pt, mark=*, mark options={solid, mycolor2}, forget plot]
  table[row sep=crcr]{2	3.94737354378682e-05\\
3	3.94796994891262e-05\\
4	3.9862557389292e-05\\
5	0.000176618155009953\\
6	0.378508156908287\\
};
\addplot [color=mycolor3, line width=1pt, mark size=1.5pt, mark=*, mark options={solid, mycolor3}, forget plot]
  table[row sep=crcr]{2	6.19445610379228e-05\\
3	6.19510424022491e-05\\
4	6.37369536838193e-05\\
5	0.000419439521747319\\
6	0.777120455015235\\
};
\addplot [color=mycolor4, line width=1pt, mark size=1.5pt, mark=*, mark options={solid, mycolor4}, forget plot]
  table[row sep=crcr]{2	nan\\
3	nan\\
4	nan\\
5	nan\\
6	nan\\
};
\end{axis}
\end{tikzpicture}    \end{subfigure}
    \caption{Wave equation: The convergence of MGRIT with F-cycles and FCF-relaxation
        deteriorates with a larger number of time grid levels compared to MGRIT with V-cycles,
        see Figure \ref{wave-V-cycle-nt1024-FCF-relaxation-nn11x11-fig}.
        Generally, convergent algorithms are given for a larger range of time grid levels
        and observed convergence is better than the predictions from the upper bounds.
        This shows that the choice of F-cycles over V-cycles is one likely ingredient
        for future improvements of MGRIT for hyperbolic-type PDEs.}
    \label{wave-F-cycle-nt1024-FCF-relaxation-nn11x11-fig}
\end{figure}
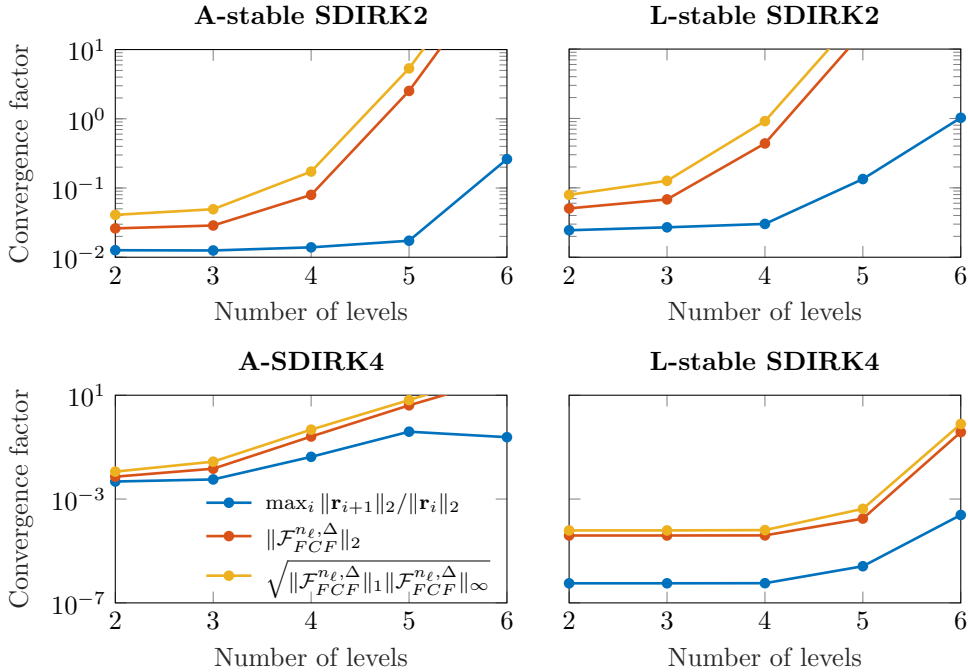
\FloatBarrier
\section{Discussion}\label{discussion-sec}
In Section \ref{numerical-results-sec},
we have compared the \emph{a priori} estimate for various developed bounds
and approximate convergence factors with observed \emph{worst-case} convergence in numerical experiments.
These investigations were performed for a fixed temporal domain (in particular, a fixed fine grid size)
and a fixed choice of the spatial discretization scheme (e.g., the large spatial step size
underresolves the discontinuous initial condition in Section \ref{diffusion-equation-sec}).
An interesting future research question is how the convergence framework can help guide the selection
of an optimal space-time discretization for the parallel-in-time integration with MGRIT
with regards to best convergence and best speedup (e.g., by combining the convergence framework
with performance models \cite{GahvariDobrevFalgoutKolevSchroderSchulzYang2016}).
Here, important aspects are the dependency of the sharpness of the bounds on material parameters,
spatial discretization and resolution, fine grid size of the temporal domain, and others.

The presented upper bounds vary in their respective time complexity and it is demonstrated that, e.g.,
using the inequality \eqref{inequality-eq} reduces the time complexity but still gives good and reasonably
sharp upper bounds.
Further, the proposed approximate convergence factors for V-cycle algorithms with F- and FCF-relaxation
(Approximation \ref{cf-nl-F-app} and \ref{cf-nl-FCF-app})
provide analytic formulae to estimate observed convergence \emph{a priori} with effectively constant time complexity
(if implemented in parallel). In the investigated cases, the approximate convergence factors yield a priori estimates
that are at least as good as more expensive bounds.
All these observations, however, rest on the assumption that the eigenvalues of the time-stepping operator
$\Phi_\ell$ can be computed, which might result in prohibitive computational cost for large-scale problems.
For Runge-Kutta schemes, the number of such operations can be reduced by only computing the eigenvalues
of the spatial operator $\mathcal{L}$ and evaluating the stability function,
as opposed to computing the eigenvalues for the family of $\Phi_\ell$ for all considered Runge-Kutta schemes.
While this might not be possible in general, the derivation
of Fourier symbols \cite{DesterckFriedhoffHowseMaclachlan2019_preprint} can provide another viable path
to reduce the time complexity of computing such bounds.
Furthermore, with prior knowledge of $\mathcal{L}$ (e.g., $\mathcal{L}$ is symmetric positive definite
or skew symmetric), the need for solving an eigenvalue problem can be avoided.
Despite the difficulty of performing multilevel convergence analysis for large-scale problems,
there is yet a lot that can be learned from investigating smaller-scale problems.

MGRIT natively supports the parallel-in-time integration of nonlinear problems
by using FAS multigrid (similar to other methods, e.g., \cite{HamonSchreiberMinion2019}).
The multilevel convergence framework developed in this work is limited to the linear case;
however, it is able to illustrate the strengths and weaknesses of the algorithm in this setting
(similar to the two-level theory developed in \cite{DobrevKolevPeterssonSchroder2017}).
Future work will investigate how theoretical results
for linear (or linearized) problems can guide the application of MGRIT for nonlinear problems.
On the other hand, (two-level) convergence theory
for Parareal \cite{GanderVandewalle2007}
was extended to the nonlinear case \cite{GanderHairer2008} and, naturally,
a similar extension of MGRIT convergence theory would be desirable.

Numerical studies demonstrate the benefit of using FCF-relaxation for parabolic model problems
(see Figure \ref{diffusion-isotropic-V-F-cycle-nt1024-FCF-relaxation-nn11x11-fig}), but
FCF-relaxation does not significantly affect convergence of MGRIT for the hyperbolic model problem.
However, theoretical results do confirm the advantage of using F-cycles over V-cycles for the hyperbolic model problem,
a result that also applies to the isotropic diffusion equation, if integrated by L-stable Runge-Kutta schemes.
Results here confirm the observation that A-stable schemes are generally less suited for parallel-in-time
integration than L-stable schemes \cite{DobrevKolevPeterssonSchroder2017}.
For the diffusion model problem, theory implies that naive multilevel Parareal (i.e. MGRIT V-cycles with
F-relaxation) does not yield a scalable algorithm and has increasing iteration counts with an increasing
number of levels for A-stable schemes. However, results here indicate that stronger cycling and relaxation,
such as F-cycles and FCF-relaxation, can alleviate this weakness.
\section{Conclusion}\label{conclusion-sec}

In this work, we develop a framework for multilevel convergence of MGRIT for linear PDEs.
This framework provides \emph{a priori} bounds and approximations for the convergence factor
of various types of MGRIT configurations, including different cycling strategies (V- and F-cycles)
and relaxation schemes ($r$FCF-relaxation).
The new theoretical results are a generalization of the two-grid theory derived
in \cite{DobrevKolevPeterssonSchroder2017} and based on similar assumptions
(for example, simultaneously diagonalizable and stable time-stepping
operators).
This work also presents a generalization of the two-level bounds
derived in \cite{DobrevKolevPeterssonSchroder2017}
to the case of arbitrary numbers of relaxation steps.

In complementary numerical studies, the theoretical results
are assessed for two different model problems, the anisotropic diffusion equation
and the second-order wave equation.
It is found that the \emph{a priori} upper bounds are relatively sharp upper bounds on observed convergence for
the diffusion equation, and accurately describe qualitative behavior for the wave equation.
Generally, these bounds are sharper for smaller numbers of time grids.

Overall, the theoretical convergence results lay the groundwork for future in-depth examination
and understanding of MGRIT.
This is especially true for the solution of hyperbolic PDEs; an application area where the
design of robust and efficient parallel-in-time algorithms has proven challenging and where
\emph{a priori} bounds can avoid (in parts) extensive numerical testing.
Thus, future development and improvement of MGRIT (different coarse-grid
operators \cite{KrzysikDesterkMaclachlanFriedhoff2019},
investigation of relation between errors and phase-shifts \cite{Ruprecht2018},
and others)
can be guided by the proposed multilevel convergence framework,
where important recommendations can be made, such as the use of higher-order L-stable
SDIRK methods and F-cycles with F-relaxation for hyperbolic problems.
\FloatBarrier
\section*{Acknowledgements}

AH would like to acknowledge the Lawrence Livermore National Laboratory, USA, and RDF/JBS
for funding and hosting a summer internship during which most of the theoretical results of this manuscript
were derived.

\bibliographystyle{siam}
\bibliography{main_strip}

\FloatBarrier
\clearpage

\renewcommand{\thesection}{SM\arabic{section}}
\setcounter{section}{0}%
\setcounter{subsection}{0}%
\setcounter{equation}{0}
\setcounter{figure}{0}
\setcounter{table}{0}
\setcounter{page}{1}
\renewcommand{\theequation}{SM\arabic{equation}}
\renewcommand{\thefigure}{SM\arabic{figure}}

\begin{center}
    \textbf{\normalsize\MakeUppercase{
    Supplementary Materials:
    Multilevel convergence analysis of multigrid-reduction-in-time$^*$}}\\[2ex]
    \footnotesize{
    ANDREAS HESSENTHALER$^{\dagger,*}$,
    BEN S.\ SOUTHWORTH$^\ddagger$,
    DAVID NORDSLETTEN$^\S$,
    OLIVER R\"OHRLE$^\dagger$,
    ROBERT D.\ FALGOUT$^\P$,}
    \scriptsize{AND} \footnotesize{JACOB B.\ SCHRODER$^\parallel$}\\[4ex]
    {\itshape \footnotesize
    ${}^\dagger$Institute for Modelling and Simulation of Biomechanical Systems,
    University of Stuttgart,
    Pfaffenwaldring 5a, 70569 Stuttgart, Germany\\[1ex]
    ${}^\ddagger$Department of Applied Mathematics, University of Colorado at Boulder, CO, USA\\[1ex]
    ${}^\S$School of Biomedical Engineering and Imaging Sciences,
    King's College London, 4th FL Rayne Institute,
    St Thomas Hospital, London, SE1 7EH\\[1ex]
    ${}^\P$Center for Applied Scientific Computing, Lawrence Livermore National Laboratory,
    P.O.\ Box 808, L-561, Livermore, CA 94551, USA\\[1ex]
    ${}^\parallel$Department of Mathematics and Statistics, University of New Mexico,
    310 SMLC, Albuquerque, NM 87131, USA}~\\[3ex]
    \scriptsize{$^*$This work performed under the auspices of the U.S. Department of Energy by Lawrence Livermore National Laboratory under Contract
DE-AC52-07NA27344, LLNL-JRNL-763460.}
\end{center}
~\\[1ex]

\section{Implementation of analytic and numerical bounds}\label{code-suppsec}

We provide a parallel C++ implementation of all derived bounds and approximate convergence
factors as part of this manuscript and as open-source software.\footnote{Github repository:
\url{github.com/XBraid/XBraid-convergence-est}}
The code takes the (complex or real) eigenvalues of the family of $\Phi_\ell$ as input along with a definition
of the desired MGRIT algorithm (V- or F-cycles, relaxation scheme, number of levels, coarsening factors, etc.)
and computes the bound or approximate convergence factor values.

We further implemented functionality for the user to supply the eigenvalues of a spatial operator and to compute
the respective eigenvalues of $\Phi_\ell$ based on the stability function of a given Runge-Kutta time integration scheme
and its Butcher tableau.

\section{Derivations: Bounds for MGRIT residual and error propagation}\label{bounds-MGRIT-error-propagation-appsec}

Here, we present derivations and proofs that were omitted in Section \ref{bounds-MGRIT-error-propagation-sec}.

\subsection{Two-level MGRIT with \lowercase{$r$}FCF-relaxation}\label{two-level-MGRIT-with-rFCF-relaxation-appsec}

The coarse-grid error propagator follows from Equation \eqref{E-rFCF-nl2-eqn} for $n_\ell = 2$,
\begin{equation*}
    \mathcal{E}_{rFCF}^{n_\ell = 2, \Delta} =  R_{I_0} \mathcal{E}_{rFCF}^{n_\ell = 2} P_0
    = (I - A_1^{-1} R_0 A_0 P_0) (I - R_0 A_0 P_0)^r
\end{equation*}
with,
\begin{align}
    R_0 A_0 P_0 &= \begin{bmatrix}
        I \\
        - \Phi_0^{m_0} & I \\
        & - \Phi_0^{m_0} & I \\
        && \ddots & \ddots \\
    \end{bmatrix},
    \label{R0A0P0-eq} \\
    I - A_1^{-1} R_0 A_0 P_0
    &= - \begin{bmatrix}
        0    \\
        \Phi_1 - \Phi_0^{m_0} & 0    \\
        \Phi_1 \left( \Phi_1 - \Phi_0^{m_0} \right) & \Phi_1 - \Phi_0^{m_0} & 0    \\
        \vdots &&& \ddots    \\
        \Phi_1^{N_1 - 2} \left( \Phi_1 - \Phi_0^{m_0} \right) & \cdots &&& 0    \\
    \end{bmatrix},
\end{align}
and
\begin{align*}
    (I - R_0 A_0 P_0)^r
    &= \begin{bmatrix}
        0 \\
        \Phi_0^{m_0} & 0 \\
        & \Phi_0^{m_0} & 0 \\
        && \ddots & \ddots \\
    \end{bmatrix} (I - R_0 A_0 P_0)^{r - 1} \\
    &= \begin{bmatrix}
        0 \\
        0 \\
        \Phi_0^{2 m_0} & 0 \\
        & \Phi_0^{2 m_0} & 0 \\
        && \ddots & \ddots \\
    \end{bmatrix} (I - R_0 A_0 P_0)^{r - 2} \\
    &= \ldots
    = \begin{bmatrix}
        0 \\
        \vdots \\
        0 \\
        \Phi_0^{r m_0} & 0 \\
        & \Phi_0^{r m_0} & 0 \\
        && \ddots & \ddots \\
    \end{bmatrix}.
\end{align*}

\subsection{Three-level V-cycles with F-relaxation}
\label{three-level-V-cycles-with-F-relaxation-appsec}

Evaluating the error propagator in Equation \eqref{E-F-nl-eqn} for a three-level V-cycle with F-relaxation
on the coarse-grid yields,
\begin{alignat*}{4}
    \mathcal{E}_{F}^{n_\ell = 3,\Delta}
    =  R_{I_0} \mathcal{E}_F^{n_\ell = 3} P_0
    & = I
    - \left[ P_1 A_2^{-1} R_1
    + S_1 (S_1^T A_1 S_1)^{-1} S_1^T \right] R_0 A_0 P_0,
\end{alignat*}
where,
\begin{alignat*}{4}
\resizebox{0.9\textwidth}{!}{$
    S_1 (S_1^T A_1 S_1)^{-1} S_1^T = \begin{bmatrix}
        0    \\
        & I    \\
        & \Phi_1        & I    \\
        & \Phi_1^2    & \Phi_1    & I    \\
        & \vdots            &            &    & \ddots    \\
        & \Phi_1^{m_1 - 2}    & \Phi_1^{m_1 - 3}    & \cdots    & \Phi_1    & I    \\
        &&&&&& 0    \\
        &&&&&&& I    \\
        &&&&&&& \Phi_1        & I    \\
        &&&&&&& \Phi_1^2    & \Phi_1    & I    \\
        &&&&&&& \vdots            &            &    & \ddots    \\
        &&&&&&& \Phi_1^{m_1 - 2}    & \Phi_1^{m_1 - 3}    & \cdots    & \Phi_1    & I    \\
        &&&&&&&&&&&& 0    \\
        &&&&&&&&&&&&& \ddots    \\
        &&&&&&&&&&&&&& 0    \\
    \end{bmatrix}$}
    \numberthis
    \label{invAff-eq}
\end{alignat*}
and
\begin{equation}
\resizebox{0.9\textwidth}{!}{$
    P_1 A_2^{-1} R_1
    = \begin{bmatrix}
        I    \\
        \Phi_1    \\
        \Phi_1^2    \\
        \vdots    \\
        \Phi_1^{m_1 - 1}    \\
        \Phi_2    & \Phi_1^{m_1 - 1}    & \Phi_1^{m_1 - 2}    & \cdots    & I    \\
        \Phi_1 \Phi_2    & \Phi_1 \Phi_1^{m_1 - 1}    & \Phi_1 \Phi_1^{m_1 - 2}    & \cdots    & \Phi_1    \\
        \vdots    & \vdots    & \vdots    && \vdots    \\
        \Phi_1^{m_1 - 1} \Phi_2    & \Phi_1^{m_1 - 1} \Phi_1^{m_1 - 1}    & \Phi_1^{m_1 - 1} I \Phi_1^{m_1 - 2}    & \cdots    & \Phi_1^{m_1 - 1}    \\
        \vdots    & \vdots    & \vdots    && \vdots    & \ddots    \\
        \vdots    & \vdots    & \vdots    && \vdots    \\
        \Phi_2^{N_2-2}    & \Phi_2^{N_2 - 3} \Phi_1^{m_1 - 1}    & \Phi_2^{N_2 - 3} \Phi_1^{m_1 - 2}    & \cdots    & \Phi_2^{N_2 - 3}    \\
        \Phi_1 \Phi_2^{N_2-2}    & \Phi_1 \Phi_2^{N_2 - 3} \Phi_1^{m_1 - 1}    & \Phi_1 \Phi_2^{N_2 - 3} \Phi_1^{m_1 - 2}    & \cdots    & \Phi_1 \Phi_2^{N_2 - 3}    \\
        \vdots    & \vdots    & \vdots    && \vdots    \\
        \Phi_1^{m_1 - 1} \Phi_2^{N_2-2}    & \Phi_1^{m_1 - 1} \Phi_2^{N_2 - 3} \Phi_1^{m_1 - 1}    & \Phi_1^{m_1 - 1} \Phi_2^{N_2 - 3} \Phi_1^{m_1 - 2}    & \cdots    & \Phi_1^{m_1 - 1} \Phi_2^{N_2 - 3}    \\
        \Phi_2^{N_2 - 1}    & \Phi_2^{N_2 - 2} \Phi_1^{m_1 - 1}    & \Phi_2^{N_2 - 2} \Phi_1^{m_1 - 2}    & \cdots    & \Phi_2^{N_2 - 2}    \\
    \end{bmatrix}$}.
    \label{PinvAR-eq}
\end{equation}
With \eqref{R0A0P0-eq}, \eqref{invAff-eq} and \eqref{PinvAR-eq} follows the nilpotent operator,
\begin{equation}
\resizebox{0.9\textwidth}{!}{$
    \mathcal{E}_{F}^{n_\ell = 3,\Delta}
    = \begin{bmatrix}
        0    \\
        \Phi_0^{m_0} - \Phi_1    & 0    \\
        \Phi_1 ( \Phi_0^{m_0} - \Phi_1 )    & \Phi_0^{m_0} - \Phi_1 & 0 \\
        \Phi_1^2 ( \Phi_0^{m_0} - \Phi_1 )    & \Phi_1 ( \Phi_0^{m_0} - \Phi_1 ) & \Phi_0^{m_0} - \Phi_1    \\
        \vdots    & \vdots    & \vdots    & \ddots    \\
        \Phi_1^{m_1 - 2} ( \Phi_0^{m_0} - \Phi_1 )    & \Phi_1^{m_1 - 3} ( \Phi_0^{m_0} - \Phi_1 )    & \Phi_1^{m_1 - 4} ( \Phi_0^{m_0} - \Phi_1 )    \\
        \Phi_1^{m_1 - 1} \Phi_0^{m_0} - \Phi_2    & \Phi_1^{m_1 - 2} ( \Phi_0^{m_0} - \Phi_1 )    & \Phi_1^{m_1 - 3} ( \Phi_0^{m_0} - \Phi_1 )    \\
        \Phi_1 ( \Phi_1^{m_1 - 1} \Phi_0^{m_0} - \Phi_2 )    & \Phi_1 \Phi_1^{m_1 - 2} ( \Phi_0^{m_0} - \Phi_1 )    & \Phi_1 \Phi_1^{m_1 - 3} ( \Phi_0^{m_0} - \Phi_1 )    \\
        \vdots    & \vdots    & \vdots    \\
        \Phi_1^{m_1 - 1} ( \Phi_1^{m_1 - 1} \Phi_0^{m_0} - \Phi_2 )    & \Phi_1^{m_1 - 1} \Phi_1^{m_1 - 2} ( \Phi_0^{m_0} - \Phi_1 )    & \Phi_1^{m_1 - 1} \Phi_1^{m_1 - 3} ( \Phi_0^{m_0} - \Phi_1 )    \\
        \vdots    & \vdots    & \vdots    \\
        \vdots    & \vdots    & \vdots    \\
        \Phi_2^{N_2 - 3} ( \Phi_1^{m_1 - 1} \Phi_0^{m_0} - \Phi_2 )    & \Phi_2^{N_2 - 3} \Phi_1^{m_1 - 2} ( \Phi_0^{m_0} - \Phi_1 )    & \Phi_2^{N_2 - 3} \Phi_1^{m_1 - 3} ( \Phi_0^{m_0} - \Phi_1 )    \\
        \Phi_1 \Phi_2^{N_2 - 3} ( \Phi_1^{m_1 - 1} \Phi_0^{m_0} - \Phi_2 )    & \Phi_1 \Phi_2^{N_2 - 3} \Phi_1^{m_1 - 2} ( \Phi_0^{m_0} - \Phi_1 )    & \Phi_1 \Phi_2^{N_2 - 3} \Phi_1^{m_1 - 3} ( \Phi_0^{m_0} - \Phi_1 )    \\
        \vdots    & \vdots    & \vdots    \\
        \Phi_1^{m_1 - 1} \Phi_2^{N_2 - 3} ( \Phi_1^{m_1 - 1} \Phi_0^{m_0} - \Phi_2 )    & \Phi_1^{m_1 - 1} \Phi_2^{N_2 - 3} \Phi_1^{m_1 - 2} ( \Phi_0^{m_0} - \Phi_1 )    & \Phi_1^{m_1 - 1} \Phi_2^{N_2 - 3} \Phi_1^{m_1 - 3} ( \Phi_0^{m_0} - \Phi_1 )    \\
        \Phi_2^{N_2 - 2} ( \Phi_1^{m_1 - 1} \Phi_0^{m_0} - \Phi_2 )    & \Phi_2^{N_2 - 2} \Phi_1^{m_1 - 2} ( \Phi_0^{m_0} - \Phi_1 )    & \Phi_2^{N_2 - 2} \Phi_1^{m_1 - 3} ( \Phi_0^{m_0} - \Phi_1 )    & \cdots    \\
    \end{bmatrix}.
$}
    \label{E-F-3-level-evaluated-eq}
\end{equation}

\subsubsection{Four-level V-cycles with F-relaxation}\label{four-level-V-cycles-with-F-relaxation-appsec}

Evaluating the error propagator in Equation \eqref{E-F-nl-eqn} for a four-level V-cycle with F-relaxation
on the coarse-grid yields,
\begin{alignat*}{4}
    &\mathcal{E}_{F}^{n_\ell = 4,\Delta}
    =  R_{I_0} \mathcal{E}_F^{n_\ell = 4} P_0 \\
    \quad&= I
    - \left[ P_1 P_2 A_3^{-1} R_2 R_1
    + S_1 (S_1^T A_1 S_1)^{-1} S_1^T
    + P_1 S_2 (S_2^T A_2 S_2)^{-1} S_2^T R_1 \right] R_0 A_0 P_0,
\end{alignat*}
where,
\begin{center}
\begin{sideways}
\parbox{\textheight}{
\begin{alignat*}{4}
    P_1 & S_2 (S_2^T A_2 S_2)^{-1} S_2^T R_1
    &= \resizebox{.75\hsize}{!}{$
    \begin{bmatrix}
        0 \\
        \vdots \\
        0
        & \Phi_1^{m_1 - 1}                                   & \cdots & I                                 \\
        & \vdots                                             & \cdots & \vdots                            \\
        & \Phi_1^{m_1 - 1} \Phi_1^{m_1 - 1}                  & \cdots & \Phi_1^{m_1 - 1}                  \\
        & \Phi_2 \Phi_1^{m_1 - 1}                            & \cdots & \Phi_2                            & \Phi_1^{m_1 - 1}                                   & \cdots & I                                 \\
        & \vdots                                             &        & \vdots                            & \vdots                                             &        & \vdots                            \\
        & \Phi_1^{m_1 - 1} \Phi_2 \Phi_1^{m_1 - 1}           & \cdots & \Phi_1^{m_1 - 1} \Phi_2           & \Phi_1^{m_1 - 1} \Phi_1^{m_1 - 1}                  & \cdots & \Phi_1^{m_1 - 1}                  \\
        & \vdots                                             &        & \vdots                            &                                                    &        &                                   &    \ddots    \\
        & \vdots                                             &        & \vdots                            &                                                    &        &                                   &    \\
        & \Phi_2^{m_2 - 2} \Phi_1^{m_1 - 1}                  & \cdots & \Phi_2^{m_2 - 2}                  & \Phi_2^{m_2 - 3} \Phi_1^{m_1 - 1}                  & \cdots & \Phi_2^{m_2 - 3}                  & \cdots    & \cdots    & I    \\
        & \vdots                                             &        & \vdots                            & \vdots                                             &        & \vdots                            &           &           & \vdots \\
        & \Phi_1^{m_1 - 1} \Phi_2^{m_2 - 2} \Phi_1^{m_1 - 1} & \cdots & \Phi_1^{m_1 - 1} \Phi_2^{m_2 - 2} & \Phi_1^{m_1 - 1} \Phi_2^{m_2 - 3} \Phi_1^{m_1 - 1} & \cdots & \Phi_1^{m_1 - 1} \Phi_2^{m_2 - 3} & \cdots    & \cdots    & \Phi_1^{m_1 - 1}    \\
        &&&&&&&&&& 0    \\
        &&&&&&&&&&& \ddots    \\
        &&&&&&&&&&&& 0    \\
    \end{bmatrix}
    $}
    \numberthis
    \label{PinvAffR-eq}
\end{alignat*}
}
\end{sideways}
\end{center}
and,
\begin{alignat}{4}
\resizebox{.9\hsize}{!}{$
    P_1 P_2 A_3^{-1} R_2 R_1
    =
    \begin{bmatrix}
        I                                                  \\
        \vdots                                             \\
        \Phi_1^{m_1 - 1}                                   \\
        \Phi_2                                             \\
        \vdots                                             \\
        \Phi_1^{m_1 - 1} \Phi_2^{m_2 - 1}                  \\
        \Phi_3                                             & \Phi_2^{m_2 - 1} \Phi_1^{m_1 - 1}                                                    & \cdots & \Phi_2^{m_2 - 1}                                                    & \Phi_2^{m_2 - 2} \Phi_1^{m_1 - 1}                                                    & \cdots  & I                                                  \\
        \vdots                                             & \vdots                                                                               &        & \vdots                                                              & \vdots                                                                               &         & \vdots                                             \\
        \Phi_1^{m_1 - 1} \Phi_3                            & \Phi_1^{m_1 - 1} \Phi_2^{m_2 - 1} \Phi_1^{m_1 - 1}                                   & \cdots & \Phi_1^{m_1 - 1} \Phi_2^{m_2 - 1}                                   & \Phi_1^{m_1 - 1} \Phi_2^{m_2 - 2} \Phi_1^{m_1 - 1}                                   & \cdots  & \Phi_1^{m_1 - 1}                                   \\
        \vdots                                             & \vdots                                                                               &        & \vdots                                                              & \vdots                                                                               &         & \vdots                                             \\
        \vdots                                             & \vdots                                                                               &        & \vdots                                                              & \vdots                                                                               &         & \vdots                                             \\
        \Phi_2^{m_2 - 1} \Phi_3                            & \Phi_2^{m_2 - 1} \Phi_2^{m_2 - 1} \Phi_1^{m_1 - 1}                                   & \cdots & \Phi_2^{m_2 - 1} \Phi_2^{m_2 - 1}                                   & \Phi_2^{m_2 - 1} \Phi_2^{m_2 - 2} \Phi_1^{m_1 - 1}                                   & \cdots  & \Phi_2^{m_2 - 1}                                   \\
        \vdots                                             & \vdots                                                                               &        & \vdots                                                              & \vdots                                                                               &         & \vdots                                             \\
        \Phi_1^{m_1 - 1} \Phi_2^{m_2 - 1} \Phi_3           & \Phi_1^{m_1 - 1} \Phi_2^{m_2 - 1} \Phi_2^{m_2 - 1} \Phi_1^{m_1 - 1}                  & \cdots & \Phi_1^{m_1 - 1} \Phi_2^{m_2 - 1} \Phi_2^{m_2 - 1}                  & \Phi_1^{m_1 - 1} \Phi_2^{m_2 - 1} \Phi_2^{m_2 - 2} \Phi_1^{m_1 - 1}                  & \cdots  & \Phi_1^{m_1 - 1} \Phi_2^{m_2 - 1}                  \\
        \vdots                                             & \vdots                                                                               &        & \vdots                                                              & \vdots                                                                               &         & \vdots & \ddots                                    \\
        \vdots                                             & \vdots                                                                               &        & \vdots                                                              & \vdots                                                                               &         & \vdots                                             \\
        \Phi_1^{m_1 - 1} \Phi_3^{N_3 - 2}                  & \Phi_1^{m_1 - 1} \Phi_3^{N_3 - 3} \Phi_2^{m_2 - 1} \Phi_1^{m_1 - 1}                  & \cdots & \Phi_1^{m_1 - 1} \Phi_3^{N_3 - 3} \Phi_2^{m_2 - 1}                  & \Phi_1^{m_1 - 1} \Phi_3^{N_3 - 3} \Phi_2^{m_2 - 2} \Phi_1^{m_1 - 1}                  & \cdots  & \Phi_1^{m_1 - 1} \Phi_3^{N_3 - 3}                  \\
        \Phi_2 \Phi_3^{N_3 - 2}                            & \Phi_2 \Phi_3^{N_3 - 3} \Phi_2^{m_2 - 1} \Phi_1^{m_1 - 1}                            & \cdots & \Phi_2 \Phi_3^{N_3 - 3} \Phi_2^{m_2 - 1}                            & \Phi_2 \Phi_3^{N_3 - 3} \Phi_2^{m_2 - 2} \Phi_1^{m_1 - 1}                            & \cdots  & \Phi_2 \Phi_3^{N_3 - 3}                            \\
        \vdots                                             & \vdots                                                                               &        & \vdots                                                              & \vdots                                                                               &         & \vdots                                             \\
        \Phi_1^{m_1 - 1} \Phi_2 \Phi_3^{N_3 - 2}           & \Phi_1^{m_1 - 1} \Phi_2 \Phi_3^{N_3 - 3} \Phi_2^{m_2 - 1} \Phi_1^{m_1 - 1}           & \cdots & \Phi_1^{m_1 - 1} \Phi_2 \Phi_3^{N_3 - 3} \Phi_2^{m_2 - 1}           & \Phi_1^{m_1 - 1} \Phi_2 \Phi_3^{N_3 - 3} \Phi_2^{m_2 - 2} \Phi_1^{m_1 - 1}           & \cdots  & \Phi_1^{m_1 - 1} \Phi_2 \Phi_3^{N_3 - 3}           \\
        \vdots                                             & \vdots                                                                               &        & \vdots                                                              & \vdots                                                                               &         & \vdots                                             \\
        \vdots                                             & \vdots                                                                               &        & \vdots                                                              & \vdots                                                                               &         & \vdots                                             \\
        \Phi_2^{m_2 - 1} \Phi_3^{N_3 - 2}                  & \Phi_2^{m_2 - 1} \Phi_3^{N_3 - 3} \Phi_2^{m_2 - 1} \Phi_1^{m_1 - 1}                  & \cdots & \Phi_2^{m_2 - 1} \Phi_3^{N_3 - 3} \Phi_2^{m_2 - 1}                  & \Phi_2^{m_2 - 1} \Phi_3^{N_3 - 3} \Phi_2^{m_2 - 2} \Phi_1^{m_1 - 1}                  & \cdots  & \Phi_2^{m_2 - 1} \Phi_3^{N_3 - 3}                  \\
        \vdots                                             & \vdots                                                                               &        & \vdots                                                              & \vdots                                                                               &         & \vdots                                             \\
        \Phi_1^{m_1 - 1} \Phi_2^{m_2 - 1} \Phi_3^{N_3 - 2} & \Phi_1^{m_1 - 1} \Phi_2^{m_2 - 1} \Phi_3^{N_3 - 3} \Phi_2^{m_2 - 1} \Phi_1^{m_1 - 1} & \cdots & \Phi_1^{m_1 - 1} \Phi_2^{m_2 - 1} \Phi_3^{N_3 - 3} \Phi_2^{m_2 - 1} & \Phi_1^{m_1 - 1} \Phi_2^{m_2 - 1} \Phi_3^{N_3 - 3} \Phi_2^{m_2 - 2} \Phi_1^{m_1 - 1} & \cdots  & \Phi_1^{m_1 - 1} \Phi_2^{m_2 - 1} \Phi_3^{N_3 - 3} \\
        \Phi_3^{N_3 - 1}                                   & \Phi_3^{N_3 - 2} \Phi_2^{m_2 - 1} \Phi_1^{m_1 - 1}                                   & \cdots & \Phi_3^{N_3 - 2} \Phi_2^{m_2 - 1}                                   & \Phi_3^{N_3 - 2} \Phi_2^{m_2 - 2} \Phi_1^{m_1 - 1}                                   & \cdots  & \Phi_3^{N_3 - 2}                                   \\
    \end{bmatrix}.
    $}
    \label{PPinvARR-eq}
\end{alignat}
The sum of \eqref{invAff-eq}, \eqref{PinvAffR-eq} and \eqref{PPinvARR-eq} yields,
\begin{alignat*}{4}
    P_1 & P_2 A_3^{-1} R_2 R_1
    + S_1 (S_1^T A_1 S_1)^{-1} S_1^T
    + P_1 S_2 (S_2^T A_2 S_2)^{-1} S_2^T R_1    \\
    &= \resizebox{.8\hsize}{!}{$
    \begin{bmatrix}
        I                                                  \\
        \Phi_1                                             & I                                                                                    \\
        \vdots                                             & \vdots                                                                               & \ddots \\
        \Phi_1^{m_1 - 1}                                   & \Phi_1^{m_1 - 2}                                                                     &        \\
        \Phi_2                                             & \Phi_1^{m_1 - 1}                                                                     & \cdots & I     &  \\
        \vdots                                             & \vdots                                                                               &        & \vdots & \ddots \\
        \vdots                                             & \vdots                                                                               &        & \vdots \\
        \Phi_1^{m_1 - 1} \Phi_2^{m_2 - 1}                  & \Phi_1^{m_1 - 1} \Phi_2^{m_2 - 2} \Phi_1^{m_1 - 1}                                   & \cdots & \Phi_1^{m_1 - 1} \Phi_2^{m_2 - 2}                                   & \Phi_1^{m_1 - 1} \Phi_2^{m_2 - 3} \Phi_1^{m_1 - 1}                                   & \ddots \\
        \Phi_3                                             & \Phi_2^{m_2 - 1} \Phi_1^{m_1 - 1}                                                    & \cdots & \Phi_2^{m_2 - 1}                                                    & \Phi_2^{m_2 - 2} \Phi_1^{m_1 - 1}                                                    & \cdots & I                                                  \\
        \vdots                                             & \vdots                                                                               &        & \vdots                                                              & \vdots                                                                               &        & \vdots                                             \\
        \Phi_1^{m_1 - 1} \Phi_3                            & \Phi_1^{m_1 - 1} \Phi_2^{m_2 - 1} \Phi_1^{m_1 - 1}                                   & \cdots & \Phi_1^{m_1 - 1} \Phi_2^{m_2 - 1}                                   & \Phi_1^{m_1 - 1} \Phi_2^{m_2 - 2} \Phi_1^{m_1 - 1}                                   & \cdots & \Phi_1^{m_1 - 1}                                   \\
        \vdots                                             & \vdots                                                                               &        & \vdots                                                              & \vdots                                                                               &        & \vdots                                             \\
        \vdots                                             & \vdots                                                                               &        & \vdots                                                              & \vdots                                                                               &        & \vdots                                             \\
        \Phi_2^{m_2 - 1} \Phi_3                            & \Phi_2^{m_2 - 1} \Phi_2^{m_2 - 1} \Phi_1^{m_1 - 1}                                   & \cdots & \Phi_2^{m_2 - 1} \Phi_2^{m_2 - 1}                                   & \Phi_2^{m_2 - 1} \Phi_2^{m_2 - 2} \Phi_1^{m_1 - 1}                                   & \cdots & \Phi_2^{m_2 - 1}                                   \\
        \vdots                                             & \vdots                                                                               &        & \vdots                                                              & \vdots                                                                               &        & \vdots                                             \\
        \Phi_1^{m_1 - 1} \Phi_2^{m_2 - 1} \Phi_3           & \Phi_1^{m_1 - 1} \Phi_2^{m_2 - 1} \Phi_2^{m_2 - 1} \Phi_1^{m_1 - 1}                  & \cdots & \Phi_1^{m_1 - 1} \Phi_2^{m_2 - 1} \Phi_2^{m_2 - 1}                  & \Phi_1^{m_1 - 1} \Phi_2^{m_2 - 1} \Phi_2^{m_2 - 2} \Phi_1^{m_1 - 1}                  & \cdots & \Phi_1^{m_1 - 1} \Phi_2^{m_2 - 1}                  \\
        \vdots                                             & \vdots                                                                               &        & \vdots                                                              & \vdots                                                                               &        & \vdots & \ddots                                    \\
        \vdots                                             & \vdots                                                                               &        & \vdots                                                              & \vdots                                                                               &        & \vdots                                             \\
        \Phi_1^{m_1 - 1} \Phi_3^{N_3 - 2}                  & \Phi_1^{m_1 - 1} \Phi_3^{N_3 - 3} \Phi_2^{m_2 - 1} \Phi_1^{m_1 - 1}                  & \cdots & \Phi_1^{m_1 - 1} \Phi_3^{N_3 - 3} \Phi_2^{m_2 - 1}                  & \Phi_1^{m_1 - 1} \Phi_3^{N_3 - 3} \Phi_2^{m_2 - 2} \Phi_1^{m_1 - 1}                  & \cdots & \Phi_1^{m_1 - 1} \Phi_3^{N_3 - 3}                  \\
        \Phi_2 \Phi_3^{N_3 - 2}                            & \Phi_2 \Phi_3^{N_3 - 3} \Phi_2^{m_2 - 1} \Phi_1^{m_1 - 1}                            & \cdots & \Phi_2 \Phi_3^{N_3 - 3} \Phi_2^{m_2 - 1}                            & \Phi_2 \Phi_3^{N_3 - 3} \Phi_2^{m_2 - 2} \Phi_1^{m_1 - 1}                            & \cdots & \Phi_2 \Phi_3^{N_3 - 3}                            \\
        \vdots                                             & \vdots                                                                               &        & \vdots                                                              & \vdots                                                                               &        & \vdots                                             \\
        \Phi_1^{m_1 - 1} \Phi_2 \Phi_3^{N_3 - 2}           & \Phi_1^{m_1 - 1} \Phi_2 \Phi_3^{N_3 - 3} \Phi_2^{m_2 - 1} \Phi_1^{m_1 - 1}           & \cdots & \Phi_1^{m_1 - 1} \Phi_2 \Phi_3^{N_3 - 3} \Phi_2^{m_2 - 1}           & \Phi_1^{m_1 - 1} \Phi_2 \Phi_3^{N_3 - 3} \Phi_2^{m_2 - 2} \Phi_1^{m_1 - 1}           & \cdots & \Phi_1^{m_1 - 1} \Phi_2 \Phi_3^{N_3 - 3}           \\
        \vdots                                             & \vdots                                                                               &        & \vdots                                                              & \vdots                                                                               &        & \vdots                                             \\
        \vdots                                             & \vdots                                                                               &        & \vdots                                                              & \vdots                                                                               &        & \vdots                                             \\
        \Phi_2^{m_2 - 1} \Phi_3^{N_3 - 2}                  & \Phi_2^{m_2 - 1} \Phi_3^{N_3 - 3} \Phi_2^{m_2 - 1} \Phi_1^{m_1 - 1}                  & \cdots & \Phi_2^{m_2 - 1} \Phi_3^{N_3 - 3} \Phi_2^{m_2 - 1}                  & \Phi_2^{m_2 - 1} \Phi_3^{N_3 - 3} \Phi_2^{m_2 - 2} \Phi_1^{m_1 - 1}                  & \cdots & \Phi_2^{m_2 - 1} \Phi_3^{N_3 - 3}                  \\
        \vdots                                             & \vdots                                                                               &        & \vdots                                                              & \vdots                                                                               &        & \vdots                                             \\
        \Phi_1^{m_1 - 1} \Phi_2^{m_2 - 1} \Phi_3^{N_3 - 2} & \Phi_1^{m_1 - 1} \Phi_2^{m_2 - 1} \Phi_3^{N_3 - 3} \Phi_2^{m_2 - 1} \Phi_1^{m_1 - 1} & \cdots & \Phi_1^{m_1 - 1} \Phi_2^{m_2 - 1} \Phi_3^{N_3 - 3} \Phi_2^{m_2 - 1} & \Phi_1^{m_1 - 1} \Phi_2^{m_2 - 1} \Phi_3^{N_3 - 3} \Phi_2^{m_2 - 2} \Phi_1^{m_1 - 1} & \cdots & \Phi_1^{m_1 - 1} \Phi_2^{m_2 - 1} \Phi_3^{N_3 - 3} \\
        \Phi_3^{N_3 - 1}                                   & \Phi_3^{N_3 - 2} \Phi_2^{m_2 - 1} \Phi_1^{m_1 - 1}                                   & \cdots & \Phi_3^{N_3 - 2} \Phi_2^{m_2 - 1}                                   & \Phi_3^{N_3 - 2} \Phi_2^{m_2 - 2} \Phi_1^{m_1 - 1}                                   & \cdots & \Phi_3^{N_3 - 2}                                   \\
    \end{bmatrix},
    $} \numberthis
    \label{PPinvARR-plus-invAff-plus-PinvAffR-eq}
\end{alignat*}
where we notice that the sparsity patterns are non-overlapping. Then, the error propagator is given as,

\begin{center}
\begin{sideways}
\parbox{\textheight}{
\begin{alignat*}{4}
    \mathcal{E}_{F}^{n_\ell = 4,\Delta}
    &= \resizebox{.8\hsize}{!}{$
    - \begin{bmatrix}
        0                                                                                                            \\
        \Phi_1 - \Phi_0^{m_0}                                                                                        & 0                                                                                                            \\
        \Phi_1 (\Phi_1 - \Phi_0^{m_0})                                                                               & \Phi_1 - \Phi_0^{m_0}                                                                                        \\
        \vdots                                                                                                       & \vdots                                                                                                       & \ddots \\
        \Phi_1^{m_1 - 2} (\Phi_1 - \Phi_0^{m_0})                                                                     & \Phi_1^{m_1 - 3} (\Phi_1 - \Phi_0^{m_0})                                                                     & \cdots & \\
        \Phi_2 - \Phi_1^{m_1 - 1} \Phi_0^{m_0}                                                                       & \Phi_1^{m_1 - 2} (\Phi_1 - \Phi_0^{m_0})                                                                     & \cdots & 0 \\
        \vdots                                                                                                       & \vdots                                                                                                       &        & \vdots                                                                                                      \\
        \vdots                                                                                                       & \vdots                                                                                                       &        & \vdots                                                                                                      & \ddots \\
        \Phi_1^{m_1 - 1} \Phi_2^{m_2 - 2} (\Phi_2 - \Phi_1^{m_1 - 1} \Phi_0^{m_0})                                   & \Phi_1^{m_1 - 1} \Phi_2^{m_2 - 2} \Phi_1^{m_1 - 2} (\Phi_1 - \Phi_0^{m_0})                                   & \cdots & \Phi_1^{m_1 - 1} \Phi_2^{m_2 - 3} (\Phi_2 - \Phi_1^{m_1 - 1} \Phi_0^{m_0})                                  &        & \ddots \\
        \Phi_3 - \Phi_2^{m_2 - 1} \Phi_1^{m_1 - 1} \Phi_0^{m_0}                                                      & \Phi_2^{m_2 - 1} \Phi_1^{m_1 - 2} (\Phi_1 - \Phi_0^{m_0})                                                    & \cdots & \Phi_2^{m_2 - 2} (\Phi_2 - \Phi_1^{m_1 - 1} \Phi_0^{m_0})                                                   & \cdots & \cdots & 0 \\
        \vdots                                                                                                       & \vdots                                                                                                       &        & \vdots                                                                                                      &        &        & \vdots & \ddots \\
        \Phi_1^{m_1 - 1} (\Phi_3 - \Phi_2^{m_2 - 1} \Phi_1^{m_1 - 1} \Phi_0^{m_0})                                   & \Phi_1^{m_1 - 1} \Phi_2^{m_2 - 1} \Phi_1^{m_1 - 2} (\Phi_1 - \Phi_0^{m_0})                                   & \cdots & \Phi_1^{m_1 - 1} \Phi_2^{m_2 - 2} (\Phi_2 - \Phi_1^{m_1 - 1} \Phi_0^{m_0})                                  & \cdots & \cdots & \Phi_1^{m_1 - 2} (\Phi_1 - \Phi_0^{m_0}) \\
        \vdots                                                                                                       & \vdots                                                                                                       &        & \vdots                                                                                                      &        &        & \vdots \\
        \vdots                                                                                                       & \vdots                                                                                                       &        & \vdots                                                                                                      &        &        & \vdots \\
        \Phi_2^{m_2 - 1} (\Phi_3 - \Phi_2^{m_2 - 1} \Phi_1^{m_1 - 1} \Phi_0^{m_0})                                   & \Phi_2^{m_2 - 1} \Phi_2^{m_2 - 1} \Phi_1^{m_1 - 2} (\Phi_1 - \Phi_0^{m_0})                                   & \cdots & \Phi_2^{m_2 - 1} \Phi_2^{m_2 - 2} (\Phi_2 - \Phi_1^{m_1 - 1} \Phi_0^{m_0})                                  & \cdots &        & \Phi_2^{m_2 - 2} (\Phi_2 - \Phi_1^{m_1 - 1} \Phi_0^{m_0}) \\
        \vdots                                                                                                       & \vdots                                                                                                       &        & \vdots                                                                                                      &        &        & \vdots \\
        \Phi_1^{m_1 - 1} \Phi_2^{m_2 - 1} (\Phi_3 - \Phi_2^{m_2 - 1} \Phi_1^{m_1 - 1} \Phi_0^{m_0})                  & \Phi_1^{m_1 - 1} \Phi_2^{m_2 - 1} \Phi_2^{m_2 - 1} \Phi_1^{m_1 - 2} (\Phi_1 - \Phi_0^{m_0})                  & \cdots & \Phi_1^{m_1 - 1} \Phi_2^{m_2 - 1} \Phi_2^{m_2 - 2} (\Phi_2 - \Phi_1^{m_1 - 1} \Phi_0^{m_0})                 & \cdots &        & \Phi_1^{m_1 - 1} \Phi_2^{m_2 - 2} (\Phi_2 - \Phi_1^{m_1 - 1} \Phi_0^{m_0}) \\
        \vdots                                                                                                       & \vdots                                                                                                       &        & \vdots                                                                                                      &        &        & \vdots \\
        \vdots                                                                                                       & \vdots                                                                                                       &        & \vdots                                                                                                      &        &        & \vdots \\
        \Phi_1^{m_1 - 1} \Phi_3^{N_3 - 3} (\Phi_3 - \Phi_2^{m_2 - 1} \Phi_1^{m_1 - 1} \Phi_0^{m_0})                  & \Phi_1^{m_1 - 1} \Phi_3^{N_3 - 3} \Phi_2^{m_2 - 1} \Phi_1^{m_1 - 2} (\Phi_1 - \Phi_0^{m_0})                  & \cdots & \Phi_1^{m_1 - 1} \Phi_3^{N_3 - 3} \Phi_2^{m_2 - 2} (\Phi_2 - \Phi_1^{m_1 - 1} \Phi_0^{m_0}                  & \cdots &        & \Phi_1^{m_1 - 1} \Phi_3^{N_3 - 4} (\Phi_3 - \Phi_2^{m_2 - 1} \Phi_1^{m_1 - 1} \Phi_0^{m_0}) \\
        \Phi_2 \Phi_3^{N_3 - 3} (\Phi_3 - \Phi_2^{m_2 - 1} \Phi_1^{m_1 - 1} \Phi_0^{m_0})                            & \Phi_2 \Phi_3^{N_3 - 3} \Phi_2^{m_2 - 1} \Phi_1^{m_1 - 2} (\Phi_1 - \Phi_0^{m_0})                            & \cdots & \Phi_2 \Phi_3^{N_3 - 3} \Phi_2^{m_2 - 2} (\Phi_2 - \Phi_1^{m_1 - 1} \Phi_0^{m_0}                            & \cdots &        & \Phi_2 \Phi_3^{N_3 - 4} (\Phi_3 - \Phi_2^{m_2 - 1} \Phi_1^{m_1 - 1} \Phi_0^{m_0}) \\
        \vdots                                                                                                       & \vdots                                                                                                       &        & \vdots                                                                                                      &        &        & \vdots \\
        \Phi_1^{m_1 - 1} \Phi_2 \Phi_3^{N_3 - 3} (\Phi_3 - \Phi_2^{m_2 - 1} \Phi_1^{m_1 - 1} \Phi_0^{m_0})           & \Phi_1^{m_1 - 1} \Phi_2 \Phi_3^{N_3 - 3} \Phi_2^{m_2 - 1} \Phi_1^{m_1 - 2} (\Phi_1 - \Phi_0^{m_0})           & \cdots & \Phi_1^{m_1 - 1} \Phi_2 \Phi_3^{N_3 - 3} \Phi_2^{m_2 - 2} (\Phi_2 - \Phi_1^{m_1 - 1} \Phi_0^{m_0}           & \cdots &        & \Phi_1^{m_1 - 1} \Phi_2 \Phi_3^{N_3 - 4} (\Phi_3 - \Phi_2^{m_2 - 1} \Phi_1^{m_1 - 1} \Phi_0^{m_0}) \\
        \vdots                                                                                                       & \vdots                                                                                                       &        & \vdots                                                                                                      &        &        & \vdots \\
        \vdots                                                                                                       & \vdots                                                                                                       &        & \vdots                                                                                                      &        &        & \vdots \\
        \Phi_2^{m_2 - 1} \Phi_3^{N_3 - 3} (\Phi_3 - \Phi_2^{m_2 - 1} \Phi_1^{m_1 - 1} \Phi_0^{m_0})                  & \Phi_2^{m_2 - 1} \Phi_3^{N_3 - 3} \Phi_2^{m_2 - 1} \Phi_1^{m_1 - 2} (\Phi_1 - \Phi_0^{m_0})                  & \cdots & \Phi_2^{m_2 - 1} \Phi_3^{N_3 - 3} \Phi_2^{m_2 - 2} (\Phi_2 - \Phi_1^{m_1 - 1} \Phi_0^{m_0}                  & \cdots &        & \Phi_2^{m_2 - 1} \Phi_3^{N_3 - 4} (\Phi_3 - \Phi_2^{m_2 - 1} \Phi_1^{m_1 - 1} \Phi_0^{m_0}) \\
        \vdots                                                                                                       & \vdots                                                                                                       &        & \vdots                                                                                                      &        &        & \vdots \\
        \Phi_1^{m_1 - 1} \Phi_2^{m_2 - 1} \Phi_3^{N_3 - 3} (\Phi_3 - \Phi_2^{m_2 - 1} \Phi_1^{m_1 - 1} \Phi_0^{m_0}) & \Phi_1^{m_1 - 1} \Phi_2^{m_2 - 1} \Phi_3^{N_3 - 3} \Phi_2^{m_2 - 1} \Phi_1^{m_1 - 2} (\Phi_1 - \Phi_0^{m_0}) & \cdots & \Phi_1^{m_1 - 1} \Phi_2^{m_2 - 1} \Phi_3^{N_3 - 3} \Phi_2^{m_2 - 2} (\Phi_2 - \Phi_1^{m_1 - 1} \Phi_0^{m_0} & \cdots &        & \Phi_1^{m_1 - 1} \Phi_2^{m_2 - 1} \Phi_3^{N_3 - 4} (\Phi_3 - \Phi_2^{m_2 - 1} \Phi_1^{m_1 - 1} \Phi_0^{m_0}) & \cdots & \cdots & 0 \\
        \Phi_3^{N_3 - 2} (\Phi_3 - \Phi_2^{m_2 - 1} \Phi_1^{m_1 - 1} \Phi_0^{m_0})                                   & \Phi_3^{N_3 - 2} \Phi_2^{m_2 - 1} \Phi_1^{m_1 - 2} (\Phi_1 - \Phi_0^{m_0})                                   & \cdots & \Phi_3^{N_3 - 2} \Phi_2^{m_2 - 2} (\Phi_2 - \Phi_1^{m_1 - 1} \Phi_0^{m_0}                                   & \cdots &        & \Phi_3^{N_3 - 3} (\Phi_3 - \Phi_2^{m_2 - 1} \Phi_1^{m_1 - 1} \Phi_0^{m_0})                                   & \cdots & \cdots & \Phi_1 - \Phi_0^{m_0} & 0\\
    \end{bmatrix}
    $} \numberthis
    \label{E-F-4-level-evaluated-eq}
\end{alignat*}
}
\end{sideways}
\end{center}

This yields the following result.

\begin{theorem}
    Let $\{ \Phi_\ell \}$ be simultaneously diagonalizable by the same unitary transformation $X$,
    with eigenvalues $\{ \lambda_{\ell,k} \}$, $| \lambda_{\ell,k} | < 1$.
    Then, the \emph{worst case} convergence factor of four-level MGRIT with F-relaxation
    is bounded by,
    \begin{alignat}{4}
        c_f &\leq \sqrt{\| \mathcal{E}_{F}^{n_\ell = 4, \Delta} \|_1 \| \mathcal{E}_{F}^{n_\ell = 4, \Delta} \|_\infty },
        \label{E-F-nl4-inequality-eqn}
    \end{alignat}
    and $\| \mathcal{E}_{F}^{n_\ell = 4, \Delta} \|_1$ and $\| \mathcal{E}_{F}^{n_\ell = 4, \Delta} \|_\infty$
    are given analytically as,
    \begin{alignat*}{4}
        \| \mathcal{E}_{F}^{n_\ell = 4, \Delta} \|_1
        &= \max_{\substack{1 \leq k \leq N_x\\0 \leq d \leq m_1 m_2 - 1}} s_d^{\text{col}} (k), &&\qquad
        \| \mathcal{E}_{F}^{n_\ell = 4, \Delta} \|_\infty
        &= \max_{\substack{1 \leq k \leq N_x\\0 \leq d \leq m_1 m_2 - 1}} s_d^{\text{row}} (k),
    \end{alignat*}
    where the column and row sums, $s_d^{\text{col}}$ and $s_d^{\text{row}}$ (row and column subscripts $d$),
    are defined as follows.
    The absolute column sums of the first CF-interval on level $1$ are given as,
    \begin{alignat*}{4}
        & s_0^{\text{col}} (k)
        = | \lambda_{3,k} |^{N_3 - 2}
        | \lambda_{3,k} - \lambda_{0,k}^{m_0} \lambda_{1,k}^{m_1 - 1} \lambda_{2,k}^{m_2 - 1} | \\
        &\quad + | \lambda_{3,k} - \lambda_{0,k}^{m_0} \lambda_{1,k}^{m_1 - 1} \lambda_{2,k}^{m_2 - 1} |
        \frac{1 - | \lambda_{3,k} |^{N_3 - 2}}{1 - | \lambda_{3,k} |}
        \frac{1 - | \lambda_{2,k} |^{m_2}}{1 - | \lambda_{2,k} |}
        \frac{1 - | \lambda_{1,k} |^{m_1}}{1 - | \lambda_{1,k} |}    \\
        &\quad + | \lambda_{2,k} - \lambda_{0,k}^{m_0} \lambda_{1,k}^{m_1 - 1} |
        \frac{1 - | \lambda_{2,k} |^{m_2 - 1}}{1 - | \lambda_{2,k} |}
        \frac{1 - | \lambda_{1,k} |^{m_1}}{1 - | \lambda_{1,k} |}
        + | \lambda_{1,k} - \lambda_{0,k}^{m_0} |
        \frac{1 - | \lambda_{1,k} |^{m_1 - 1}}{1 - | \lambda_{1,k} |},
    \end{alignat*}
    corresponding to the first C-point on level $1$. Next,
    \begin{alignat*}{4}
        & s_{m_1 (m_2 - j)}^{\text{col}} (k)
        =
\resizebox{0.8\hsize}{!}{$
        | \lambda_{2,k} - \lambda_{0,k}^{m_0} \lambda_{1,k}^{m_1 - 1} |
        \left(
        \sum_{p = 0}^{j - 2} | \lambda_{2,k} |^p
        \right)
        \frac{1 - | \lambda_{1,k} |^{m_1}}{1 - | \lambda_{1,k} |}
        + | \lambda_{1,k} - \lambda_{0,k}^{m_0} |
        \frac{1 - | \lambda_{1,k} |^{m_1 - 1}}{1 - | \lambda_{1,k} |}
        $} \\
        &\quad
\resizebox{0.95\hsize}{!}{$
        + | \lambda_{2,k} |^{j - 1}
        | \lambda_{2,k} - \lambda_{0,k}^{m_0}  \lambda_{1,k}^{m_1 - 1} |
        \left[
        | \lambda_{3,k} |^{N_3 - 2}
        + \frac{1 - | \lambda_{3,k} |^{N_3 - 2}}{1 - | \lambda_{3,k} |}
        \frac{1 - | \lambda_{2,k} |^{m_2}}{1 - | \lambda_{2,k} |}
        \frac{1 - | \lambda_{1,k} |^{m_1 - 1}}{1 - | \lambda_{1,k} |}
        \right]$},
    \end{alignat*}
    for $j = 1, \ldots, m_2 - 1$, corresponding to the interior level $2$ C-points of the first CF-interval
    on level $1$. Lastly,
    \begin{alignat*}{4}
        &s_{m_1 (m_2 - j) - r - 1}^{\text{col}} (k)
        =
        | \lambda_{1,k} - \lambda_{0,k}^{m_0} |
        \left[
        \left(
        \sum_{q = 0}^{m_1 - 1} | \lambda_{1,k} |^q
        \right)
        \left(
        \sum_{p = 0}^{j - 1} | \lambda_{2,k} |^p
        \right)
        + \left(
        \sum_{q = 0}^{r - 1} | \lambda_{1,k} |^q
        \right)
        \right] \\
        &\quad
\resizebox{0.95\hsize}{!}{$
        + | \lambda_{2,k} |^j
        | \lambda_{1,k} |^r
        | \lambda_{1,k} - \lambda_{0,k}^{m_0} |
        \left[
        | \lambda_{3,k} |^{N_3 - 2}
        +
        \frac{1 - | \lambda_{3,k} |^{N_3 - 2}}{1 - | \lambda_{3,k} |}
        \frac{1 - | \lambda_{2,k} |^{m_2}}{1 - | \lambda_{2,k} |}
        \frac{1 - | \lambda_{1,k} |^{m_1}}{1 - | \lambda_{1,k} |}
        \right]$},
    \end{alignat*}
    for $j = 0, \ldots, m_2 - 1$ and $r = 0, \ldots, m_1 - 2$, corresponding to the level $2$ F-points
    of the first CF-interval on level $1$.

    The absolute row sums of the last FC-interval on level $1$ are given as,
    \begin{alignat*}{4}
        s_{N_1 - 1}^{\text{row}} (k)
        =~&
        \frac{1 - | \lambda_{3,k} |^{N_3 - 1}}{1 - | \lambda_{3,k} |}
        \bigg[
        | \lambda_{1,k} - \lambda_{0,k}^{m_0} |
        \frac{1 - | \lambda_{2,k} |^{m_2}}{1 - | \lambda_{2,k} |}
        \frac{1 - | \lambda_{1,k} |^{m_1 - 1}}{1 - | \lambda_{1,k} |} \\
        &\quad + | \lambda_{2,k} - \lambda_{0,k}^{m_0}  \lambda_{1,k}^{m_1 - 1} |
        \frac{1 - | \lambda_{2,k} |^{m_2 - 1}}{1 - | \lambda_{2,k} |}
        + | \lambda_{3,k} - \lambda_{0,k}^{m_0} \lambda_{1,k}^{m_1 - 1} \lambda_{2,k}^{m_2 - 1} |
        \bigg],
    \end{alignat*}
    corresponding to the last C-point on level $1$. Next,
    \begin{alignat*}{4}
        &s_{N_1 - 1 - m_1 m_2 + j m_1}^{\text{row}} (k)
        = \frac{1 - | \lambda_{2,k} |^j}{1 - | \lambda_{2,k} |}
        \left[
        | \lambda_{1,k} - \lambda_{0,k}^{m_0} |
        \frac{1 - | \lambda_{1,k} |^{m_1 - 1}}{1 - | \lambda_{1,k} |}
        + | \lambda_{2,k} - \lambda_{0,k}^{m_0}  \lambda_{1,k}^{m_1 - 1} |
        \right]    \\
        &\quad + | \lambda_{2,k} |^j
        \frac{1 - | \lambda_{3,k} |^{N_3 - 2}}{1 - | \lambda_{3,k} |}
        \bigg[
        | \lambda_{1,k} - \lambda_{0,k}^{m_0} |
        \frac{1 - | \lambda_{2,k} |^{m_2}}{1 - | \lambda_{2,k} |}
        \frac{1 - | \lambda_{1,k} |^{m_1 - 1}}{1 - | \lambda_{1,k} |} \\
        &\quad + | \lambda_{2,k} - \lambda_{0,k}^{m_0}  \lambda_{1,k}^{m_1 - 1} |
        \frac{1 - | \lambda_{2,k} |^{m_2 - 1}}{1 - | \lambda_{2,k} |}
        + | \lambda_{3,k} - \lambda_{0,k}^{m_0} \lambda_{1,k}^{m_1 - 1} \lambda_{2,k}^{m_2 - 1} |
        \bigg],
    \end{alignat*}
    for $j = 1, \ldots, m_2 - 1$, corresponding to the interior C-points of the last FC-interval on level $1$. Lastly,
    \begin{alignat*}{4}
        &s_{N_1 - 1 - m_1 m_2 + r + j m_1}^{\text{row}} (k) = \\
        &\quad
\resizebox{0.95\hsize}{!}{$
        | \lambda_{1,k} |^r
        | \lambda_{2,k} |^j
        \frac{1 - | \lambda_{3,k} |^{N_3 - 2}}{1 - | \lambda_{3,k} |}
        \left[
        | \lambda_{2,k} - \lambda_{0,k}^{m_0}  \lambda_{1,k}^{m_1 - 1} |
        \frac{1 - | \lambda_{2,k} |^{m_2 - 1}}{1 - | \lambda_{2,k} |}
        + | \lambda_{3,k} - \lambda_{0,k}^{m_0} \lambda_{1,k}^{m_1 - 1} \lambda_{2,k}^{m_2 - 1} |
        \right] $}   \\
        &\quad
\resizebox{0.95\hsize}{!}{$ + | \lambda_{1,k} |^r
        \left( \sum_{q = 0}^{j - 1} | \lambda_{2,k} |^q \right)
        \left[
        | \lambda_{2,k} - \lambda_{0,k}^{m_0}  \lambda_{1,k}^{m_1 - 1} |
        + | \lambda_{1,k} - \lambda_{0,k}^{m_0} |
        \frac{1 - | \lambda_{1,k} |^{m_1 - 1}}{1 - | \lambda_{1,k} |}
        \right] $}   \\
        &\quad
\resizebox{0.95\hsize}{!}{$ + | \lambda_{1,k} |^r
        | \lambda_{1,k} - \lambda_{0,k}^{m_0} |
        \frac{1 - | \lambda_{3,k} |^{N_3 - 2}}{1 - | \lambda_{3,k} |}
        \frac{1 - | \lambda_{2,k} |^{m_2}}{1 - | \lambda_{2,k} |}
        \frac{1 - | \lambda_{1,k} |^{m_1 - 1}}{1 - | \lambda_{1,k} |}
        + | \lambda_{1,k} - \lambda_{0,k}^{m_0} |
        \frac{1 - | \lambda_{1,k} |^r}{1 - | \lambda_{1,k} |}$},
    \end{alignat*}
    for $j = 0, \ldots, m_2 - 1$ and $r = 1, \ldots, m_1 - 1$, corresponding to the F-points
    of the last FC-interval on level $1$.
    \label{cf-4-level-F-theo}
\end{theorem}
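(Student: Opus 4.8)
The plan is to mirror the arguments of Theorem \ref{cf-2-level-rFCF-theo} and Theorem \ref{cf-3-level-F-theo}, now starting from the explicit four-level propagator \eqref{E-F-4-level-evaluated-eq}. First I would dispatch the convergence-factor bound \eqref{E-F-nl4-inequality-eqn} in one line: Equation \eqref{error-l2-upper-bound-eqn} gives $c_f \leq \| \mathcal{E}_{F}^{n_\ell = 4, \Delta} \|_2$, and inequality \eqref{inequality-eq} gives $\| \mathcal{E}_{F}^{n_\ell = 4, \Delta} \|_2 \leq \sqrt{\| \mathcal{E}_{F}^{n_\ell = 4, \Delta} \|_1 \, \| \mathcal{E}_{F}^{n_\ell = 4, \Delta} \|_\infty}$. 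The substantive task is then to evaluate the $1$- and $\infty$-norms of the matrix displayed in \eqref{E-F-4-level-evaluated-eq}.

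Next I would invoke the diagonalization convention of Section \ref{bounds-MGRIT-error-propagation-sec} together with the generalization of Lemma \ref{individual-modes-lem}: since each $\Phi_\ell$ is taken in diagonal form, a permutation of rows and columns turns $\mathcal{E}_{F}^{n_\ell = 4, \Delta}$ into a block-diagonal matrix whose $N_x$ diagonal blocks $\tilde{\mathcal{E}}_{F}^{n_\ell = 4, \Delta}(k)$ have scalar entries obtained by substituting $\Phi_\ell \mapsto \lambda_{\ell,k}$ in \eqref{E-F-4-level-evaluated-eq}. Because permutations preserve the maximal absolute column and row sums and these norms of a block-diagonal matrix are the maxima of the block norms, we get $\| \mathcal{E}_{F}^{n_\ell = 4, \Delta} \|_1 = \max_k \| \tilde{\mathcal{E}}_{F}^{n_\ell = 4, \Delta}(k) \|_1$ and likewise for $\| \cdot \|_\infty$. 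The remaining work is purely scalar.

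Then I would compute the absolute column sums of $\tilde{\mathcal{E}}_{F}^{n_\ell = 4, \Delta}(k)$ directly from \eqref{E-F-4-level-evaluated-eq}. Reading off any column, its nonzero entries are products of nonnegative powers of $|\lambda_{1,k}|, |\lambda_{2,k}|, |\lambda_{3,k}|$ with one of the three \emph{defect} factors $|\lambda_{1,k} - \lambda_{0,k}^{m_0}|$, $|\lambda_{2,k} - \lambda_{0,k}^{m_0}\lambda_{1,k}^{m_1-1}|$, or $|\lambda_{3,k} - \lambda_{0,k}^{m_0}\lambda_{1,k}^{m_1-1}\lambda_{2,k}^{m_2-1}|$. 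Grouping entries by defect factor and collapsing the resulting geometric progressions with $\sum_{i=0}^{N-1} a^i = (1-a^N)/(1-a)$ produces the stated closed forms. The column index $d$ must be translated into a hierarchical position (a level-$1$ C-point, an interior level-$2$ C-point, or a level-$2$ F-point of the first CF-interval on level $1$), yielding the three cases $s_0^{\text{col}}$, $s_{m_1 (m_2 - j)}^{\text{col}}$, and $s_{m_1 (m_2 - j) - r - 1}^{\text{col}}$; the row sums $s_{\cdot}^{\text{row}}$ of the last FC-interval follow by the symmetric computation. A short monotonicity argument—each later CF-interval (respectively earlier FC-interval) carries an extra factor $|\lambda_{\ell,k}| < 1$—shows the column (respectively row) maximum is attained inside the first CF-interval (respectively last FC-interval), so that letting $d$ range over $0, \ldots, m_1 m_2 - 1$ suffices.

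The \textbf{main obstacle} will be the bookkeeping in this last step rather than any single estimate. Concretely, the four-level hierarchy nests three F/C-splittings, so I must (i) enumerate, for each structurally distinct column and row, exactly which block entries of \eqref{E-F-4-level-evaluated-eq} are nonzero and with which powers; (ii) verify—as noted below \eqref{PPinvARR-plus-invAff-plus-PinvAffR-eq}—that the contributing sparsity patterns do not overlap, so that the absolute column and row sums split cleanly into the three defect-factor groups; and (iii) make the dominance claim rigorous, namely that no interior CF-interval can exceed the first. Once the index map $d \leftrightarrow$ (hierarchical position) is pinned down, each case is a finite geometric sum and matching it to the displayed formulae is mechanical.
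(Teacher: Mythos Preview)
Your proposal is correct and follows exactly the approach the paper intends: the paper's proof is the single line ``The proof is analogous to Theorem \ref{cf-3-level-F-theo},'' and your outline simply unpacks that analogy---apply \eqref{error-l2-upper-bound-eqn} and \eqref{inequality-eq}, diagonalize via Lemma \ref{individual-modes-lem}, and read off the column and row sums from the explicit matrix \eqref{E-F-4-level-evaluated-eq}, collapsing geometric series and using $|\lambda_{\ell,k}|<1$ to localize the maxima to the first CF-interval (columns) and last FC-interval (rows).
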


\begin{proof}
    The proof is analogous to Theorem \ref{cf-3-level-F-theo}.
\end{proof}

\begin{remark}
    We note, that evaluating the $2 m_1 m_2$ analytic formulae in Theorem \ref{cf-4-level-F-theo} significantly reduces
    the time complexity of evaluating Equation \eqref{E-F-nl4-inequality-eqn}
    compared to constructing $\mathcal{E}_F^{n_\ell = 4, \Delta}$ numerically and computing
    $\| \mathcal{E}_F^{n_\ell = 4, \Delta} \|_1$ and $\| \mathcal{E}_F^{n_\ell = 4, \Delta} \|_\infty$.
\end{remark}

\subsubsection{Three-level V-cycles with FCF-relaxation}
\label{three-level-V-cycles-with-FCF-relaxation-appsec}

Following the same approach as in the previous sections, we can find the following result
for three-level V-cycles with FCF-relaxation.
\begin{theorem}
    Let $\{ \Phi_\ell \}$ be simultaneously diagonalizable by the same unitary transformation $X$,
    with eigenvalues $\{ \lambda_{\ell,k} \}$, $| \lambda_{\ell,k} | < 1$.
    Then, the \emph{worst case} convergence factor of three-level MGRIT with FCF-relaxation
    is bounded by,
    \begin{alignat}{4}
        c_f &\leq \sqrt{\| \mathcal{E}_{FCF}^{n_\ell = 3, \Delta} \|_1 \| \mathcal{E}_{FCF}^{n_\ell = 3, \Delta} \|_\infty },
        \label{E-FCF-nl3-inequality-eqn}
    \end{alignat}
    and $\| \mathcal{E}_{F}^{n_\ell = 3, \Delta} \|_1$ and $\| \mathcal{E}_{F}^{n_\ell = 3, \Delta} \|_\infty$
    are given analytically as,
    \begin{alignat*}{4}
        \| \mathcal{E}_{FCF}^{n_\ell = 3, \Delta} \|_1
        &= \max_{\substack{1 \leq k \leq N_x\\0 \leq d \leq m_1 m_2 - 1}} s_d^{\text{col}} (k), &&\qquad
        \| \mathcal{E}_{FCF}^{n_\ell = 3, \Delta} \|_\infty
        &= \max_{\substack{1 \leq k \leq N_x\\0 \leq d \leq m_1 m_2 - 1}} s_d^{\text{row}} (k).
    \end{alignat*}
    The absolute column sums of the first CF-interval on level $1$ are given as,
    \begin{equation*}
\resizebox{0.975\hsize}{!}{$
        s_0^{\text{col}} (k)
        = | \lambda_{0,k} |^{m_0}
        | \lambda_{1,k} - \lambda_{0,k}^{m_0} |
        \left[
        | \lambda_{1,k} |^{m_1 - 1}
        \left(
        | \lambda_{2,k} |^{N_2 - 3}
        + \frac{1 - | \lambda_{2,k} |^{N_2 - 3}}{1 - | \lambda_{2,k} |}
        \frac{1 - | \lambda_{1,k} |^{m_1}}{1 - | \lambda_{1,k} |}
        \right)
        + \frac{1 - | \lambda_{1,k} |^{m_1 - 1}}{1 - | \lambda_{1,k} |}
        \right]$},
    \end{equation*}
    corresponding to the last F-point on level $1$. Next,
    \begin{alignat*}{4}
        &s_{m_1 - 2}^{\text{col}} (k)
        = | \lambda_{0,k} |^{m_0}
        \bigg[
        | \lambda_{1,k} - \lambda_{0,k}^{m_0} |
        \frac{1 - | \lambda_{1,k} |^{m_1}}{1 - | \lambda_{1,k} |} \\
        &\quad + | \lambda_{1,k} |
        | \lambda_{2,k} - \lambda_{0,k}^{m_0}  \lambda_{1,k}^{m_1 - 1} |
        \left(
        | \lambda_{2,k} |^{N_2 - 3}
        + \frac{1 - | \lambda_{2,k} |^{N_2 - 3}}{1 - | \lambda_{2,k} |}
        \frac{1 - | \lambda_{1,k} |^{m_1}}{1 - | \lambda_{1,k} |}
        \right)
        \bigg],
    \end{alignat*}
    corresponding to the first C-point on level $1$ if $m_1 = 2$,
    or the penultimate F-point on level $1$ if $m_1 > 2$. Lastly, if $m_1 > 2$,
    \begin{equation*}
\resizebox{0.975\hsize}{!}{$
        s_{m_1 - 2 - j}^{\text{col}} (k)
        = | \lambda_{0,k} |^{m_0}
        | \lambda_{1,k} |^j
        | \lambda_{1,k} - \lambda_{0,k}^{m_0} |
        \left[
        | \lambda_{2,k} |^{N_2 - 2}
        + \frac{1 - | \lambda_{1,k} |^j}{1 - | \lambda_{1,k} |}
        + \frac{1 - | \lambda_{2,k} |^{N_2 - 2}}{1 - | \lambda_{2,k} |}
        \frac{1 - | \lambda_{1,k} |^{m_1}}{1 - | \lambda_{1,k} |}
        \right]$},
    \end{equation*}
    for $j = 1, \ldots, m_1 - 2$,
    corresponding to the first C-point and the following F-points on level $1$.

    The absolute row sums of the last FC-interval on level $1$ are given as,
    \begin{alignat*}{4}
        &s_{N_1 - 1}^{\text{row}} (k)
        = | \lambda_{0,k} |^{m_0}
        | \lambda_{1,k} |
        | \lambda_{2,k} - \lambda_{0,k}^{m_0}  \lambda_{1,k}^{m_1 - 1} |
        \frac{1 - | \lambda_{2,k} |^{N_2 - 2}}{1 - | \lambda_{2,k} |}    \\
        &\quad
\resizebox{0.95\hsize}{!}{$
        + | \lambda_{0,k} |^{m_0}
        \left[
        | \lambda_{1,k} - \lambda_{0,k}^{m_0} |
        \left(
        1
        + | \lambda_{1,k} |
        \frac{1 - | \lambda_{2,k} |^{N_2 - 2}}{1 - | \lambda_{2,k} |}
        \frac{1 - | \lambda_{1,k} |^{m_1 - 1}}{1 - | \lambda_{1,k} |}
        + | \lambda_{1,k} |
        | \lambda_{2,k} |^{N_2 - 2}
        \left( \sum_{q = 0}^{m_1 - 3} | \lambda_{1,k} |^q \right)
        \right)
        \right]$},
    \end{alignat*}
    corresponding to the last C-point on level $1$, and,
    \begin{alignat*}{4}
        &s_{N_1 - m_1 + j}^{\text{row}} (k)
        = | \lambda_{0,k} |^{m_0}
        \bigg[
        | \lambda_{1,k} - \lambda_{0,k}^{m_0} |
        \frac{1 - | \lambda_{1,k} |^{j + 2}}{1 - | \lambda_{1,k} |} \\
        &\quad + | \lambda_{1,k} |^{j + 2}
        \frac{1 - | \lambda_{2,k} |^{N_2 - 3}}{1 - | \lambda_{2,k} |}
        \left(
        | \lambda_{2,k} - \lambda_{0,k}^{m_0}  \lambda_{1,k}^{m_1 - 1} |
        + | \lambda_{1,k} - \lambda_{0,k}^{m_0} |
        \frac{1 - | \lambda_{1,k} |^{m_1 - 1}}{1 - | \lambda_{1,k} |}
        \right)
        \bigg],
    \end{alignat*}
    for $j = 0, \ldots, m_1 - 2$,corresponding to the preceding F-points on level $1$.
    \label{cf-3-level-FCF-theo}
\end{theorem}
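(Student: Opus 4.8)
The plan is to mirror the proofs of Theorem \ref{cf-2-level-rFCF-theo} and Theorem \ref{cf-3-level-F-theo}, specializing the multilevel FCF error propagator of Lemma \ref{E-FCF-nl-theo} to three levels. First I would set $n_\ell = 3$ in Equation \eqref{E-FCF-nl-eqn} and form the coarse-grid propagator $\mathcal{E}_{FCF}^{n_\ell = 3,\Delta} = R_{I_0}\mathcal{E}_{FCF}^{n_\ell = 3}P_0$, pulling $R_{I_0}$ and $P_0$ through the expression as in Section \ref{residual-and-error-on-level-0-1-sec} and using $R_{I_0}P_0 = I$ to collapse the identity terms. The nontrivial blocks to evaluate are the coarse solve $P_1 A_2^{-1} R_1\,[I - A_1 T_1(T_1^T A_1 T_1)^{-1}T_1^T]$ together with the relaxation contributions from $T_0$, $S_1$, and $T_1$. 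Using the explicit block forms of $P_\ell$, $R_\ell$, $S_\ell$ from \eqref{Rl-eqn}--\eqref{Pl-eqn}, of $T_\ell = R_{I_\ell}^T$ from \eqref{RIl-eqn}, and the closed form of $A_\ell^{-1}$ from \eqref{invAl-suppeqn}, each product can be multiplied out explicitly, exactly analogous to the supplementary computations \eqref{invAff-eq}--\eqref{E-F-3-level-evaluated-eq} carried out for the F-relaxation case.

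Next I would invoke simultaneous diagonalization (Section \ref{sec:MGRIT:diag}): since every entry of $\mathcal{E}_{FCF}^{n_\ell = 3,\Delta}$ is a polynomial in $\{\Phi_\ell\}$, the operator block-diagonalizes under $\widetilde U$, and by Lemma \ref{individual-modes-lem} (which the accompanying remark notes applies to all numbers of levels) its $\ell^2$-norm equals the supremum over spatial modes $k$ of the norm of the scalar matrix obtained by replacing each $\Phi_\ell$ with $\lambda_{\ell,k}$. Applying the inequality \eqref{inequality-eq}, namely $\|\mathcal{E}_{FCF}^{n_\ell = 3,\Delta}\|_2 \le \sqrt{\|\mathcal{E}_{FCF}^{n_\ell = 3,\Delta}\|_1\,\|\mathcal{E}_{FCF}^{n_\ell = 3,\Delta}\|_\infty}$, then yields the bound \eqref{E-FCF-nl3-inequality-eqn}, reducing the theorem to an exact evaluation of the two norms.

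The heart of the proof, and the main obstacle, is computing $\|\cdot\|_1$ and $\|\cdot\|_\infty$ in closed form. As explained before Theorem \ref{cf-3-level-F-theo}, the recursive F/C partitioning destroys the simple ``first column / last row'' structure of the two-level case: the maximum absolute column sum is attained somewhere within the first CF-interval on level $1$, and the maximum absolute row sum within the last FC-interval, so the maximum must be taken over the $m_1$ candidate indices $d = 0,\dots,m_1-1$ in each. I would organize the nonzero entries by their position within these intervals, read them off the evaluated propagator, and sum the resulting geometric series with $\sum_{i=0}^{N-1}a^i = (1-a^N)/(1-a)$. The bookkeeping is heavier than for F-relaxation because the FCF terms contribute the additional C-relaxation factor $|\lambda_{0,k}|^{m_0}$ and cross terms involving $(T_\ell^T A_\ell T_\ell)^{-1}$; the delicate part is matching the three reported column formulae $s_0^{\text{col}}$, $s_{m_1-2}^{\text{col}}$, $s_{m_1-2-j}^{\text{col}}$ (and their row analogues) to the correct interval positions and correctly handling the special case $m_1 = 2$ versus $m_1 > 2$. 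Finally I would verify that the stated index formulae enumerate precisely the C- and F-points of the relevant intervals, completing the identification of both norms and hence the bound.
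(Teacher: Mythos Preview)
Your proposal is correct and follows essentially the same approach as the paper: the paper's own proof simply states that it is ``analogous to Theorem \ref{cf-3-level-F-theo}'' (which in turn is analogous to Theorem \ref{cf-2-level-rFCF-theo}), and you have accurately unpacked what that analogy entails---specializing the FCF propagator of Lemma \ref{E-FCF-nl-theo} to $n_\ell=3$, passing to $\mathcal{E}_{FCF}^{n_\ell=3,\Delta}$, diagonalizing mode-by-mode, applying inequality \eqref{inequality-eq}, and then reading off the column/row sums of the first CF- and last FC-interval as geometric series.
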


\begin{proof}
    The proof is analogous to Theorem \ref{cf-3-level-F-theo}.
\end{proof}

\section{Butcher tableaux of SDIRK schemes}\label{butcher-tableaux-appsec}
~
\FloatBarrier
\begin{table}[h]
    \begin{center}
        \begin{tabular}{ c | c }
            $1$ & $1$ \\ \hline
                & $1$
        \end{tabular} \qquad
        \begin{tabular}{ c | c  c }
            $1-\gamma$ & $1-\gamma$  & $0$ \\
            $\gamma$   & $2\gamma-1$ & $1-\gamma$ \\ \hline
                       & $1/2$       & $1/2$
        \end{tabular} \qquad
        \begin{tabular}{ c | c  c  c }
            $q$ & $q$   & $0$     & $0$ \\
            $s$ & $s-q$ & $q$     & $0$ \\
            $1$ & $r$   & $1-q-r$ & $q$ \\ \hline
                & $r$   & $1-q-r$ & $q$
        \end{tabular}
        \caption{Butcher tableau for L-stable SDIRK scheme of orders 1 - 3 with
            $\gamma = 1/\sqrt{2}$, $q = 0.4358665215\ldots$, $r = 1.2084966491\ldots$
            and $s = 0.7179332607\ldots$; See \cite{DobrevKolevPeterssonSchroder2017}.}
    \end{center}
    \label{butcher-tableau-L-SDIRK123-tab}
\end{table}

\begin{table}[h]
    \begin{center}
        \begin{tabular}{ c | c  c  c  c  c }
        $1/4$   & $1/4$      & $0$         & $0$      & $0$      & $0$ \\
        $3/4$   & $1/2$      & $1/4$       & $0$      & $0$      & $0$ \\
        $11/20$ & $17/50$    & $-1/25$     & $1/4$    & $0$      & $0$ \\
        $1/2$   & $371/1360$ & $-137/2720$ & $15/544$ & $1/4$    & $0$ \\
        $1$     & $25/24$    & $-49/48$    & $125/16$ & $-85/12$ & $1/4$ \\ \hline
                & $25/24$    & $-49/48$    & $125/16$ & $-85/12$ & $1/4$
        \end{tabular}
        \caption{Butcher tableau for L-stable SDIRK scheme of orders 4; See \cite{DuarteDobbinsSmooke2016}, Appendix C.}
    \end{center}
    \label{butcher-tableau-L-SDIRK4-tab}
\end{table}

\begin{table}[h]
    \begin{center}
        \begin{tabular}{ c | c  c }
            $1/4$ & $1/4$ & $0$   \\
            $3/4$ & $1/2$ & $1/4$ \\ \hline
                  & $1/2$ & $1/2$
        \end{tabular} \qquad
        \begin{tabular}{ c | c  c  c }
            $\gamma$   & $\gamma$    & $0$ \\
            $1-\gamma$ & $1-2\gamma$ & $\gamma$ \\ \hline
                       & $1/2$       & $1/2$
        \end{tabular} \qquad
        \begin{tabular}{ c | c  c  c }
            $q$   & $q$     & $0$    & $0$ \\
            $1/2$ & $1/2-q$ & $q$    & $0$ \\
            $1-q$ & $2q$    & $1-4q$ & $q$ \\ \hline
                  & $r$     & $1-2r$ & $r$
        \end{tabular}
        \caption{Butcher tableau for A-stable SDIRK scheme of orders 2 - 4 with
            $\gamma = (3 + \sqrt{3}) / 6$, $q =  \cos{(\pi / 18)} / \sqrt{3} + 1/2$
            and $r = 1 / (6 (2 q - 1)^2)$;
            See \cite{BonaventuraDellarocca2015}.}
    \end{center}
    \label{butcher-tableau-A-SDIRK234-tab}
\end{table}
\FloatBarrier

\section{Numerical results}

\subsection{Initial condition}
\label{numerical-results-ic-suppsec}
~
\FloatBarrier
\begin{figure}[h!]
    \centering
    \includegraphics[width=0.45\textwidth]{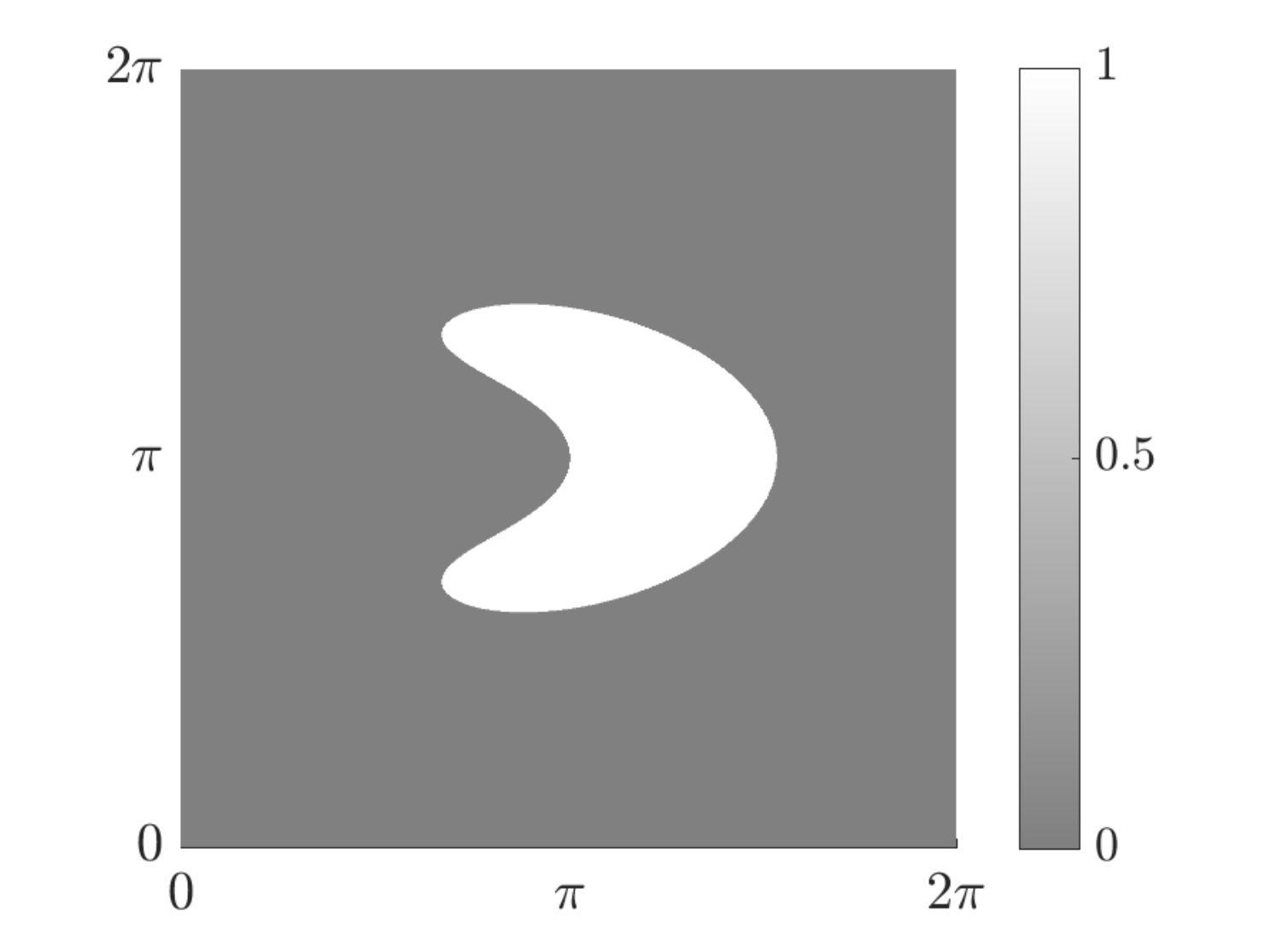}
    \includegraphics[width=0.45\textwidth]{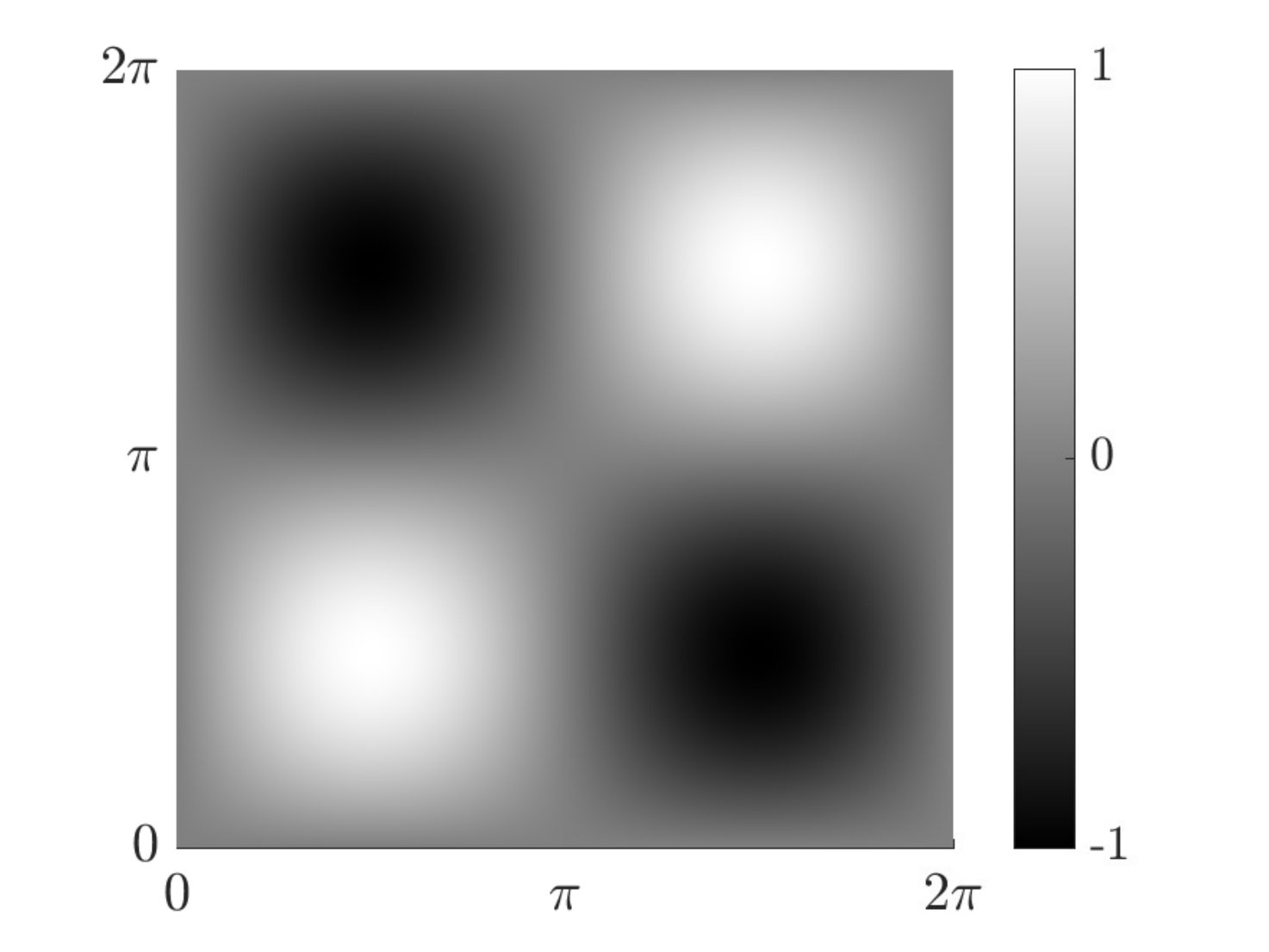}
    \caption{Initial condition for 2D wave equation (left) and 2D diffusion equation (right).}
    \label{diffusion-wave-ic-suppfig}
\end{figure}
\FloatBarrier

\subsection{Isotropic diffusion equation}
\label{isotropic-diffusion-equation-additional-numerical-results-suppsec}
\FloatBarrier
\begin{figure}[h!]
    \centering
    \hspace{-1.75cm}
    \setlength{\figurewidth}{0.3\linewidth}
    \setlength{\figureheight}{0.25\linewidth}
    \begin{subfigure}[b]{0.4\textwidth}
        \raisebox{0.5cm}{\definecolor{mycolor1}{rgb}{0.00000,0.44700,0.74100}%
        \definecolor{mycolor2}{rgb}{0.85000,0.32500,0.09800}%
        \definecolor{mycolor3}{rgb}{0.92900,0.69400,0.12500}%
        \definecolor{mycolor4}{rgb}{0.49400,0.18400,0.55600}%
%

    \end{subfigure}
    \caption{Isotropic diffusion: Comparison of V- and F-cycle MGRIT with F-relaxation.
        Convergence of A-stable schemes deteriorates much quicker with a growing number of time
        grid levels and V-cycle MGRIT than for L-stable schemes and V-cycle MGRIT.
        The convergence factor for L-stable schemes and F-cycle MGRIT is almost constant.}
    \label{diffusion-isotropic-V-F-cycle-nt1024-F-relaxation-nn11x11-suppfig}
\end{figure}
\FloatBarrier
\FloatBarrier
\begin{figure}[h!]
    \centering
    \hspace{-1.75cm}
    \setlength{\figurewidth}{0.3\linewidth}
    \setlength{\figureheight}{0.25\linewidth}
    \begin{subfigure}[b]{0.4\textwidth}
        \raisebox{0.5cm}{
            \definecolor{mycolor1}{rgb}{0.00000,0.44700,0.74100}%
            \definecolor{mycolor2}{rgb}{0.85000,0.32500,0.09800}%
            \definecolor{mycolor3}{rgb}{0.92900,0.69400,0.12500}%
            \definecolor{mycolor4}{rgb}{0.49400,0.18400,0.55600}%
%

    \end{subfigure}
    \caption{Isotropic diffusion: Comparison of V- and F-cycle MGRIT with FCF-relaxation.
        Convergence of A-stable and L-stable schemes deteriorates only slightly for an
        MGRIT V-cycle algorithm. On the other hand, the convergence factor
        for an F-cycle MGRIT algorithm is constant for all considered RK schemes and cases.}
    \label{diffusion-isotropic-V-F-cycle-nt1024-FCF-relaxation-nn11x11-suppfig}
\end{figure}
\FloatBarrier

\subsection{Anisotropic diffusion equation}
\label{anisotropic-diffusion-equation-additional-numerical-results-suppsec}
~
\FloatBarrier
\setlength{\figurewidth}{0.2\linewidth}
\setlength{\figureheight}{0.3\linewidth}
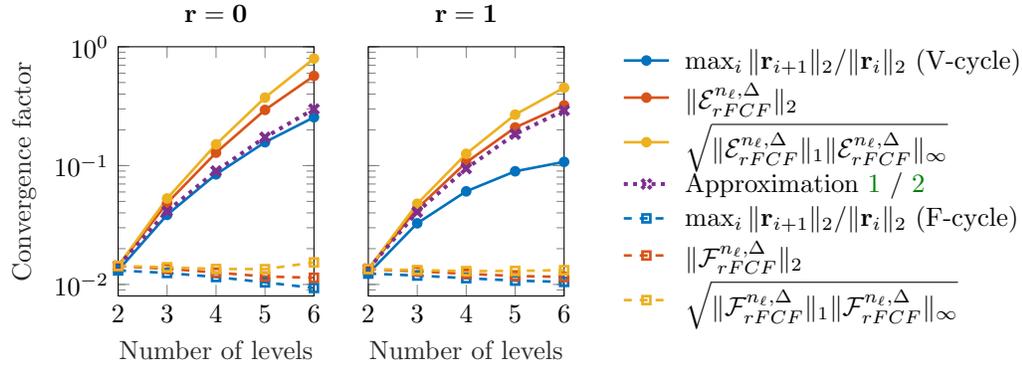
\begin{figure}[h!]
    \centering
   \hspace{-3cm}
    \begin{subfigure}[b]{0.3\textwidth}
        \definecolor{mycolor1}{rgb}{0.00000,0.44700,0.74100}%
        \definecolor{mycolor2}{rgb}{0.85000,0.32500,0.09800}%
        \definecolor{mycolor3}{rgb}{0.92900,0.69400,0.12500}%
        \definecolor{mycolor4}{rgb}{0.49400,0.18400,0.55600}%
        \begin{tikzpicture}

        \begin{axis}[%
        width=\figurewidth,
        height=0.849\figureheight,
        at={(0\figurewidth,0\figureheight)},
        scale only axis,
        unbounded coords=jump,
        xmin=2,
        xmax=6,
        xtick={2, 3, 4, 5, 6},
        xlabel style={font=\color{white!15!black}},
        xlabel={Number of levels},
        ymode=log,
        ymin=0.008,
        ymax=1,
        yminorticks=true,
        ylabel style={font=\color{white!15!black}},
        ylabel={Convergence factor},
        axis background/.style={fill=white},
        title style={font=\bfseries},
        title={$\mathbf{r = 0}$}
        ]
        \addplot [color=mycolor1, line width=1pt, mark size=1.5pt, mark=*, mark options={solid, mycolor1}, forget plot]
          table[row sep=crcr]{%
        2	0.0131195859007948\\
        3	0.0384940699580679\\
        4	0.0844207065892325\\
        5	0.157354090962062\\
        6	0.255180539447881\\
        };
        \addplot [color=mycolor2, line width=1pt, mark size=1.5pt, mark=*, mark options={solid, mycolor2}, forget plot]
          table[row sep=crcr]{%
        2	0.0142378138554095\\
        3	0.0483440920426073\\
        4	0.128465307097121\\
        5	0.294214747440343\\
        6	0.568495498824089\\
        };
        \addplot [color=mycolor3, line width=1pt, mark size=1.5pt, mark=*, mark options={solid, mycolor3}, forget plot]
          table[row sep=crcr]{%
        2	0.014309642481936\\
        3	0.0529094702345216\\
        4	0.150928864599452\\
        5	0.373541985076262\\
        6	0.79580456982689\\
        };
        \addplot [color=mycolor4, line width=1.5pt, mark size=2.5pt, dotted, mark=x, mark options={solid, mycolor4}, forget plot]
          table[row sep=crcr]{%
        2	0.0143096424819362\\
        3	0.0413139320495366\\
        4	0.0900594130484497\\
        5	0.173125913188797\\
        6	0.298906134007341\\
        };
        \addplot [color=mycolor1, dashed, line width=1pt, mark size=1.5pt, mark=square, mark options={solid, mycolor1}, forget plot]
          table[row sep=crcr]{%
        2	0.0131195859007948\\
        3	0.0124937389498109\\
        4	0.011555392018312\\
        5	0.0104082091822464\\
        6	0.00934424439275446\\
        };
        \addplot [color=mycolor2, dashed, line width=1pt, mark size=1.5pt, mark=square, mark options={solid, mycolor2}, forget plot]
          table[row sep=crcr]{%
        2	0.0142378138554095\\
        3	0.0135501078825623\\
        4	0.0125995367554516\\
        5	0.0116614098406755\\
        6	0.0113893292004962\\
        };
        \addplot [color=mycolor3, dashed, line width=1pt, mark size=1.5pt, mark=square, mark options={solid, mycolor3}, forget plot]
          table[row sep=crcr]{%
        2	0.014309642481936\\
        3	0.0139521829612406\\
        4	0.0135056111994526\\
        5	0.0134545053074463\\
        6	0.0153262052234104\\
        };
        \addplot [color=mycolor4, dashed, line width=1pt, mark size=1.5pt, mark=square, mark options={solid, mycolor4}, forget plot]
          table[row sep=crcr]{%
        2	nan\\
        3	nan\\
        4	nan\\
        5	nan\\
        6	nan\\
        };
        \end{axis}
        \end{tikzpicture}%

    \end{subfigure}\qquad
    \begin{subfigure}[b]{0.3\textwidth}
        \definecolor{mycolor1}{rgb}{0.00000,0.44700,0.74100}%
        \definecolor{mycolor2}{rgb}{0.85000,0.32500,0.09800}%
        \definecolor{mycolor3}{rgb}{0.92900,0.69400,0.12500}%
        \definecolor{mycolor4}{rgb}{0.49400,0.18400,0.55600}%
        \begin{tikzpicture}

        \begin{axis}[%
        width=\figurewidth,
        height=0.849\figureheight,
        at={(0\figurewidth,0\figureheight)},
        scale only axis,
        unbounded coords=jump,
        xmin=2,
        xmax=6,
        xtick={2, 3, 4, 5, 6},
        xlabel style={font=\color{white!15!black}},
        xlabel={Number of levels},
        ymode=log,
        ymin=0.008,
        ymax=1,
        yminorticks=true,
        yticklabels={,,},
        axis background/.style={fill=white},
        title style={font=\bfseries},
        title={$\mathbf{r = 1}$}
        ]
        \addplot [color=mycolor1, line width=1pt, mark size=1.5pt, mark=*, mark options={solid, mycolor1}, forget plot]
          table[row sep=crcr]{%
        2	0.0123457472619505\\
        3	0.0327098739476685\\
        4	0.0607412832106444\\
        5	0.0895966063932679\\
        6	0.107472590436227\\
        };
        \addplot [color=mycolor2, line width=1pt, mark size=1.5pt, mark=*, mark options={solid, mycolor2}, forget plot]
          table[row sep=crcr]{%
        2	0.0134102261971545\\
        3	0.0435819837148031\\
        4	0.106342790643953\\
        5	0.209329349119447\\
        6	0.321003132687412\\
        };
        \addplot [color=mycolor3, line width=1pt, mark size=1.5pt, mark=*, mark options={solid, mycolor3}, forget plot]
          table[row sep=crcr]{%
        2	0.0134781354311136\\
        3	0.0477738018900641\\
        4	0.125608214773449\\
        5	0.26857847246042\\
        6	0.452854067978917\\
        };
        \addplot [color=mycolor4, line width=1.5pt, mark size=2.5pt, dotted, mark=x, mark options={solid, mycolor4}, forget plot]
          table[row sep=crcr]{%
        2	0.0134781354311138\\
        3	0.0406712806437466\\
        4	0.0944569507240598\\
        5	0.184168907556853\\
        6	0.291190628110804\\
        };
        \addplot [color=mycolor1, dashed, line width=1pt, mark size=1.5pt, mark=square, mark options={solid, mycolor1}, forget plot]
          table[row sep=crcr]{%
        2	0.0123457472619505\\
        3	0.0118818705432437\\
        4	0.0112962549439031\\
        5	0.0107726271789498\\
        6	0.010508942223429\\
        };
        \addplot [color=mycolor2, dashed, line width=1pt, mark size=1.5pt, mark=square, mark options={solid, mycolor2}, forget plot]
          table[row sep=crcr]{%
        2	0.0134102261971543\\
        3	0.0128973586317559\\
        4	0.0122871514457647\\
        5	0.0117852946233255\\
        6	0.0115690400029249\\
        };
        \addplot [color=mycolor3, dashed, line width=1pt, mark size=1.5pt, mark=square, mark options={solid, mycolor3}, forget plot]
          table[row sep=crcr]{%
        2	0.0134781354311135\\
        3	0.0132128918350252\\
        4	0.0129436690545357\\
        5	0.0129886140847788\\
        6	0.0131949912778645\\
        };
        \addplot [color=mycolor4, dashed, line width=1pt, mark size=1.5pt, mark=square, mark options={solid, mycolor4}, forget plot]
          table[row sep=crcr]{%
        2	nan\\
        3	nan\\
        4	nan\\
        5	nan\\
        6	nan\\
        };
        \end{axis}
        \end{tikzpicture}%

    \end{subfigure}
   \hspace{-0.75cm}
    \begin{subfigure}[b]{0.2\textwidth}
        \raisebox{0.5cm}{
            \definecolor{mycolor1}{rgb}{0.00000,0.44700,0.74100}%
            \definecolor{mycolor2}{rgb}{0.85000,0.32500,0.09800}%
            \definecolor{mycolor3}{rgb}{0.92900,0.69400,0.12500}%
            \definecolor{mycolor4}{rgb}{0.49400,0.18400,0.55600}%
            \begin{tikzpicture}

            \begin{axis}[%
            width=\figurewidth,
            height=0.8\figureheight,
            at={(0\figurewidth,0\figureheight)},
            hide axis,
            unbounded coords=jump,
            xmin=2,
            xmax=6,
            ymin=0,
            ymax=1,
            axis background/.style={fill=white},
            title style={font=\bfseries},
            legend style={at={(-0.03,0.5)}, anchor=center, legend cell align=left, align=left, fill=none, draw=none}
            ]

            \addlegendimage{color=mycolor1, line width=1pt, mark size=1.5pt, mark=*}
            \addlegendimage{color=mycolor2, line width=1pt, mark size=1.5pt, mark=*}
            \addlegendimage{color=mycolor3, line width=1pt, mark size=1.5pt, mark=*}
            \addlegendimage{color=mycolor4, line width=1.5pt, mark size=2.5pt, dotted, mark=x}
            \addlegendimage{color=mycolor1, line width=1pt, dashed, mark size=1.5pt, mark=square, mark options={solid, mycolor1}}
            \addlegendimage{color=mycolor2, line width=1pt, dashed, mark size=1.5pt, mark=square, mark options={solid, mycolor2}}
            \addlegendimage{color=mycolor3, line width=1pt, dashed, mark size=1.5pt, mark=square, mark options={solid, mycolor3}}
            \addlegendimage{color=mycolor4, line width=1pt, dashed, mark size=1.5pt, mark=square, mark options={solid, mycolor4}}

            \addlegendentry{~$\max_i \|\mathbf{r}_{i+1}\|_2 / \|\mathbf{r}_i\|_2$ (V-cycle)}
            \addlegendentry{~$\| \mathcal{E}_{rFCF}^{n_\ell,\Delta} \|_2$}
            \addlegendentry{~$\sqrt{ \| \mathcal{E}_{rFCF}^{n_\ell,\Delta} \|_1 \| \mathcal{E}_{rFCF}^{n_\ell,\Delta} \|_\infty }$}
            \addlegendentry{~Approximation \ref{cf-nl-F-app} / \ref{cf-nl-FCF-app}}
            \addlegendentry{~$\max_i \|\mathbf{r}_{i+1}\|_2 / \|\mathbf{r}_i\|_2$ (F-cycle)}
            \addlegendentry{~$\| \mathcal{F}_{rFCF}^{n_\ell,\Delta} \|_2$}
            \addlegendentry{~$\sqrt{ \| \mathcal{F}_{rFCF}^{n_\ell,\Delta} \|_1 \| \mathcal{F}_{rFCF}^{n_\ell,\Delta} \|_\infty }$}
            \end{axis}
            \end{tikzpicture}%

        }
    \end{subfigure}
    \caption{Anisotropic diffusion: Comparison of V- and F-cycle MGRIT with F-relaxation ($r = 0$)
    and FCF-relaxation ($r = 1$).
    With a growing number of time grids, the convergence factor increases relatively quickly
    for V-cycle MGRIT.
    On the other hand, F-cycle MGRIT yields a nearly constant convergence factor, and thus,
    a more robust algorithm.}
    \label{diffusion-anisotropic-V-F-cycle-nt1024-F-FCF-relaxation-nn11x11-suppfig}
\end{figure}
\FloatBarrier

\subsection{Wave equation}
\label{wave-equation-additional-numerical-results-suppsec}
~
\FloatBarrier
\begin{figure}[h!]
    \centering
    \hspace{-1.75cm}
    \setlength{\figurewidth}{0.3\linewidth}
    \setlength{\figureheight}{0.25\linewidth}
    \begin{subfigure}[b]{0.4\textwidth}
        \hspace{0.75cm}
        \raisebox{1.25cm}{
            \definecolor{mycolor1}{rgb}{0.00000,0.44700,0.74100}%
            \definecolor{mycolor2}{rgb}{0.85000,0.32500,0.09800}%
            \definecolor{mycolor3}{rgb}{0.92900,0.69400,0.12500}%
            \definecolor{mycolor4}{rgb}{0.49400,0.18400,0.55600}%
            \definecolor{mycolor5}{rgb}{0.46600,0.67400,0.18800}%
            \begin{tikzpicture}

            \begin{axis}[%
            width=0.8\figurewidth,
            height=\figureheight,
            at={(0\figurewidth,0\figureheight)},
            hide axis,
            unbounded coords=jump,
            xmin=2,
            xmax=6,
            ymin=0.1,
            ymax=1,
            axis background/.style={fill=white},
            legend style={at={(0.7,0.5)}, anchor=center, legend cell align=left, align=left, fill=none, draw=none}
            ]

            \addlegendimage{color=mycolor1, line width=1pt, mark size=1.5pt, mark=*}
            \addlegendimage{color=mycolor2, line width=1pt, mark size=1.5pt, mark=*}
            \addlegendimage{color=mycolor3, line width=1pt, mark size=1.5pt, mark=*}
            \addlegendimage{color=mycolor4, line width=1.5pt, mark size=2.5pt, dotted, mark=x}

            \addlegendentry{\footnotesize~$\max_i \|\mathbf{r}_{i+1}\|_2 / \|\mathbf{r}_i\|_2$}
            \addlegendentry{\footnotesize~$\| \mathcal{E}_{F}^{n_\ell,\Delta} \|_2$}
            \addlegendentry{\footnotesize~$\sqrt{ \| \mathcal{E}_{F}^{n_\ell,\Delta} \|_1 \| \mathcal{E}_{F}^{n_\ell,\Delta} \|_\infty }$}
            \addlegendentry{\footnotesize~Approximation \ref{cf-nl-F-app}}

            \end{axis}
            \end{tikzpicture}%

        }
    \end{subfigure}\qquad
    \setlength{\figurewidth}{0.4\linewidth}
    \setlength{\figureheight}{0.25\linewidth}
    \begin{subfigure}[b]{0.4\textwidth}
        \definecolor{mycolor1}{rgb}{0.00000,0.44700,0.74100}%
        \definecolor{mycolor2}{rgb}{0.85000,0.32500,0.09800}%
        \definecolor{mycolor3}{rgb}{0.92900,0.69400,0.12500}%
        \definecolor{mycolor4}{rgb}{0.49400,0.18400,0.55600}%
        \begin{tikzpicture}

        \begin{axis}[%
        width=\figurewidth,
        height=0.849\figureheight,
        at={(0\figurewidth,0\figureheight)},
        scale only axis,
        xmin=2,
        xmax=6,
        xtick={2, 3, 4, 5, 6},
        xlabel style={font=\color{white!15!black}},
        xlabel={Number of levels},
        ymode=log,
        ymin=0.1,
        ymax=10,
        yminorticks=true,
        ylabel style={font=\color{white!15!black}},
        ylabel={Convergence factor},
        axis background/.style={fill=white},
        title style={font=\bfseries},
        title={L-stable SDIRK1}
        ]
        \addplot [color=mycolor1, line width=1pt, mark size=1.5pt, mark=*, mark options={solid, mycolor1}, forget plot]
          table[row sep=crcr]{%
        2	0.427041546965819\\
        3	0.697356193732166\\
        4	0.885290322467072\\
        5	1.08908034542197\\
        6	1.29142901736183\\
        };
        \addplot [color=mycolor2, line width=1pt, mark size=1.5pt, mark=*, mark options={solid, mycolor2}, forget plot]
          table[row sep=crcr]{%
        2	0.468801233109855\\
        3	0.889830599573516\\
        4	1.38316523757003\\
        5	2.03278922931191\\
        6	2.86114562876203\\
        };
        \addplot [color=mycolor3, line width=1pt, mark size=1.5pt, mark=*, mark options={solid, mycolor3}, forget plot]
          table[row sep=crcr]{%
        2	0.499699153253868\\
        3	0.978986883120193\\
        4	1.59184027534479\\
        5	2.52276574550005\\
        6	3.76051723319965\\
        };
        \addplot [color=mycolor4, line width=1.5pt, mark size=2.5pt, dotted, mark=x, mark options={solid, mycolor4}, forget plot]
          table[row sep=crcr]{%
        2	0.499701146340221\\
        3	0.754307195458225\\
        4	0.929947341797284\\
        5	1.17414336526808\\
        6	1.5111453625552\\
        };
        \end{axis}
        \end{tikzpicture}%
    \end{subfigure}\\[1ex]
    \setlength{\figurewidth}{0.4\linewidth}
    \setlength{\figureheight}{0.25\linewidth}
    \begin{subfigure}[b]{0.4\textwidth}
        \definecolor{mycolor1}{rgb}{0.00000,0.44700,0.74100}%
        \definecolor{mycolor2}{rgb}{0.85000,0.32500,0.09800}%
        \definecolor{mycolor3}{rgb}{0.92900,0.69400,0.12500}%
        \definecolor{mycolor4}{rgb}{0.49400,0.18400,0.55600}%
        \begin{tikzpicture}

        \begin{axis}[%
        width=\figurewidth,
        height=0.849\figureheight,
        at={(0\figurewidth,0\figureheight)},
        scale only axis,
        xmin=2,
        xmax=6,
        xtick={2, 3, 4, 5, 6},
        xlabel style={font=\color{white!15!black}},
        xlabel={Number of levels},
        ymode=log,
        ymin=0.01,
        ymax=10,
        yminorticks=true,
        ylabel style={font=\color{white!15!black}},
        ylabel={Convergence factor},
        axis background/.style={fill=white},
        title style={font=\bfseries},
        title={A-stable SDIRK2}
        ]
        \addplot [color=mycolor1, line width=1pt, mark size=1.5pt, mark=*, mark options={solid, mycolor1}, forget plot]
          table[row sep=crcr]{%
        2	0.0127833327193592\\
        3	0.0633750542431069\\
        4	0.264323009683932\\
        5	1.10102585961521\\
        6	3.09062025058423\\
        };
        \addplot [color=mycolor2, line width=1pt, mark size=1.5pt, mark=*, mark options={solid, mycolor2}, forget plot]
          table[row sep=crcr]{%
        2	0.026150104576929\\
        3	0.166703421292912\\
        4	0.942116135122626\\
        5	5.02957509119582\\
        6	23.1939291234184\\
        };
        \addplot [color=mycolor3, line width=1pt, mark size=1.5pt, mark=*, mark options={solid, mycolor3}, forget plot]
          table[row sep=crcr]{%
        2	0.0410363975297065\\
        3	0.274074608708593\\
        4	1.57453590776184\\
        5	8.46584955261564\\
        6	39.434832395859\\
        };
        \addplot [color=mycolor4, line width=1.5pt, mark size=2.5pt, dotted, mark=x, mark options={solid, mycolor4}, forget plot]
          table[row sep=crcr]{%
        2	0.0410764525880668\\
        3	0.204305763273798\\
        4	0.847155825344775\\
        5	3.28038580657159\\
        6	11.1792076528893\\
        };
        \end{axis}
        \end{tikzpicture}%
    \end{subfigure}\qquad\qquad\qquad
    \begin{subfigure}[b]{0.4\textwidth}
        \definecolor{mycolor1}{rgb}{0.00000,0.44700,0.74100}%
        \definecolor{mycolor2}{rgb}{0.85000,0.32500,0.09800}%
        \definecolor{mycolor3}{rgb}{0.92900,0.69400,0.12500}%
        \definecolor{mycolor4}{rgb}{0.49400,0.18400,0.55600}%
        \begin{tikzpicture}

        \begin{axis}[%
        width=\figurewidth,
        height=0.849\figureheight,
        at={(0\figurewidth,0\figureheight)},
        scale only axis,
        xmin=2,
        xmax=6,
        xtick={2, 3, 4, 5, 6},
        xlabel style={font=\color{white!15!black}},
        xlabel={Number of levels},
        ymode=log,
        ymin=0.01,
        ymax=10,
        yminorticks=true,
        yticklabels={,,},
        axis background/.style={fill=white},
        title style={font=\bfseries},
        title={L-stable SDIRK2}
        ]
        \addplot [color=mycolor1, line width=1pt, mark size=1.5pt, mark=*, mark options={solid, mycolor1}, forget plot]
          table[row sep=crcr]{%
        2	0.0247939919863588\\
        3	0.122424609666652\\
        4	0.468977720660123\\
        5	1.72165372668413\\
        6	3.21939983261641\\
        };
        \addplot [color=mycolor2, line width=1pt, mark size=1.5pt, mark=*, mark options={solid, mycolor2}, forget plot]
          table[row sep=crcr]{%
        2	0.0506518870945083\\
        3	0.320174560708435\\
        4	1.72249877520988\\
        5	7.1367790651213\\
        6	15.784410020929\\
        };
        \addplot [color=mycolor3, line width=1pt, mark size=1.5pt, mark=*, mark options={solid, mycolor3}, forget plot]
          table[row sep=crcr]{%
        2	0.079473689509342\\
        3	0.525823098162095\\
        4	2.85448945936512\\
        5	11.3537624919868\\
        6	21.6914646429558\\
        };
        \addplot [color=mycolor4, line width=1.5pt, mark size=2.5pt, dotted, mark=x, mark options={solid, mycolor4}, forget plot]
          table[row sep=crcr]{%
        2	0.0795511981633453\\
        3	0.392020146479185\\
        4	1.5363420718136\\
        5	4.3883904014382\\
        6	6.03423051914282\\
        };
        \end{axis}
        \end{tikzpicture}%
    \end{subfigure}\\[1ex]
    \setlength{\figurewidth}{0.4\linewidth}
    \setlength{\figureheight}{0.25\linewidth}
    \begin{subfigure}[b]{0.4\textwidth}
        \definecolor{mycolor1}{rgb}{0.00000,0.44700,0.74100}%
        \definecolor{mycolor2}{rgb}{0.85000,0.32500,0.09800}%
        \definecolor{mycolor3}{rgb}{0.92900,0.69400,0.12500}%
        \definecolor{mycolor4}{rgb}{0.49400,0.18400,0.55600}%
        \begin{tikzpicture}

        \begin{axis}[%
        width=\figurewidth,
        height=0.849\figureheight,
        at={(0\figurewidth,0\figureheight)},
        scale only axis,
        xmin=2,
        xmax=6,
        xtick={2, 3, 4, 5, 6},
        xlabel style={font=\color{white!15!black}},
        xlabel={Number of levels},
        ymode=log,
        ymin=0.001,
        ymax=10,
        yminorticks=true,
        ylabel style={font=\color{white!15!black}},
        ylabel={Convergence factor},
        axis background/.style={fill=white},
        title style={font=\bfseries},
        title={A-stable SDIRK3}
        ]
        \addplot [color=mycolor1, line width=1pt, mark size=1.5pt, mark=*, mark options={solid, mycolor1}, forget plot]
          table[row sep=crcr]{%
        2	0.0108559985932353\\
        3	0.0967428174584231\\
        4	0.523105838294067\\
        5	1.39549312268753\\
        6	3.24629846629937\\
        };
        \addplot [color=mycolor2, line width=1pt, mark size=1.5pt, mark=*, mark options={solid, mycolor2}, forget plot]
          table[row sep=crcr]{%
        2	0.021907406569714\\
        3	0.226521508479596\\
        4	1.37081640073273\\
        5	4.42595741436157\\
        6	12.3980503126809\\
        };
        \addplot [color=mycolor3, line width=1pt, mark size=1.5pt, mark=*, mark options={solid, mycolor3}, forget plot]
          table[row sep=crcr]{%
        2	0.0342512610093387\\
        3	0.355989831447296\\
        4	1.93691321862813\\
        5	5.328235814984\\
        6	15.167467938439\\
        };
        \addplot [color=mycolor4, line width=1.5pt, mark size=2.5pt, dotted, mark=x, mark options={solid, mycolor4}, forget plot]
          table[row sep=crcr]{%
        2	0.0342840400100247\\
        3	0.260112541411072\\
        4	1.0110995623275\\
        5	2.00293926883872\\
        6	4.2892737336012\\
        };
        \end{axis}
        \end{tikzpicture}%
    \end{subfigure}\qquad\qquad\qquad
    \begin{subfigure}[b]{0.4\textwidth}
        \definecolor{mycolor1}{rgb}{0.00000,0.44700,0.74100}%
        \definecolor{mycolor2}{rgb}{0.85000,0.32500,0.09800}%
        \definecolor{mycolor3}{rgb}{0.92900,0.69400,0.12500}%
        \definecolor{mycolor4}{rgb}{0.49400,0.18400,0.55600}%
        \begin{tikzpicture}

        \begin{axis}[%
        width=\figurewidth,
        height=0.849\figureheight,
        at={(0\figurewidth,0\figureheight)},
        scale only axis,
        xmin=2,
        xmax=6,
        xtick={2, 3, 4, 5, 6},
        xlabel style={font=\color{white!15!black}},
        xlabel={Number of levels},
        ymode=log,
        ymin=0.001,
        ymax=10,
        yminorticks=true,
        yticklabels={,,},
        axis background/.style={fill=white},
        title style={font=\bfseries},
        title={L-stable SDIRK3}
        ]
        \addplot [color=mycolor1, line width=1pt, mark size=1.5pt, mark=*, mark options={solid, mycolor1}, forget plot]
          table[row sep=crcr]{%
        2	0.00267532614256879\\
        3	0.0276828337210245\\
        4	0.230992577840743\\
        5	0.748016324348825\\
        6	1.52680015494383\\
        };
        \addplot [color=mycolor2, line width=1pt, mark size=1.5pt, mark=*, mark options={solid, mycolor2}, forget plot]
          table[row sep=crcr]{%
        2	0.00646766728042624\\
        3	0.0736806876184319\\
        4	0.626182742109267\\
        5	2.61667414948779\\
        6	6.00752107585315\\
        };
        \addplot [color=mycolor3, line width=1pt, mark size=1.5pt, mark=*, mark options={solid, mycolor3}, forget plot]
          table[row sep=crcr]{%
        2	0.0101383706114157\\
        3	0.117835592447524\\
        4	0.959319393506406\\
        5	3.33462864135975\\
        6	6.85261153240434\\
        };
        \addplot [color=mycolor4, line width=1.5pt, mark size=2.5pt, dotted, mark=x, mark options={solid, mycolor4}, forget plot]
          table[row sep=crcr]{%
        2	0.0101482091636581\\
        3	0.0860623337701928\\
        4	0.500400895580734\\
        5	1.2378587340703\\
        6	1.87805859163489\\
        };
        \end{axis}
        \end{tikzpicture}%
    \end{subfigure}\\[1ex]
    \setlength{\figurewidth}{0.4\linewidth}
    \setlength{\figureheight}{0.25\linewidth}
    \begin{subfigure}[b]{0.4\textwidth}
        \definecolor{mycolor1}{rgb}{0.00000,0.44700,0.74100}%
        \definecolor{mycolor2}{rgb}{0.85000,0.32500,0.09800}%
        \definecolor{mycolor3}{rgb}{0.92900,0.69400,0.12500}%
        \definecolor{mycolor4}{rgb}{0.49400,0.18400,0.55600}%
        \begin{tikzpicture}

        \begin{axis}[%
        width=\figurewidth,
        height=0.849\figureheight,
        at={(0\figurewidth,0\figureheight)},
        scale only axis,
        xmin=2,
        xmax=6,
        xtick={2, 3, 4, 5, 6},
        xlabel style={font=\color{white!15!black}},
        xlabel={Number of levels},
        ymode=log,
        ymin=1e-07,
        ymax=10,
        yminorticks=true,
        ylabel style={font=\color{white!15!black}},
        ylabel={Convergence factor},
        axis background/.style={fill=white},
        title style={font=\bfseries},
        title={A-stable SDIRK4}
        ]
        \addplot [color=mycolor1, line width=1pt, mark size=1.5pt, mark=*, mark options={solid, mycolor1}, forget plot]
          table[row sep=crcr]{%
        2	0.00416116422867001\\
        3	0.0532092243096273\\
        4	0.454233567486202\\
        5	1.05312219362023\\
        6	2.35611179484748\\
        };
        \addplot [color=mycolor2, line width=1pt, mark size=1.5pt, mark=*, mark options={solid, mycolor2}, forget plot]
          table[row sep=crcr]{%
        2	0.00728731413082763\\
        3	0.139960613173144\\
        4	1.17084555216597\\
        5	3.31452111263316\\
        6	8.71163837167512\\
        };
        \addplot [color=mycolor3, line width=1pt, mark size=1.5pt, mark=*, mark options={solid, mycolor3}, forget plot]
          table[row sep=crcr]{%
        2	0.0114298022668852\\
        3	0.220527433646191\\
        4	1.61395749312456\\
        5	3.75065099775646\\
        6	10.2911501033451\\
        };
        \addplot [color=mycolor4, line width=1.5pt, mark size=2.5pt, dotted, mark=x, mark options={solid, mycolor4}, forget plot]
          table[row sep=crcr]{%
        2	0.0114409281435788\\
        3	0.159281724090351\\
        4	0.830915931104776\\
        5	1.3979905460933\\
        6	2.93933857474756\\
        };
        \end{axis}
        \end{tikzpicture}%
    \end{subfigure}\qquad\qquad\qquad
    \begin{subfigure}[b]{0.4\textwidth}
        \definecolor{mycolor1}{rgb}{0.00000,0.44700,0.74100}%
        \definecolor{mycolor2}{rgb}{0.85000,0.32500,0.09800}%
        \definecolor{mycolor3}{rgb}{0.92900,0.69400,0.12500}%
        \definecolor{mycolor4}{rgb}{0.49400,0.18400,0.55600}%
        \begin{tikzpicture}

        \begin{axis}[%
        width=\figurewidth,
        height=0.849\figureheight,
        at={(0\figurewidth,0\figureheight)},
        scale only axis,
        xmin=2,
        xmax=6,
        xtick={2, 3, 4, 5, 6},
        xlabel style={font=\color{white!15!black}},
        xlabel={Number of levels},
        ymode=log,
        ymin=1e-07,
        ymax=10,
        yminorticks=true,
        yticklabels={,,},
        axis background/.style={fill=white},
        title style={font=\bfseries},
        title={L-stable SDIRK4}
        ]
        \addplot [color=mycolor1, line width=1pt, mark size=1.5pt, mark=*, mark options={solid, mycolor1}, forget plot]
          table[row sep=crcr]{%
        2	5.98107521206688e-07\\
        3	9.98580114902018e-06\\
        4	0.00533762570255451\\
        5	0.0949956677157784\\
        6	0.790319425321937\\
        };
        \addplot [color=mycolor2, line width=1pt, mark size=1.5pt, mark=*, mark options={solid, mycolor2}, forget plot]
          table[row sep=crcr]{%
        2	3.9550907870132e-05\\
        3	0.000919829838534665\\
        4	0.0205280468717682\\
        5	0.432818497245124\\
        6	6.86639617199081\\
        };
        \addplot [color=mycolor3, line width=1pt, mark size=1.5pt, mark=*, mark options={solid, mycolor3}, forget plot]
          table[row sep=crcr]{%
        2	6.20657832496286e-05\\
        3	0.00146450009421774\\
        4	0.0327299778185826\\
        5	0.690307526976447\\
        6	10.9464437968763\\
        };
        \addplot [color=mycolor4, line width=1.5pt, mark size=2.5pt, dotted, mark=x, mark options={solid, mycolor4}, forget plot]
          table[row sep=crcr]{%
        2	6.21263647978498e-05\\
        3	0.00105588530497039\\
        4	0.016808693765926\\
        5	0.253854679058847\\
        6	2.92444417966326\\
        };
        \end{axis}
        \end{tikzpicture}%
    \end{subfigure}
    \caption{Wave equation: Observed convergence and predicted upper bounds on convergence
        of MGRIT with V-cycles and F-relaxation shows very similar trends as for MGRIT
        with V-cycles and FCF-relaxation,
        see Figure \ref{wave-V-cycle-nt1024-FCF-relaxation-nn11x11-suppfig}.
        This shows that switching from F-relaxation to FCF-relaxation alone is not sufficient
        to yield a robust MGRIT algorithm for the wave equation (and likely, other hyperbolic
        PDEs).}
    \label{wave-V-cycle-nt1024-F-relaxation-nn11x11-suppfig}
\end{figure}
\FloatBarrier
\FloatBarrier
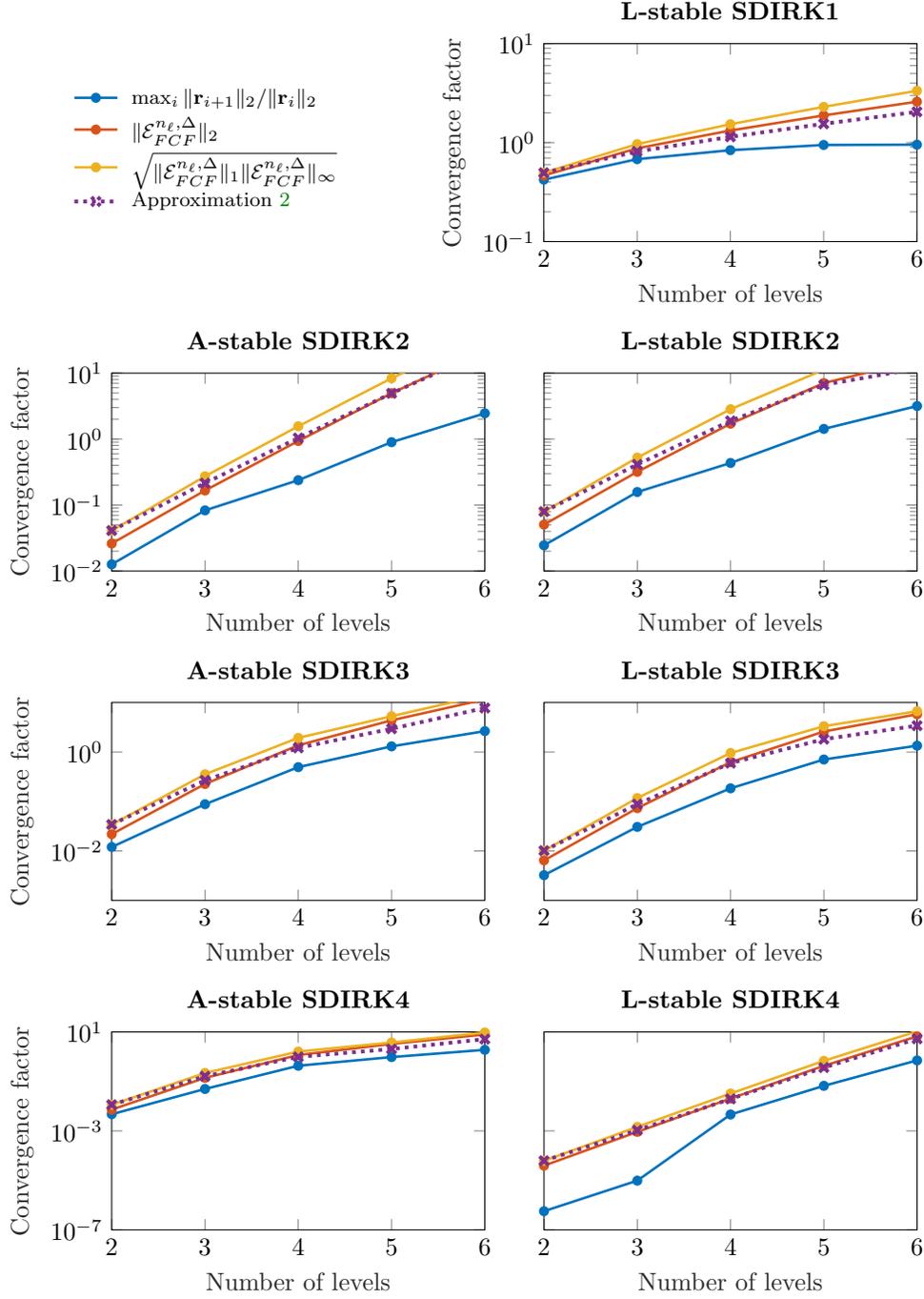
\begin{figure}[h!]
    \centering
    \hspace{-1.75cm}
    \setlength{\figurewidth}{0.3\linewidth}
    \setlength{\figureheight}{0.25\linewidth}
    \begin{subfigure}[b]{0.4\textwidth}
        \hspace{0.75cm}
        \raisebox{1.25cm}{
            \definecolor{mycolor1}{rgb}{0.00000,0.44700,0.74100}%
            \definecolor{mycolor2}{rgb}{0.85000,0.32500,0.09800}%
            \definecolor{mycolor3}{rgb}{0.92900,0.69400,0.12500}%
            \definecolor{mycolor4}{rgb}{0.49400,0.18400,0.55600}%
            \definecolor{mycolor5}{rgb}{0.46600,0.67400,0.18800}%
            \begin{tikzpicture}

            \begin{axis}[%
            width=0.8\figurewidth,
            height=\figureheight,
            at={(0\figurewidth,0\figureheight)},
            hide axis,
            unbounded coords=jump,
            xmin=2,
            xmax=6,
            ymin=0.1,
            ymax=1,
            axis background/.style={fill=white},
            legend style={at={(0.7,0.5)}, anchor=center, legend cell align=left, align=left, fill=none, draw=none}
            ]

            \addlegendimage{color=mycolor1, line width=1pt, mark size=1.5pt, mark=*}
            \addlegendimage{color=mycolor2, line width=1pt, mark size=1.5pt, mark=*}
            \addlegendimage{color=mycolor3, line width=1pt, mark size=1.5pt, mark=*}
            \addlegendimage{color=mycolor4, line width=1.5pt, mark size=2.5pt, dotted, mark=x}

            \addlegendentry{\footnotesize~$\max_i \|\mathbf{r}_{i+1}\|_2 / \|\mathbf{r}_i\|_2$}
            \addlegendentry{\footnotesize~$\| \mathcal{E}_{FCF}^{n_\ell,\Delta} \|_2$}
            \addlegendentry{\footnotesize~$\sqrt{ \| \mathcal{E}_{FCF}^{n_\ell,\Delta} \|_1 \| \mathcal{E}_{FCF}^{n_\ell,\Delta} \|_\infty }$}
            \addlegendentry{\footnotesize~Approximation \ref{cf-nl-FCF-app}}

            \end{axis}
            \end{tikzpicture}%

        }
    \end{subfigure}\qquad
    \setlength{\figurewidth}{0.4\linewidth}
    \setlength{\figureheight}{0.25\linewidth}
    \begin{subfigure}[b]{0.4\textwidth}
        \definecolor{mycolor1}{rgb}{0.00000,0.44700,0.74100}%
        \definecolor{mycolor2}{rgb}{0.85000,0.32500,0.09800}%
        \definecolor{mycolor3}{rgb}{0.92900,0.69400,0.12500}%
        \definecolor{mycolor4}{rgb}{0.49400,0.18400,0.55600}%
        \begin{tikzpicture}

        \begin{axis}[%
        width=\figurewidth,
        height=0.849\figureheight,
        at={(0\figurewidth,0\figureheight)},
        scale only axis,
        xmin=2,
        xmax=6,
        xtick={2, 3, 4, 5, 6},
        xlabel style={font=\color{white!15!black}},
        xlabel={Number of levels},
        ymode=log,
        ymin=0.1,
        ymax=10,
        yminorticks=true,
        ylabel style={font=\color{white!15!black}},
        ylabel={Convergence factor},
        axis background/.style={fill=white},
        title style={font=\bfseries},
        title={L-stable SDIRK1}
        ]
        \addplot [color=mycolor1, line width=1pt, mark size=1.5pt, mark=*, mark options={solid, mycolor1}, forget plot]
          table[row sep=crcr]{%
        2	0.422707263086206\\
        3	0.68091720393957\\
        4	0.839232893133704\\
        5	0.94729820659592\\
        6	0.953971343349235\\
        };
        \addplot [color=mycolor2, line width=1pt, mark size=1.5pt, mark=*, mark options={solid, mycolor2}, forget plot]
          table[row sep=crcr]{%
        2	0.465239435266433\\
        3	0.87618639321187\\
        4	1.32606370436559\\
        5	1.88461267759253\\
        6	2.59363057276779\\
        };
        \addplot [color=mycolor3, line width=1pt, mark size=1.5pt, mark=*, mark options={solid, mycolor3}, forget plot]
          table[row sep=crcr]{%
        2	0.496077405867888\\
        3	0.966454780947916\\
        4	1.53431528464857\\
        5	2.29931912375723\\
        6	3.34323662110172\\
        };
        \addplot [color=mycolor4, line width=1.5pt, mark size=2.5pt, dotted, mark=x, mark options={solid, mycolor4}, forget plot]
          table[row sep=crcr]{%
        2	0.49608703352612\\
        3	0.814029564157985\\
        4	1.14464911743607\\
        5	1.54968560489753\\
        6	2.04914673076731\\
        };
        \end{axis}
        \end{tikzpicture}%
    \end{subfigure}\\[1ex]
    \setlength{\figurewidth}{0.4\linewidth}
    \setlength{\figureheight}{0.25\linewidth}
    \begin{subfigure}[b]{0.4\textwidth}
        \definecolor{mycolor1}{rgb}{0.00000,0.44700,0.74100}%
        \definecolor{mycolor2}{rgb}{0.85000,0.32500,0.09800}%
        \definecolor{mycolor3}{rgb}{0.92900,0.69400,0.12500}%
        \definecolor{mycolor4}{rgb}{0.49400,0.18400,0.55600}%
        \begin{tikzpicture}

        \begin{axis}[%
        width=\figurewidth,
        height=0.849\figureheight,
        at={(0\figurewidth,0\figureheight)},
        scale only axis,
        xmin=2,
        xmax=6,
        xtick={2, 3, 4, 5, 6},
        xlabel style={font=\color{white!15!black}},
        xlabel={Number of levels},
        ymode=log,
        ymin=0.01,
        ymax=10,
        yminorticks=true,
        ylabel style={font=\color{white!15!black}},
        ylabel={Convergence factor},
        axis background/.style={fill=white},
        title style={font=\bfseries},
        title={A-stable SDIRK2}
        ]
        \addplot [color=mycolor1, line width=1pt, mark size=1.5pt, mark=*, mark options={solid, mycolor1}, forget plot]
          table[row sep=crcr]{%
        2	0.0126237948958803\\
        3	0.0829917403735648\\
        4	0.238225196343646\\
        5	0.897536255881032\\
        6	2.45683465444267\\
        };
        \addplot [color=mycolor2, line width=1pt, mark size=1.5pt, mark=*, mark options={solid, mycolor2}, forget plot]
          table[row sep=crcr]{%
        2	0.0260990800226153\\
        3	0.166092070101794\\
        4	0.935281356159273\\
        5	4.95670620876972\\
        6	22.5251279779128\\
        };
        \addplot [color=mycolor3, line width=1pt, mark size=1.5pt, mark=*, mark options={solid, mycolor3}, forget plot]
          table[row sep=crcr]{%
        2	0.0409562483157834\\
        3	0.273086683554486\\
        4	1.56342657983375\\
        5	8.34717901296332\\
        6	38.3423423076198\\
        };
        \addplot [color=mycolor4, line width=1.5pt, mark size=2.5pt, dotted, mark=x, mark options={solid, mycolor4}, forget plot]
          table[row sep=crcr]{%
        2	0.0410764525880669\\
        3	0.215015300138658\\
        4	1.03993298906818\\
        5	4.93670208755283\\
        6	21.3433329825369\\
        };
        \end{axis}
        \end{tikzpicture}%
    \end{subfigure}\qquad\qquad\qquad
    \begin{subfigure}[b]{0.4\textwidth}
        \definecolor{mycolor1}{rgb}{0.00000,0.44700,0.74100}%
        \definecolor{mycolor2}{rgb}{0.85000,0.32500,0.09800}%
        \definecolor{mycolor3}{rgb}{0.92900,0.69400,0.12500}%
        \definecolor{mycolor4}{rgb}{0.49400,0.18400,0.55600}%
        \begin{tikzpicture}

        \begin{axis}[%
        width=\figurewidth,
        height=0.849\figureheight,
        at={(0\figurewidth,0\figureheight)},
        scale only axis,
        xmin=2,
        xmax=6,
        xtick={2, 3, 4, 5, 6},
        xlabel style={font=\color{white!15!black}},
        xlabel={Number of levels},
        ymode=log,
        ymin=0.01,
        ymax=10,
        yminorticks=true,
        yticklabels={,,},
        axis background/.style={fill=white},
        title style={font=\bfseries},
        title={L-stable SDIRK2}
        ]
        \addplot [color=mycolor1, line width=1pt, mark size=1.5pt, mark=*, mark options={solid, mycolor1}, forget plot]
          table[row sep=crcr]{%
        2	0.0244843517742587\\
        3	0.157756502262385\\
        4	0.434494005491729\\
        5	1.42660225264305\\
        6	3.16908975550067\\
        };
        \addplot [color=mycolor2, line width=1pt, mark size=1.5pt, mark=*, mark options={solid, mycolor2}, forget plot]
          table[row sep=crcr]{%
        2	0.0505531000941099\\
        3	0.319006721670251\\
        4	1.71050760210563\\
        5	7.05669105132868\\
        6	15.5856601797381\\
        };
        \addplot [color=mycolor3, line width=1pt, mark size=1.5pt, mark=*, mark options={solid, mycolor3}, forget plot]
          table[row sep=crcr]{%
        2	0.0793185638122103\\
        3	0.523940814964597\\
        4	2.83541336422652\\
        5	11.243491032787\\
        6	21.5975279921208\\
        };
        \addplot [color=mycolor4, line width=1.5pt, mark size=2.5pt, dotted, mark=x, mark options={solid, mycolor4}, forget plot]
          table[row sep=crcr]{%
        2	0.0795511657749488\\
        3	0.412655086367942\\
        4	1.888039464055\\
        5	6.63891655230384\\
        6	11.6946879181383\\
        };
        \end{axis}
        \end{tikzpicture}%
    \end{subfigure}\\[1ex]
    \setlength{\figurewidth}{0.4\linewidth}
    \setlength{\figureheight}{0.25\linewidth}
    \begin{subfigure}[b]{0.4\textwidth}
        \definecolor{mycolor1}{rgb}{0.00000,0.44700,0.74100}%
        \definecolor{mycolor2}{rgb}{0.85000,0.32500,0.09800}%
        \definecolor{mycolor3}{rgb}{0.92900,0.69400,0.12500}%
        \definecolor{mycolor4}{rgb}{0.49400,0.18400,0.55600}%
        \begin{tikzpicture}

        \begin{axis}[%
        width=\figurewidth,
        height=0.849\figureheight,
        at={(0\figurewidth,0\figureheight)},
        scale only axis,
        xmin=2,
        xmax=6,
        xtick={2, 3, 4, 5, 6},
        xlabel style={font=\color{white!15!black}},
        xlabel={Number of levels},
        ymode=log,
        ymin=0.001,
        ymax=10,
        yminorticks=true,
        ylabel style={font=\color{white!15!black}},
        ylabel={Convergence factor},
        axis background/.style={fill=white},
        title style={font=\bfseries},
        title={A-stable SDIRK3}
        ]
        \addplot [color=mycolor1, line width=1pt, mark size=1.5pt, mark=*, mark options={solid, mycolor1}, forget plot]
          table[row sep=crcr]{%
        2	0.0120191052548178\\
        3	0.0881109705762433\\
        4	0.494484190643019\\
        5	1.30024226662209\\
        6	2.64021506714484\\
        };
        \addplot [color=mycolor2, line width=1pt, mark size=1.5pt, mark=*, mark options={solid, mycolor2}, forget plot]
          table[row sep=crcr]{%
        2	0.0218651189923281\\
        3	0.225745865869666\\
        4	1.36422939994418\\
        5	4.37124060956429\\
        6	11.5418733517547\\
        };
        \addplot [color=mycolor3, line width=1pt, mark size=1.5pt, mark=*, mark options={solid, mycolor3}, forget plot]
          table[row sep=crcr]{%
        2	0.0341853297088356\\
        3	0.354825636460333\\
        4	1.93007325863185\\
        5	5.2866312044556\\
        6	14.3261301528964\\
        };
        \addplot [color=mycolor4, line width=1.5pt, mark size=2.5pt, dotted, mark=x, mark options={solid, mycolor4}, forget plot]
          table[row sep=crcr]{%
        2	0.0342837021430505\\
        3	0.267642581959773\\
        4	1.2103377789531\\
        5	2.9520555831536\\
        6	7.74257041631726\\
        };
        \end{axis}
        \end{tikzpicture}%
    \end{subfigure}\qquad\qquad\qquad
    \begin{subfigure}[b]{0.4\textwidth}
        \definecolor{mycolor1}{rgb}{0.00000,0.44700,0.74100}%
        \definecolor{mycolor2}{rgb}{0.85000,0.32500,0.09800}%
        \definecolor{mycolor3}{rgb}{0.92900,0.69400,0.12500}%
        \definecolor{mycolor4}{rgb}{0.49400,0.18400,0.55600}%
        \begin{tikzpicture}

        \begin{axis}[%
        width=\figurewidth,
        height=0.849\figureheight,
        at={(0\figurewidth,0\figureheight)},
        scale only axis,
        xmin=2,
        xmax=6,
        xtick={2, 3, 4, 5, 6},
        xlabel style={font=\color{white!15!black}},
        xlabel={Number of levels},
        ymode=log,
        ymin=0.001,
        ymax=10,
        yminorticks=true,
        yticklabels={,,},
        axis background/.style={fill=white},
        title style={font=\bfseries},
        title={L-stable SDIRK3}
        ]
        \addplot [color=mycolor1, line width=1pt, mark size=1.5pt, mark=*, mark options={solid, mycolor1}, forget plot]
          table[row sep=crcr]{%
        2	0.00323837021802482\\
        3	0.0306767589026215\\
        4	0.184671537558734\\
        5	0.705728012554802\\
        6	1.34261892340324\\
        };
        \addplot [color=mycolor2, line width=1pt, mark size=1.5pt, mark=*, mark options={solid, mycolor2}, forget plot]
          table[row sep=crcr]{%
        2	0.00645508811734074\\
        3	0.0734107737467412\\
        4	0.622366339997128\\
        5	2.59591843529659\\
        6	5.78381548842482\\
        };
        \addplot [color=mycolor3, line width=1pt, mark size=1.5pt, mark=*, mark options={solid, mycolor3}, forget plot]
          table[row sep=crcr]{%
        2	0.0101186545764888\\
        3	0.117412976139701\\
        4	0.954050838565406\\
        5	3.32075989425305\\
        6	6.67739771296237\\
        };
        \addplot [color=mycolor4, line width=1.5pt, mark size=2.5pt, dotted, mark=x, mark options={solid, mycolor4}, forget plot]
          table[row sep=crcr]{%
        2	0.0101481801423545\\
        3	0.088458362080275\\
        4	0.596409913745118\\
        5	1.81613403476017\\
        6	3.40515258321172\\
        };
        \end{axis}
        \end{tikzpicture}%
    \end{subfigure}\\[1ex]
    \setlength{\figurewidth}{0.4\linewidth}
    \setlength{\figureheight}{0.25\linewidth}
    \begin{subfigure}[b]{0.4\textwidth}
        \definecolor{mycolor1}{rgb}{0.00000,0.44700,0.74100}%
        \definecolor{mycolor2}{rgb}{0.85000,0.32500,0.09800}%
        \definecolor{mycolor3}{rgb}{0.92900,0.69400,0.12500}%
        \definecolor{mycolor4}{rgb}{0.49400,0.18400,0.55600}%
        \begin{tikzpicture}

        \begin{axis}[%
        width=\figurewidth,
        height=0.849\figureheight,
        at={(0\figurewidth,0\figureheight)},
        scale only axis,
        xmin=2,
        xmax=6,
        xtick={2, 3, 4, 5, 6},
        xlabel style={font=\color{white!15!black}},
        xlabel={Number of levels},
        ymode=log,
        ymin=1e-07,
        ymax=10,
        yminorticks=true,
        ylabel style={font=\color{white!15!black}},
        ylabel={Convergence factor},
        axis background/.style={fill=white},
        title style={font=\bfseries},
        title={A-stable SDIRK4}
        ]
        \addplot [color=mycolor1, line width=1pt, mark size=1.5pt, mark=*, mark options={solid, mycolor1}, forget plot]
          table[row sep=crcr]{%
        2	0.00473339681471841\\
        3	0.0496473619635444\\
        4	0.430149516221153\\
        5	0.963588686863819\\
        6	1.89448510357372\\
        };
        \addplot [color=mycolor2, line width=1pt, mark size=1.5pt, mark=*, mark options={solid, mycolor2}, forget plot]
          table[row sep=crcr]{%
        2	0.00727312389153252\\
        3	0.1394504667967\\
        4	1.16588157962976\\
        5	3.27561930133136\\
        6	7.90505053733317\\
        };
        \addplot [color=mycolor3, line width=1pt, mark size=1.5pt, mark=*, mark options={solid, mycolor3}, forget plot]
          table[row sep=crcr]{%
        2	0.0114075353317223\\
        3	0.219741898411467\\
        4	1.60928334200213\\
        5	3.72715994499783\\
        6	9.53557040503556\\
        };
        \addplot [color=mycolor4, line width=1.5pt, mark size=2.5pt, dotted, mark=x, mark options={solid, mycolor4}, forget plot]
          table[row sep=crcr]{%
        2	0.0114409240180685\\
        3	0.161758661007896\\
        4	0.981989167910772\\
        5	2.04295246135027\\
        6	5.12541331961503\\
        };
        \end{axis}
        \end{tikzpicture}%
    \end{subfigure}\qquad\qquad\qquad
    \begin{subfigure}[b]{0.4\textwidth}
        \definecolor{mycolor1}{rgb}{0.00000,0.44700,0.74100}%
        \definecolor{mycolor2}{rgb}{0.85000,0.32500,0.09800}%
        \definecolor{mycolor3}{rgb}{0.92900,0.69400,0.12500}%
        \definecolor{mycolor4}{rgb}{0.49400,0.18400,0.55600}%
        \begin{tikzpicture}

        \begin{axis}[%
        width=\figurewidth,
        height=0.849\figureheight,
        at={(0\figurewidth,0\figureheight)},
        scale only axis,
        xmin=2,
        xmax=6,
        xtick={2, 3, 4, 5, 6},
        xlabel style={font=\color{white!15!black}},
        xlabel={Number of levels},
        ymode=log,
        ymin=1e-07,
        ymax=10,
        yminorticks=true,
        yticklabels={,,},
        axis background/.style={fill=white},
        title style={font=\bfseries},
        title={L-stable SDIRK4}
        ]
        \addplot [color=mycolor1, line width=1pt, mark size=1.5pt, mark=*, mark options={solid, mycolor1}, forget plot]
          table[row sep=crcr]{%
        2	5.68527005677997e-07\\
        3	9.77911728432672e-06\\
        4	0.00462349723864827\\
        5	0.0663626090077182\\
        6	0.708013452181577\\
        };
        \addplot [color=mycolor2, line width=1pt, mark size=1.5pt, mark=*, mark options={solid, mycolor2}, forget plot]
          table[row sep=crcr]{%
        2	3.94737354375397e-05\\
        3	0.000916292932387609\\
        4	0.0203700306558133\\
        5	0.426149053815399\\
        6	6.65825730591849\\
        };
        \addplot [color=mycolor3, line width=1pt, mark size=1.5pt, mark=*, mark options={solid, mycolor3}, forget plot]
          table[row sep=crcr]{%
        2	6.19445610380176e-05\\
        3	0.0014589019032384\\
        4	0.032479789254962\\
        5	0.679755121901639\\
        6	10.6197919278232\\
        };
        \addplot [color=mycolor4, line width=1.5pt, mark size=2.5pt, dotted, mark=x, mark options={solid, mycolor4}, forget plot]
          table[row sep=crcr]{%
        2	6.21263647964648e-05\\
        3	0.00106978228605693\\
        4	0.0196460705547786\\
        5	0.362316645782796\\
        6	5.26753742222048\\
        };
        \end{axis}
        \end{tikzpicture}%
    \end{subfigure}
    \caption{Wave equation: The convergence factor of MGRIT with V-cycles and FCF-relaxation
        increases substantially with a growing number of time grid levels
        and eventually exceeds $1$. This means that in the worst case, MGRIT V-cycles yields
        a divergent algorithm, which is in line with observations for hyperbolic PDEs
        in the literature \cite{FarhatChandesris2003,
        FarhatCortial2006,HessenthalerNordslettenRoehrleSchroderFalgout2018}.}
    \label{wave-V-cycle-nt1024-FCF-relaxation-nn11x11-suppfig}
\end{figure}
\FloatBarrier
\FloatBarrier
\begin{figure}[h!]
    \centering
    \hspace{-1.75cm}
    \setlength{\figurewidth}{0.3\linewidth}
    \setlength{\figureheight}{0.25\linewidth}
    \begin{subfigure}[b]{0.4\textwidth}
        \hspace{0.75cm}
        \raisebox{1.25cm}{
            \definecolor{mycolor1}{rgb}{0.00000,0.44700,0.74100}%
            \definecolor{mycolor2}{rgb}{0.85000,0.32500,0.09800}%
            \definecolor{mycolor3}{rgb}{0.92900,0.69400,0.12500}%
            \definecolor{mycolor4}{rgb}{0.49400,0.18400,0.55600}%
            \definecolor{mycolor5}{rgb}{0.46600,0.67400,0.18800}%
            \begin{tikzpicture}

            \begin{axis}[%
            width=\figurewidth,
            height=\figureheight,
            at={(0\figurewidth,0\figureheight)},
            hide axis,
            unbounded coords=jump,
            xmin=2,
            xmax=6,
            ymin=0.1,
            ymax=1,
            axis background/.style={fill=white},
            legend style={at={(0.9,0.5)}, anchor=center, legend cell align=left, align=left, fill=none, draw=none}
            ]

            \addlegendimage{color=mycolor1, line width=1.2pt}, mark size=2.0pt, mark=*
            \addlegendimage{color=mycolor2, line width=1pt, mark size=1.5pt, mark=*}
            \addlegendimage{color=mycolor3, line width=1pt, mark size=1.5pt, mark=*}

            \addlegendentry{\footnotesize~$\max_i \|\mathbf{r}_{i+1}\|_2 / \|\mathbf{r}_i\|_2$}
            \addlegendentry{\footnotesize~$\| \mathcal{F}_{F}^{n_\ell,\Delta} \|_2$}
            \addlegendentry{\footnotesize~$\sqrt{ \| \mathcal{F}_{F}^{n_\ell,\Delta} \|_1 \| \mathcal{F}_{F}^{n_\ell,\Delta} \|_\infty }$}

            \end{axis}
            \end{tikzpicture}%

        }
    \end{subfigure}\qquad
    \setlength{\figurewidth}{0.4\linewidth}
    \setlength{\figureheight}{0.25\linewidth}
    \begin{subfigure}[b]{0.4\textwidth}
        \definecolor{mycolor1}{rgb}{0.00000,0.44700,0.74100}%
        \definecolor{mycolor2}{rgb}{0.85000,0.32500,0.09800}%
        \definecolor{mycolor3}{rgb}{0.92900,0.69400,0.12500}%
        \definecolor{mycolor4}{rgb}{0.49400,0.18400,0.55600}%
        \begin{tikzpicture}

        \begin{axis}[%
        width=\figurewidth,
        height=0.849\figureheight,
        at={(0\figurewidth,0\figureheight)},
        scale only axis,
        unbounded coords=jump,
        xmin=2,
        xmax=6,
        xtick={2, 3, 4, 5, 6},
        xlabel style={font=\color{white!15!black}},
        xlabel={Number of levels},
        ymode=log,
        ymin=0.1,
        ymax=10,
        yminorticks=true,
        ylabel style={font=\color{white!15!black}},
        ylabel={Convergence factor},
        axis background/.style={fill=white},
        title style={font=\bfseries},
        title={L-stable SDIRK1}
        ]
        \addplot [color=mycolor1, line width=1pt, mark size=1.5pt, mark=*, mark options={solid, mycolor1}, forget plot]
          table[row sep=crcr]{%
        2	0.427041546965819\\
        3	0.552316480717758\\
        4	0.687744520379039\\
        5	0.874386584120249\\
        6	1.02149172990219\\
        };
        \addplot [color=mycolor2, line width=1pt, mark size=1.5pt, mark=*, mark options={solid, mycolor2}, forget plot]
          table[row sep=crcr]{%
        2	0.468801233109855\\
        3	0.651520746808723\\
        4	0.931635205408301\\
        5	1.4015239272078\\
        6	2.16759636474516\\
        };
        \addplot [color=mycolor3, line width=1pt, mark size=1.5pt, mark=*, mark options={solid, mycolor3}, forget plot]
          table[row sep=crcr]{%
        2	0.499699153253868\\
        3	0.745866415018615\\
        4	1.14386861917292\\
        5	1.90626681135816\\
        6	3.23286611837149\\
        };
        \addplot [color=mycolor4, line width=1pt, mark size=1.5pt, mark=*, mark options={solid, mycolor4}, forget plot]
          table[row sep=crcr]{%
        2	nan\\
        3	nan\\
        4	nan\\
        5	nan\\
        6	nan\\
        };
        \end{axis}
        \end{tikzpicture}%
    \end{subfigure}\\[1ex]
    \setlength{\figurewidth}{0.4\linewidth}
    \setlength{\figureheight}{0.25\linewidth}
    \begin{subfigure}[b]{0.4\textwidth}
        \definecolor{mycolor1}{rgb}{0.00000,0.44700,0.74100}%
        \definecolor{mycolor2}{rgb}{0.85000,0.32500,0.09800}%
        \definecolor{mycolor3}{rgb}{0.92900,0.69400,0.12500}%
        \definecolor{mycolor4}{rgb}{0.49400,0.18400,0.55600}%
        \begin{tikzpicture}

        \begin{axis}[%
        width=\figurewidth,
        height=0.849\figureheight,
        at={(0\figurewidth,0\figureheight)},
        scale only axis,
        unbounded coords=jump,
        xmin=2,
        xmax=6,
        xtick={2, 3, 4, 5, 6},
        xlabel style={font=\color{white!15!black}},
        xlabel={Number of levels},
        ymode=log,
        ymin=0.01,
        ymax=10,
        yminorticks=true,
        ylabel style={font=\color{white!15!black}},
        ylabel={Convergence factor},
        axis background/.style={fill=white},
        title style={font=\bfseries},
        title={A-stable SDIRK2}
        ]
        \addplot [color=mycolor1, line width=1pt, mark size=1.5pt, mark=*, mark options={solid, mycolor1}, forget plot]
          table[row sep=crcr]{%
        2	0.0127833327193592\\
        3	0.0124222236699561\\
        4	0.0136967889095167\\
        5	0.0560292016989289\\
        6	1.27907789994245\\
        };
        \addplot [color=mycolor2, line width=1pt, mark size=1.5pt, mark=*, mark options={solid, mycolor2}, forget plot]
          table[row sep=crcr]{%
        2	0.026150104576929\\
        3	0.028899086810459\\
        4	0.0856350329567948\\
        5	3.82860468741843\\
        6	1016.0639297697\\
        };
        \addplot [color=mycolor3, line width=1pt, mark size=1.5pt, mark=*, mark options={solid, mycolor3}, forget plot]
          table[row sep=crcr]{%
        2	0.0410363975297065\\
        3	0.0497918592135254\\
        4	0.185343827756128\\
        5	7.97348834867009\\
        6	2079.55893291217\\
        };
        \addplot [color=mycolor4, line width=1pt, mark size=1.5pt, mark=*, mark options={solid, mycolor4}, forget plot]
          table[row sep=crcr]{%
        2	nan\\
        3	nan\\
        4	nan\\
        5	nan\\
        6	nan\\
        };
        \end{axis}
        \end{tikzpicture}%
    \end{subfigure}\qquad\qquad\qquad
    \begin{subfigure}[b]{0.4\textwidth}
        \definecolor{mycolor1}{rgb}{0.00000,0.44700,0.74100}%
        \definecolor{mycolor2}{rgb}{0.85000,0.32500,0.09800}%
        \definecolor{mycolor3}{rgb}{0.92900,0.69400,0.12500}%
        \definecolor{mycolor4}{rgb}{0.49400,0.18400,0.55600}%
        \begin{tikzpicture}

        \begin{axis}[%
        width=\figurewidth,
        height=0.849\figureheight,
        at={(0\figurewidth,0\figureheight)},
        scale only axis,
        unbounded coords=jump,
        xmin=2,
        xmax=6,
        xtick={2, 3, 4, 5, 6},
        xlabel style={font=\color{white!15!black}},
        xlabel={Number of levels},
        ymode=log,
        ymin=0.01,
        ymax=10,
        yminorticks=true,
        yticklabels={,,},
        axis background/.style={fill=white},
        title style={font=\bfseries},
        title={L-stable SDIRK2}
        ]
        \addplot [color=mycolor1, line width=1pt, mark size=1.5pt, mark=*, mark options={solid, mycolor1}, forget plot]
          table[row sep=crcr]{%
        2	0.0247939919863588\\
        3	0.0259242661268218\\
        4	0.0308708880704343\\
        5	0.477981772762115\\
        6	9.60477035533039\\
        };
        \addplot [color=mycolor2, line width=1pt, mark size=1.5pt, mark=*, mark options={solid, mycolor2}, forget plot]
          table[row sep=crcr]{%
        2	0.0506518870945083\\
        3	0.0689150136877586\\
        4	0.480280269268822\\
        5	25.5090788177879\\
        6	1377.22122170059\\
        };
        \addplot [color=mycolor3, line width=1pt, mark size=1.5pt, mark=*, mark options={solid, mycolor3}, forget plot]
          table[row sep=crcr]{%
        2	0.079473689509342\\
        3	0.127921875779653\\
        4	0.999992695319344\\
        5	51.5191919939072\\
        6	2712.76856900295\\
        };
        \addplot [color=mycolor4, line width=1pt, mark size=1.5pt, mark=*, mark options={solid, mycolor4}, forget plot]
          table[row sep=crcr]{%
        2	nan\\
        3	nan\\
        4	nan\\
        5	nan\\
        6	nan\\
        };
        \end{axis}
        \end{tikzpicture}%
    \end{subfigure}\\[1ex]
    \setlength{\figurewidth}{0.4\linewidth}
    \setlength{\figureheight}{0.25\linewidth}
    \begin{subfigure}[b]{0.4\textwidth}
        \definecolor{mycolor1}{rgb}{0.00000,0.44700,0.74100}%
        \definecolor{mycolor2}{rgb}{0.85000,0.32500,0.09800}%
        \definecolor{mycolor3}{rgb}{0.92900,0.69400,0.12500}%
        \definecolor{mycolor4}{rgb}{0.49400,0.18400,0.55600}%
        \begin{tikzpicture}

        \begin{axis}[%
        width=\figurewidth,
        height=0.851\figureheight,
        at={(0\figurewidth,0\figureheight)},
        scale only axis,
        unbounded coords=jump,
        xmin=2,
        xmax=6,
        xtick={2, 3, 4, 5, 6},
        xlabel style={font=\color{white!15!black}},
        xlabel={Number of levels},
        ymode=log,
        ymin=0.001,
        ymax=10,
        yminorticks=true,
        ylabel style={font=\color{white!15!black}},
        ylabel={Convergence factor},
        axis background/.style={fill=white},
        title style={font=\bfseries},
        title={A-stable SDIRK3}
        ]
        \addplot [color=mycolor1, line width=1pt, mark size=1.5pt, mark=*, mark options={solid, mycolor1}, forget plot]
          table[row sep=crcr]{%
        2	0.0108559985932353\\
        3	0.0161489075848049\\
        4	0.0779086101948694\\
        5	1.41237398166846\\
        6	39.2279000832822\\
        };
        \addplot [color=mycolor2, line width=1pt, mark size=1.5pt, mark=*, mark options={solid, mycolor2}, forget plot]
          table[row sep=crcr]{%
        2	0.021907406569714\\
        3	0.0398611902177169\\
        4	0.425876738629977\\
        5	13.6424722003456\\
        6	1082.90830690613\\
        };
        \addplot [color=mycolor3, line width=1pt, mark size=1.5pt, mark=*, mark options={solid, mycolor3}, forget plot]
          table[row sep=crcr]{%
        2	0.0342512610093387\\
        3	0.0736708121238084\\
        4	0.802689899758003\\
        5	23.1889832481197\\
        6	1923.05085949413\\
        };
        \addplot [color=mycolor4, line width=1pt, mark size=1.5pt, mark=*, mark options={solid, mycolor4}, forget plot]
          table[row sep=crcr]{%
        2	nan\\
        3	nan\\
        4	nan\\
        5	nan\\
        6	nan\\
        };
        \end{axis}
        \end{tikzpicture}%
    \end{subfigure}\qquad\qquad\qquad
    \begin{subfigure}[b]{0.4\textwidth}
        \definecolor{mycolor1}{rgb}{0.00000,0.44700,0.74100}%
        \definecolor{mycolor2}{rgb}{0.85000,0.32500,0.09800}%
        \definecolor{mycolor3}{rgb}{0.92900,0.69400,0.12500}%
        \definecolor{mycolor4}{rgb}{0.49400,0.18400,0.55600}%
        \begin{tikzpicture}

        \begin{axis}[%
        width=\figurewidth,
        height=0.849\figureheight,
        at={(0\figurewidth,0\figureheight)},
        scale only axis,
        unbounded coords=jump,
        xmin=2,
        xmax=6,
        xtick={2, 3, 4, 5, 6},
        xlabel style={font=\color{white!15!black}},
        xlabel={Number of levels},
        ymode=log,
        ymin=0.001,
        ymax=10,
        yminorticks=true,
        yticklabels={,,},
        axis background/.style={fill=white},
        title style={font=\bfseries},
        title={L-stable SDIRK3}
        ]
        \addplot [color=mycolor1, line width=1pt, mark size=1.5pt, mark=*, mark options={solid, mycolor1}, forget plot]
          table[row sep=crcr]{%
        2	0.00267532614256879\\
        3	0.00356285695887558\\
        4	0.00579998036153973\\
        5	0.14763966782276\\
        6	2.83231213033025\\
        };
        \addplot [color=mycolor2, line width=1pt, mark size=1.5pt, mark=*, mark options={solid, mycolor2}, forget plot]
          table[row sep=crcr]{%
        2	0.00646766728042624\\
        3	0.00814891828744455\\
        4	0.0405740869214743\\
        5	1.35719963896617\\
        6	33.4843328824972\\
        };
        \addplot [color=mycolor3, line width=1pt, mark size=1.5pt, mark=*, mark options={solid, mycolor3}, forget plot]
          table[row sep=crcr]{%
        2	0.0101383706114157\\
        3	0.0143802652129805\\
        4	0.0831328717706557\\
        5	2.50965441247269\\
        6	52.6395190016887\\
        };
        \addplot [color=mycolor4, line width=1pt, mark size=1.5pt, mark=*, mark options={solid, mycolor4}, forget plot]
          table[row sep=crcr]{%
        2	nan\\
        3	nan\\
        4	nan\\
        5	nan\\
        6	nan\\
        };
        \end{axis}
        \end{tikzpicture}%
    \end{subfigure}\\[1ex]
    \setlength{\figurewidth}{0.4\linewidth}
    \setlength{\figureheight}{0.25\linewidth}
    \begin{subfigure}[b]{0.4\textwidth}
        \definecolor{mycolor1}{rgb}{0.00000,0.44700,0.74100}%
        \definecolor{mycolor2}{rgb}{0.85000,0.32500,0.09800}%
        \definecolor{mycolor3}{rgb}{0.92900,0.69400,0.12500}%
        \definecolor{mycolor4}{rgb}{0.49400,0.18400,0.55600}%
        \begin{tikzpicture}

        \begin{axis}[%
        width=\figurewidth,
        height=0.849\figureheight,
        at={(0\figurewidth,0\figureheight)},
        scale only axis,
        unbounded coords=jump,
        xmin=2,
        xmax=6,
        xtick={2, 3, 4, 5, 6},
        xlabel style={font=\color{white!15!black}},
        xlabel={Number of levels},
        ymode=log,
        ymin=1e-07,
        ymax=10,
        yminorticks=true,
        ylabel style={font=\color{white!15!black}},
        ylabel={Convergence factor},
        axis background/.style={fill=white},
        title style={font=\bfseries},
        title={A-stable SDIRK4}
        ]
        \addplot [color=mycolor1, line width=1pt, mark size=1.5pt, mark=*, mark options={solid, mycolor1}, forget plot]
          table[row sep=crcr]{%
        2	0.00416116422867001\\
        3	0.00578722061337781\\
        4	0.0474549722307326\\
        5	0.85284134695853\\
        6	22.9665231410466\\
        };
        \addplot [color=mycolor2, line width=1pt, mark size=1.5pt, mark=*, mark options={solid, mycolor2}, forget plot]
          table[row sep=crcr]{%
        2	0.00728731413082763\\
        3	0.0149873634708708\\
        4	0.275022929920467\\
        5	4.96303794251344\\
        6	238.437190880878\\
        };
        \addplot [color=mycolor3, line width=1pt, mark size=1.5pt, mark=*, mark options={solid, mycolor3}, forget plot]
          table[row sep=crcr]{%
        2	0.0114298022668852\\
        3	0.0279622571620679\\
        4	0.503268528651206\\
        5	7.50948670358275\\
        6	374.175881672276\\
        };
        \addplot [color=mycolor4, line width=1pt, mark size=1.5pt, mark=*, mark options={solid, mycolor4}, forget plot]
          table[row sep=crcr]{%
        2	nan\\
        3	nan\\
        4	nan\\
        5	nan\\
        6	nan\\
        };
        \end{axis}
        \end{tikzpicture}%
    \end{subfigure}\qquad\qquad\qquad
    \begin{subfigure}[b]{0.4\textwidth}
        \definecolor{mycolor1}{rgb}{0.00000,0.44700,0.74100}%
        \definecolor{mycolor2}{rgb}{0.85000,0.32500,0.09800}%
        \definecolor{mycolor3}{rgb}{0.92900,0.69400,0.12500}%
        \definecolor{mycolor4}{rgb}{0.49400,0.18400,0.55600}%
        \begin{tikzpicture}

        \begin{axis}[%
        width=\figurewidth,
        height=0.849\figureheight,
        at={(0\figurewidth,0\figureheight)},
        scale only axis,
        unbounded coords=jump,
        xmin=2,
        xmax=6,
        xtick={2, 3, 4, 5, 6},
        xlabel style={font=\color{white!15!black}},
        xlabel={Number of levels},
        ymode=log,
        ymin=1e-07,
        ymax=10,
        yminorticks=true,
        yticklabels={,,},
        axis background/.style={fill=white},
        title style={font=\bfseries},
        title={L-stable SDIRK4}
        ]
        \addplot [color=mycolor1, line width=1pt, mark size=1.5pt, mark=*, mark options={solid, mycolor1}, forget plot]
          table[row sep=crcr]{%
        2	5.98107521206688e-07\\
        3	5.98169902075979e-07\\
        4	6.04573732683324e-07\\
        5	4.14473975789086e-06\\
        6	0.0028441846995894\\
        };
        \addplot [color=mycolor2, line width=1pt, mark size=1.5pt, mark=*, mark options={solid, mycolor2}, forget plot]
          table[row sep=crcr]{%
        2	3.9550907870132e-05\\
        3	3.95570861444827e-05\\
        4	3.99507545154069e-05\\
        5	0.000241189308926225\\
        6	2.10107450072211\\
        };
        \addplot [color=mycolor3, line width=1pt, mark size=1.5pt, mark=*, mark options={solid, mycolor3}, forget plot]
          table[row sep=crcr]{%
        2	6.20657832496286e-05\\
        3	6.20857062223685e-05\\
        4	6.40267515104835e-05\\
        5	0.0005662953483806\\
        6	4.15616840610289\\
        };
        \addplot [color=mycolor4, line width=1pt, mark size=1.5pt, mark=*, mark options={solid, mycolor4}, forget plot]
          table[row sep=crcr]{%
        2	nan\\
        3	nan\\
        4	nan\\
        5	nan\\
        6	nan\\
        };
        \end{axis}
        \end{tikzpicture}%
    \end{subfigure}
    \caption{Wave equation: Observed convergence and predicted upper bounds on convergence
        of MGRIT with F-cycles and F-relaxation shows very similar trends as for MGRIT
        with F-cycles and FCF-relaxation,
        see Figure \ref{wave-F-cycle-nt1024-FCF-relaxation-nn11x11-suppfig}.
        This shows that switching from F-relaxation to FCF-relaxation alone is not sufficient
        to yield a robust MGRIT algorithm for the wave equation (and likely, other hyperbolic
        PDEs).}
    \label{wave-F-cycle-nt1024-F-relaxation-nn11x11-suppfig}
\end{figure}
\FloatBarrier
\FloatBarrier
\begin{figure}[h!]
    \centering
    \hspace{-1.75cm}
    \setlength{\figurewidth}{0.3\linewidth}
    \setlength{\figureheight}{0.25\linewidth}
    \begin{subfigure}[b]{0.4\textwidth}
        \hspace{0.75cm}
        \raisebox{1.25cm}{
            \definecolor{mycolor1}{rgb}{0.00000,0.44700,0.74100}%
            \definecolor{mycolor2}{rgb}{0.85000,0.32500,0.09800}%
            \definecolor{mycolor3}{rgb}{0.92900,0.69400,0.12500}%
            \definecolor{mycolor4}{rgb}{0.49400,0.18400,0.55600}%
            \definecolor{mycolor5}{rgb}{0.46600,0.67400,0.18800}%
            \begin{tikzpicture}

            \begin{axis}[%
            width=\figurewidth,
            height=\figureheight,
            at={(0\figurewidth,0\figureheight)},
            hide axis,
            unbounded coords=jump,
            xmin=2,
            xmax=6,
            ymin=0.1,
            ymax=1,
            axis background/.style={fill=white},
            legend style={at={(0.9,0.5)}, anchor=center, legend cell align=left, align=left, fill=none, draw=none}
            ]

            \addlegendimage{color=mycolor1, line width=1pt, mark size=1.5pt, mark=*}
            \addlegendimage{color=mycolor2, line width=1pt, mark size=1.5pt, mark=*}
            \addlegendimage{color=mycolor3, line width=1pt, mark size=1.5pt, mark=*}

            \addlegendentry{\footnotesize~$\max_i \|\mathbf{r}_{i+1}\|_2 / \|\mathbf{r}_i\|_2$}
            \addlegendentry{\footnotesize~$\| \mathcal{F}_{FCF}^{n_\ell,\Delta} \|_2$}
            \addlegendentry{\footnotesize~$\sqrt{ \| \mathcal{F}_{FCF}^{n_\ell,\Delta} \|_1 \| \mathcal{F}_{FCF}^{n_\ell,\Delta} \|_\infty }$}

            \end{axis}
            \end{tikzpicture}%

        }
    \end{subfigure}\qquad
    \setlength{\figurewidth}{0.4\linewidth}
    \setlength{\figureheight}{0.25\linewidth}
    \begin{subfigure}[b]{0.4\textwidth}
        \definecolor{mycolor1}{rgb}{0.00000,0.44700,0.74100}%
        \definecolor{mycolor2}{rgb}{0.85000,0.32500,0.09800}%
        \definecolor{mycolor3}{rgb}{0.92900,0.69400,0.12500}%
        \definecolor{mycolor4}{rgb}{0.49400,0.18400,0.55600}%
        \begin{tikzpicture}

        \begin{axis}[%
        width=\figurewidth,
        height=0.849\figureheight,
        at={(0\figurewidth,0\figureheight)},
        scale only axis,
        unbounded coords=jump,
        xmin=2,
        xmax=6,
        xtick={2, 3, 4, 5, 6},
        xlabel style={font=\color{white!15!black}},
        xlabel={Number of levels},
        ymode=log,
        ymin=0.01,
        ymax=10,
        yminorticks=true,
        ylabel style={font=\color{white!15!black}},
        ylabel={Convergence factor},
        axis background/.style={fill=white},
        title style={font=\bfseries},
        title={L-stable SDIRK1}
        ]
        \addplot [color=mycolor1, line width=1pt, mark size=1.5pt, mark=*, mark options={solid, mycolor1}, forget plot]
          table[row sep=crcr]{%
        2	0.422707263086206\\
        3	0.539747136999146\\
        4	0.634461665803552\\
        5	0.682397487793319\\
        6	0.659073801612238\\
        };
        \addplot [color=mycolor2, line width=1pt, mark size=1.5pt, mark=*, mark options={solid, mycolor2}, forget plot]
          table[row sep=crcr]{%
        2	0.465239435266433\\
        3	0.634185343198594\\
        4	0.837573420995158\\
        5	1.0631100028354\\
        6	1.24930273317484\\
        };
        \addplot [color=mycolor3, line width=1pt, mark size=1.5pt, mark=*, mark options={solid, mycolor3}, forget plot]
          table[row sep=crcr]{%
        2	0.496077405867889\\
        3	0.731282007225176\\
        4	1.06384740887511\\
        5	1.53715486550355\\
        6	2.0953971574138\\
        };
        \addplot [color=mycolor4, line width=1pt, mark size=1.5pt, mark=*, mark options={solid, mycolor4}, forget plot]
          table[row sep=crcr]{%
        2	nan\\
        3	nan\\
        4	nan\\
        5	nan\\
        6	nan\\
        };
        \end{axis}
        \end{tikzpicture}%
    \end{subfigure}\\[1ex]
    \setlength{\figurewidth}{0.4\linewidth}
    \setlength{\figureheight}{0.25\linewidth}
    \begin{subfigure}[b]{0.4\textwidth}
        \definecolor{mycolor1}{rgb}{0.00000,0.44700,0.74100}%
        \definecolor{mycolor2}{rgb}{0.85000,0.32500,0.09800}%
        \definecolor{mycolor3}{rgb}{0.92900,0.69400,0.12500}%
        \definecolor{mycolor4}{rgb}{0.49400,0.18400,0.55600}%
        \begin{tikzpicture}

        \begin{axis}[%
        width=\figurewidth,
        height=0.849\figureheight,
        at={(0\figurewidth,0\figureheight)},
        scale only axis,
        unbounded coords=jump,
        xmin=2,
        xmax=6,
        xtick={2, 3, 4, 5, 6},
        xlabel style={font=\color{white!15!black}},
        xlabel={Number of levels},
        ymode=log,
        ymin=0.01,
        ymax=10,
        yminorticks=true,
        ylabel style={font=\color{white!15!black}},
        ylabel={Convergence factor},
        axis background/.style={fill=white},
        title style={font=\bfseries},
        title={A-stable SDIRK2}
        ]
        \addplot [color=mycolor1, line width=1pt, mark size=1.5pt, mark=*, mark options={solid, mycolor1}, forget plot]
          table[row sep=crcr]{%
        2	0.0126237948958803\\
        3	0.0125223995273946\\
        4	0.0138907722246968\\
        5	0.0173169651195102\\
        6	0.261363667588777\\
        };
        \addplot [color=mycolor2, line width=1pt, mark size=1.5pt, mark=*, mark options={solid, mycolor2}, forget plot]
          table[row sep=crcr]{%
        2	0.0260990800226155\\
        3	0.0287320602092014\\
        4	0.0795579489943677\\
        5	2.52506377273171\\
        6	213.94573099681\\
        };
        \addplot [color=mycolor3, line width=1pt, mark size=1.5pt, mark=*, mark options={solid, mycolor3}, forget plot]
          table[row sep=crcr]{%
        2	0.0409562483157829\\
        3	0.0494020068666622\\
        4	0.172720594494204\\
        5	5.34199571628685\\
        6	443.1092172607\\
        };
        \addplot [color=mycolor4, line width=1pt, mark size=1.5pt, mark=*, mark options={solid, mycolor4}, forget plot]
          table[row sep=crcr]{%
        2	nan\\
        3	nan\\
        4	nan\\
        5	nan\\
        6	nan\\
        };
        \end{axis}
        \end{tikzpicture}%
    \end{subfigure}\qquad\qquad\qquad
    \begin{subfigure}[b]{0.4\textwidth}
        \definecolor{mycolor1}{rgb}{0.00000,0.44700,0.74100}%
        \definecolor{mycolor2}{rgb}{0.85000,0.32500,0.09800}%
        \definecolor{mycolor3}{rgb}{0.92900,0.69400,0.12500}%
        \definecolor{mycolor4}{rgb}{0.49400,0.18400,0.55600}%
        \begin{tikzpicture}

        \begin{axis}[%
        width=\figurewidth,
        height=0.851\figureheight,
        at={(0\figurewidth,0\figureheight)},
        scale only axis,
        unbounded coords=jump,
        xmin=2,
        xmax=6,
        xtick={2, 3, 4, 5, 6},
        xlabel style={font=\color{white!15!black}},
        xlabel={Number of levels},
        ymode=log,
        ymin=0.01,
        ymax=10,
        yminorticks=true,
        yticklabels={,,},
        axis background/.style={fill=white},
        title style={font=\bfseries},
        title={L-stable SDIRK2}
        ]
        \addplot [color=mycolor1, line width=1pt, mark size=1.5pt, mark=*, mark options={solid, mycolor1}, forget plot]
          table[row sep=crcr]{%
        2	0.0244843517742587\\
        3	0.0269869081251273\\
        4	0.0301905224229283\\
        5	0.133896959791831\\
        6	1.02280578017734\\
        };
        \addplot [color=mycolor2, line width=1pt, mark size=1.5pt, mark=*, mark options={solid, mycolor2}, forget plot]
          table[row sep=crcr]{%
        2	0.0505531000941094\\
        3	0.0681299045743775\\
        4	0.436411623452794\\
        5	17.5611723921844\\
        6	479.767273137187\\
        };
        \addplot [color=mycolor3, line width=1pt, mark size=1.5pt, mark=*, mark options={solid, mycolor3}, forget plot]
          table[row sep=crcr]{%
        2	0.0793185638122101\\
        3	0.126352243452018\\
        4	0.914937020783199\\
        5	35.7980221535127\\
        6	971.134880663888\\
        };
        \addplot [color=mycolor4, line width=1pt, mark size=1.5pt, mark=*, mark options={solid, mycolor4}, forget plot]
          table[row sep=crcr]{%
        2	nan\\
        3	nan\\
        4	nan\\
        5	nan\\
        6	nan\\
        };
        \end{axis}
        \end{tikzpicture}%
    \end{subfigure}\\[1ex]
    \setlength{\figurewidth}{0.4\linewidth}
    \setlength{\figureheight}{0.25\linewidth}
    \begin{subfigure}[b]{0.4\textwidth}
        \definecolor{mycolor1}{rgb}{0.00000,0.44700,0.74100}%
        \definecolor{mycolor2}{rgb}{0.85000,0.32500,0.09800}%
        \definecolor{mycolor3}{rgb}{0.92900,0.69400,0.12500}%
        \definecolor{mycolor4}{rgb}{0.49400,0.18400,0.55600}%
        \begin{tikzpicture}

        \begin{axis}[%
        width=\figurewidth,
        height=0.851\figureheight,
        at={(0\figurewidth,0\figureheight)},
        scale only axis,
        unbounded coords=jump,
        xmin=2,
        xmax=6,
        xtick={2, 3, 4, 5, 6},
        xlabel style={font=\color{white!15!black}},
        xlabel={Number of levels},
        ymode=log,
        ymin=0.001,
        ymax=10,
        yminorticks=true,
        ylabel style={font=\color{white!15!black}},
        ylabel={Convergence factor},
        axis background/.style={fill=white},
        title style={font=\bfseries},
        title={A-stable SDIRK3}
        ]
        \addplot [color=mycolor1, line width=1pt, mark size=1.5pt, mark=*, mark options={solid, mycolor1}, forget plot]
          table[row sep=crcr]{%
        2	0.0120191052548178\\
        3	0.0165052986144696\\
        4	0.0444104372344145\\
        5	0.508156235704172\\
        6	0.862606008941354\\
        };
        \addplot [color=mycolor2, line width=1pt, mark size=1.5pt, mark=*, mark options={solid, mycolor2}, forget plot]
          table[row sep=crcr]{%
        2	0.0218651189923282\\
        3	0.0393255270184763\\
        4	0.395489713972101\\
        5	10.6034741261669\\
        6	263.983844330425\\
        };
        \addplot [color=mycolor3, line width=1pt, mark size=1.5pt, mark=*, mark options={solid, mycolor3}, forget plot]
          table[row sep=crcr]{%
        2	0.0341853297088354\\
        3	0.0727254688617068\\
        4	0.751249168207012\\
        5	18.6561048932775\\
        6	515.194321709269\\
        };
        \addplot [color=mycolor4, line width=1pt, mark size=1.5pt, mark=*, mark options={solid, mycolor4}, forget plot]
          table[row sep=crcr]{%
        2	nan\\
        3	nan\\
        4	nan\\
        5	nan\\
        6	nan\\
        };
        \end{axis}
        \end{tikzpicture}%
    \end{subfigure}\qquad\qquad\qquad
    \begin{subfigure}[b]{0.4\textwidth}
        \definecolor{mycolor1}{rgb}{0.00000,0.44700,0.74100}%
        \definecolor{mycolor2}{rgb}{0.85000,0.32500,0.09800}%
        \definecolor{mycolor3}{rgb}{0.92900,0.69400,0.12500}%
        \definecolor{mycolor4}{rgb}{0.49400,0.18400,0.55600}%
        \begin{tikzpicture}

        \begin{axis}[%
        width=\figurewidth,
        height=0.849\figureheight,
        at={(0\figurewidth,0\figureheight)},
        scale only axis,
        unbounded coords=jump,
        xmin=2,
        xmax=6,
        xtick={2, 3, 4, 5, 6},
        xlabel style={font=\color{white!15!black}},
        xlabel={Number of levels},
        ymode=log,
        ymin=0.001,
        ymax=10,
        yminorticks=true,
        yticklabels={,,},
        axis background/.style={fill=white},
        title style={font=\bfseries},
        title={L-stable SDIRK3}
        ]
        \addplot [color=mycolor1, line width=1pt, mark size=1.5pt, mark=*, mark options={solid, mycolor1}, forget plot]
          table[row sep=crcr]{%
        2	0.00323837021802482\\
        3	0.0039684847043296\\
        4	0.00528749545419481\\
        5	0.0672154739136417\\
        6	0.0737210978122094\\
        };
        \addplot [color=mycolor2, line width=1pt, mark size=1.5pt, mark=*, mark options={solid, mycolor2}, forget plot]
          table[row sep=crcr]{%
        2	0.00645508811734188\\
        3	0.00808701241725648\\
        4	0.0378390479386906\\
        5	1.06665561426293\\
        6	15.1743204420361\\
        };
        \addplot [color=mycolor3, line width=1pt, mark size=1.5pt, mark=*, mark options={solid, mycolor3}, forget plot]
          table[row sep=crcr]{%
        2	0.0101186545764903\\
        3	0.0142554459353865\\
        4	0.0779607239156913\\
        5	2.0242369238712\\
        6	27.6867508167403\\
        };
        \addplot [color=mycolor4, line width=1pt, mark size=1.5pt, mark=*, mark options={solid, mycolor4}, forget plot]
          table[row sep=crcr]{%
        2	nan\\
        3	nan\\
        4	nan\\
        5	nan\\
        6	nan\\
        };
        \end{axis}
        \end{tikzpicture}%
    \end{subfigure}\\[1ex]
    \setlength{\figurewidth}{0.4\linewidth}
    \setlength{\figureheight}{0.25\linewidth}
    \begin{subfigure}[b]{0.4\textwidth}
        \definecolor{mycolor1}{rgb}{0.00000,0.44700,0.74100}%
        \definecolor{mycolor2}{rgb}{0.85000,0.32500,0.09800}%
        \definecolor{mycolor3}{rgb}{0.92900,0.69400,0.12500}%
        \definecolor{mycolor4}{rgb}{0.49400,0.18400,0.55600}%
        \begin{tikzpicture}

        \begin{axis}[%
        width=\figurewidth,
        height=0.849\figureheight,
        at={(0\figurewidth,0\figureheight)},
        scale only axis,
        unbounded coords=jump,
        xmin=2,
        xmax=6,
        xtick={2, 3, 4, 5, 6},
        xlabel style={font=\color{white!15!black}},
        xlabel={Number of levels},
        ymode=log,
        ymin=1e-07,
        ymax=10,
        yminorticks=true,
        ylabel style={font=\color{white!15!black}},
        ylabel={Convergence factor},
        axis background/.style={fill=white},
        title style={font=\bfseries},
        title={A-stable SDIRK4}
        ]
        \addplot [color=mycolor1, line width=1pt, mark size=1.5pt, mark=*, mark options={solid, mycolor1}, forget plot]
          table[row sep=crcr]{%
        2	0.00473339681471841\\
        3	0.00570012359076608\\
        4	0.042275299622819\\
        5	0.393489733360033\\
        6	0.24164775242275\\
        };
        \addplot [color=mycolor2, line width=1pt, mark size=1.5pt, mark=*, mark options={solid, mycolor2}, forget plot]
          table[row sep=crcr]{%
        2	0.00727312389153332\\
        3	0.0147625314609372\\
        4	0.256344803985121\\
        5	4.07424782102752\\
        6	51.3717336100215\\
        };
        \addplot [color=mycolor3, line width=1pt, mark size=1.5pt, mark=*, mark options={solid, mycolor3}, forget plot]
          table[row sep=crcr]{%
        2	0.0114075353317239\\
        3	0.0275644643958277\\
        4	0.47238551855669\\
        5	6.49731471331314\\
        6	96.8873295153152\\
        };
        \addplot [color=mycolor4, line width=1pt, mark size=1.5pt, mark=*, mark options={solid, mycolor4}, forget plot]
          table[row sep=crcr]{%
        2	nan\\
        3	nan\\
        4	nan\\
        5	nan\\
        6	nan\\
        };
        \end{axis}
        \end{tikzpicture}%

    \end{subfigure}\qquad\qquad\qquad
    \begin{subfigure}[b]{0.4\textwidth}
        \definecolor{mycolor1}{rgb}{0.00000,0.44700,0.74100}%
        \definecolor{mycolor2}{rgb}{0.85000,0.32500,0.09800}%
        \definecolor{mycolor3}{rgb}{0.92900,0.69400,0.12500}%
        \definecolor{mycolor4}{rgb}{0.49400,0.18400,0.55600}%
        \begin{tikzpicture}

        \begin{axis}[%
        width=\figurewidth,
        height=0.849\figureheight,
        at={(0\figurewidth,0\figureheight)},
        scale only axis,
        unbounded coords=jump,
        xmin=2,
        xmax=6,
        xtick={2, 3, 4, 5, 6},
        xlabel style={font=\color{white!15!black}},
        xlabel={Number of levels},
        ymode=log,
        ymin=1e-07,
        ymax=10,
        yminorticks=true,
        yticklabels={,,},
        axis background/.style={fill=white},
        title style={font=\bfseries},
        title={L-stable SDIRK4}
        ]
        \addplot [color=mycolor1, line width=1pt, mark size=1.5pt, mark=*, mark options={solid, mycolor1}, forget plot]
          table[row sep=crcr]{%
        2	5.68527005677997e-07\\
        3	5.6842020787866e-07\\
        4	5.73358923892536e-07\\
        5	2.58410883628764e-06\\
        6	0.000243520560478492\\
        };
        \addplot [color=mycolor2, line width=1pt, mark size=1.5pt, mark=*, mark options={solid, mycolor2}, forget plot]
          table[row sep=crcr]{%
        2	3.94737354378682e-05\\
        3	3.94796994891262e-05\\
        4	3.9862557389292e-05\\
        5	0.000176618155009953\\
        6	0.378508156908287\\
        };
        \addplot [color=mycolor3, line width=1pt, mark size=1.5pt, mark=*, mark options={solid, mycolor3}, forget plot]
          table[row sep=crcr]{%
        2	6.19445610379228e-05\\
        3	6.19510424022491e-05\\
        4	6.37369536838193e-05\\
        5	0.000419439521747319\\
        6	0.777120455015235\\
        };
        \addplot [color=mycolor4, line width=1pt, mark size=1.5pt, mark=*, mark options={solid, mycolor4}, forget plot]
          table[row sep=crcr]{%
        2	nan\\
        3	nan\\
        4	nan\\
        5	nan\\
        6	nan\\
        };
        \end{axis}
        \end{tikzpicture}%
    \end{subfigure}
    \caption{Wave equation: The convergence of MGRIT with F-cycles and FCF-relaxation
        deteriorates with a larger number of time grid levels compared to MGRIT with V-cycles,
        see Figure \ref{wave-V-cycle-nt1024-FCF-relaxation-nn11x11-suppfig}.
        Generally, convergent algorithms are given for a larger range of time grid levels
        and observed convergence is better than the predictions from the upper bounds.
        This shows that the choice of F-cycles over V-cycles is one likely ingredient
        for future improvements of MGRIT for hyperbolic-type PDEs.}
    \label{wave-F-cycle-nt1024-FCF-relaxation-nn11x11-suppfig}
\end{figure}
\FloatBarrier

\end{document}